\documentclass[review]{article}
\usepackage{mathrsfs}
\usepackage{amsmath,amssymb,amsthm}
\pagestyle{plain}
\newtheorem{theorem}{Theorem}[section]
\newtheorem{lemma}[theorem]{Lemma}
\newtheorem{corollary}[theorem]{Corollary}
\newtheorem{definition}[theorem]{Definition}
\newtheorem{remark}[theorem]{Remark}
\usepackage[english]{babel}
\setlength{\topmargin}{0cm}
\setlength{\oddsidemargin}{0cm}
\setlength{\evensidemargin}{0cm}
\setlength{\textwidth}{16truecm}
\setlength{\textheight}{21truecm}
\numberwithin{equation}{section}

\title{A note on the $H^{s}$-critical inhomogeneous nonlinear Schr\"{o}dinger equation}
\author
{{\bf
JinMyong An,
JinMyong Kim
\footnote{Corresponding author: JinMyong Kim (jm.kim0211@ryongnamsan.edu.kp)}
}\\
\footnotesize{Faculty of Mathematics, {\bf Kim Il Sung} University, Pyongyang, Democratic People's Republic of Korea}\\
}

\date{}
\begin{document}
\maketitle
\begin{abstract}
In this paper, we consider the Cauchy problem for the $H^{s}$-critical
inhomogeneous nonlinear Schr\"{o}dinger (INLS) equation
\[iu_{t} +\Delta u=\lambda |x|^{-b} f(u),\;
u(0)=u_{0} \in H^{s} (\mathbb R^{n}),\]
where $n\in \mathbb N$, $0\le s<\frac{n}{2}$, $0<b<\min \left\{2,\;n-s,\; 1+\frac{n-2s}{2} \right\}$ and $f(u)$ is a nonlinear function that behaves like $\lambda |u|^{\sigma } u$ with $\lambda \in \mathbb C$ and $\sigma=\frac{4-2b}{n-2s}$.
First, we establish the  local well-posedness as well as the small data global well-posedness  in $H^{s}(\mathbb R^{n})$ for the $H^{s}$-critical INLS equation by using the contraction mapping principle based on the Strichartz estimates in Sobolev-Lorentz spaces.
Next, we obtain some standard continuous dependence results for the $H^{s}$-critical INLS equation.
Our results about the well-posedness and standard continuous dependence for the $H^{s}$-critical INLS equation improve the ones of Aloui-Tayachi [Discrete Contin. Dyn. Syst. 41 (11) (2021), 5409-5437] by extending the validity of $s$ and $b$.
Based on the local well-posedness in $H^{1}(\mathbb R^{n})$, we finally establish the blow-up criteria for $H^{1}$-solutions to the focusing energy-critical INLS equation. In particular, we prove the finite time blow-up for finite-variance, radially symmetric or cylindrically symmetric initial data.
\end{abstract}

\noindent {\bf Keywords}: Inhomogeneous nonlinear Schr\"{o}dinger equation; $H^{s}$-critical; Well-posedness; Blow-up\\

\noindent {\bf Mathematics Subject Classification (2020)}: 35Q55, 35A01, 35B30, 35B44, 46E35
\section{Introduction}

In this paper, we study the Cauchy problem for the inhomogeneous nonlinear Schr\"{o}dinger (INLS) equation
\begin{equation} \label{GrindEQ__1_1_}
\left\{\begin{array}{l} {iu_{t} +\Delta u=|x|^{-b} f{(u),}} \\ {u(0,\; x)=u_{0} (x),} \end{array}\right.
\end{equation}
where $u_{0} \in H^{s} (\mathbb R^{n})$,~$0\le s<\frac{n}{2}$, $0<b<\min \left\{2,\; n-s, \; 1+\frac{n-2s}{2}\right\}$ and $f$ is of class $X\left(\sigma,s,b\right)$ (see Definition \ref{defn 1.1.}).
\begin{definition}[\cite{AK211}]\label{defn 1.1.}
\textnormal{Let $f:\mathbb C\to \mathbb C$, $s\ge 0$, $\sigma >0$, $0\le b<2$ and let $\left\lceil s\right\rceil $ denote the minimal integer which is larger than or equal to $s$. For $k\in \mathbb N$, let the $k$-th order derivative of $f(z)$ be defined under the identification $\mathbb C=\mathbb R^{2}$ $($see Section 2$)$. Let us define
\begin{equation} \label{GrindEQ__1_2_}
\sigma _{s} =\left\{\begin{array}{l} {\frac{4-2b}{n-2s} ,\;0\le s<\frac{n}{2} ,} \\ {\infty ,\; s\ge \frac{n}{2} .} \end{array}\right.
\end{equation}
We say that $f$ is of class $X\left(\sigma,s,b\right)$ if it satisfies one of the following conditions:
\begin{itemize}
  \item $f(z)$ is a polynomial in $z$ and $\bar{z}$ satisfying $1<\deg (f)=1+\sigma\le 1+\sigma_{s}$.
  \item $\left\lceil s\right\rceil-1 < \sigma\le \sigma_{s}$ and $f\in C^{\max\left\{\left\lceil s\right\rceil, 1 \right\}} \left(\mathbb C\to \mathbb C\right)$ satisfies
      \begin{equation} \label{GrindEQ__1_3_}
        \left|f^{(k)}(z)\right|\lesssim|z|^{\sigma +1-k},
      \end{equation}
      for any $0\le k\le \max\left\{\left\lceil s\right\rceil, 1 \right\}$ and $z\in \mathbb C$.
\end{itemize}}
\end{definition}
\begin{remark}[\cite{AK211}]\label{rem 1.2.}
\textnormal{Let $s\ge 0$, $0<b<2$ and $0<\sigma \le \sigma_{s}$. If $\sigma $ is not an even integer, assume that $\sigma >\left\lceil s\right\rceil-1 $. Then we can easily verify that $f(u)=\lambda |u|^{\sigma } u$ with $\lambda \in \mathbb C$ is a model case of class $X\left(\sigma,s,b\right)$.}
\end{remark}
The case $b=0$ is the well-known classic nonlinear Schr\"{o}dinger (NLS) equation
which has been widely studied during the last three decades. See, for example, [2--7] and the references therein. On the other
hand, in the end of the last century, it was suggested that in some situations the
beam propagation can be modeled by the inhomogeneous nonlinear Schr\"{o}dinger
equation in the following form:
\begin{equation} \label{GrindEQ__1_4_}
iu_{t} +\Delta u=V\left(x\right)|u|^{\sigma } u.
\end{equation}
The potential $V(x)$ accounts for the inhomogeneity of the medium. We refer the reader to \cite{G00, LT94} for the physical background of \eqref{GrindEQ__1_4_}. The particular case $V(x)=|x|^{-b}$ appears naturally as a limiting case of potentials $V(x)$ that decays as $|x|^{-b}$ at infinity (see \cite{GS08}).

Before recalling the known results for \eqref{GrindEQ__1_1_}, let us recall some facts for this equation. The INLS equation \eqref{GrindEQ__1_1_} has
the following equivalent form:
\begin{equation} \label{GrindEQ__1_5_}
u(t)=S(t)u_{0} -i\lambda\int _{0}^{t}S\left(t-\tau
\right)|x|^{-b} f(u(\tau))d\tau  ,
\end{equation}
where $S(t)=e^{it\Delta } $ is the Schr\"{o}dinger semi-group. When $0\le s<\frac{n}{2} $, $\sigma _{s} $ given in \eqref{GrindEQ__1_2_} is said to be a $H^{s}$-critical power. If $s\ge 0$, $\sigma (<\sigma _{s})$ is said to be a $H^{s}$-subcritical power. See \cite{AK211} for example.
Especially, if $\sigma =\frac{4-2b}{n} $, the problem is known as $L^{2}$-critical or mass-critical. If $\sigma =\frac{4-2b}{n-2}$ with $n\ge3$, it is called $H^{1}$-critical or energy-critical. If $f(u)=\lambda |u|^{\sigma } u$ with $\lambda\in \mathbb R$, then the INLS equation \eqref{GrindEQ__1_1_} has the conserved mass and energy, defined respectively by
\begin{equation} \label{GrindEQ__1_6_}
M\left(u(t)\right):=\left\| u(t)\right\| _{L^{2}
(\mathbb R^{n})}^{2}=M\left(u_{0} \right),
\end{equation}
\begin{equation} \label{GrindEQ__1_7_}
E\left(u(t)\right):=\frac{1}{2} \left\|
 u(t)\right\| _{\dot{H}^{1} (\mathbb R^{n})}^{2} +\frac{\lambda
}{\sigma+2}\int_{\mathbb R^{n}}{|x|^{-b}\left|u\right|^{\sigma+2}dx}=E\left(u_{0} \right).
\end{equation}

The local and global well-posedness as well as blow-up and scattering in the energy space $H^{1} (\mathbb R^{n} )$ for \eqref{GrindEQ__1_1_} have been widely studied by many authors. See, for example, [10--23] and the references therein.

Meanwhile, the local and global well-posedness for the INLS equation \eqref{GrindEQ__1_1_} in the fractional Sobolev space $H^{s} (\mathbb R^{n} )$ have also been attracted a lot of interest in recent years. See [1, 24--28] and the references therein. The $H^{s}$-subcritical case was studied by \cite{AK211, G17, AKC21, AK212, AT21}.
Guzm\'{a}n \cite{G17} established the local and global well-posedness in $H^{s} (\mathbb R^{n} )$ with $0\le s\le \min \left\{1,\;\frac{n}{2} \right\}$ for the subcritical INLS equation \eqref{GrindEQ__1_1_} with $f(u)=\lambda |u|^{\sigma } u$. More precisely, he proved that:
\begin{itemize}
  \item if $0<\sigma <\frac{4-2b}{n}$, and $0<b<\min \{ 2,\;n\} $, then \eqref{GrindEQ__1_1_} is globally well-posed in $L^{2} (\mathbb R^{n} )$;
  \item  if $0<s\le \min \left\{1,\;\frac{n}{2} \right\}$, $0<b<\tilde{2}$ and $0<\sigma <\sigma _{s} $, then \eqref{GrindEQ__1_1_} is locally well-posed in $H^{s}(\mathbb R^{n})$;
  \item if $0<s\le \min \left\{1,\;\frac{n}{2} \right\}$, $0<b<\tilde{2}$ and $\frac{4-2b}{n} <\sigma <\sigma _{s} $, then \eqref{GrindEQ__1_1_} is globally well-posed in $H^{s}(\mathbb R^{n})$ for small initial data, where
\end{itemize}
\begin{equation} \nonumber
\tilde{2}=\left\{\begin{array}{l} {\frac{n}{3} ,\;n=1,\;2,\;3,} \\ {2,\;n\ge 4.} \end{array}\right.
\end{equation}
Later, the authors in \cite{AK211} improved the local well-posedness result of \cite{G17} by extending the validity of $s$ and $b$. More precisely, they obtained the following result. See also \cite{AKC21, AK212}.
\begin{theorem}[\cite{AK211}]\label{thm 1.3.}
Let $n\in \mathbb N$, $0\le s<\min \left\{n,\;\frac{n}{2} +{1}\right\}$, $0<b<\min \left\{2,\; n-s,\; 1+\frac{n-2s}{2} \right\}$ and $0<\sigma <\sigma _{s} $. Assume that $f$ is of class $X \left(\sigma,s,b\right)$. Then for any $u_{0} \in H^{s} (\mathbb R^{n})$, there exist $T_{\max } =T_{\max } \left(\left\| u_{0} \right\| _{H^{s} } \right)>0$, $T_{\min } =T_{\min } \left(\left\| u_{0} \right\| _{H^{s} } \right)>0$ and a unique, maximal solution of \eqref{GrindEQ__1_1_} satisfying
\begin{equation} \label{GrindEQ__1_8_}
C\left(\left(-T_{\min } ,\;T_{\max } \right),H^{s} \right)\cap L_{\rm loc}^{\gamma (p)} \left(\left(-T_{\min } ,\;T_{\max } \right),\;H_{p}^{s} (\mathbb R^{n})\right)
\end{equation}
for any admissible pair $(\gamma (p),\;p)$.
If $s>0$, then the solution of \eqref{GrindEQ__1_1_} depends continuously on the initial data $u_{0} $ in the following sense. There exists $0<T<T_{\max } ,\, T_{\min } $ such that if $u_{0}^{m} \to u_{0} $ in $H^{s} (\mathbb R^{n})$ and if $u_{m} $ denotes the solution of \eqref{GrindEQ__1_1_} with the initial data $u_{0}^{m} $, then $0<T<T_{\max } \left(u_{0}^{m} \right),\, T_{\min } \left(u_{0}^{m} \right)$ for all sufficiently large $m$ and $u_{m} \to u$ in $L^{\gamma (p)} \left(\left[-T,\;T\right],\;H_{p}^{s-\varepsilon } (\mathbb R^{n})\right)$ as $m\to \infty $ for all $\varepsilon >0$ and all admissible pair $\left(\gamma (p),\, p\right)$. In particular, $u_{m} \to u$ in $C\left(\left[-T,\;T\right],\; H^{s-\varepsilon } \right)$ for all $\varepsilon >0$.
\end{theorem}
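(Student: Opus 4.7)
My plan is to convert the Cauchy problem to its Duhamel form \eqref{GrindEQ__1_5_} and apply the Banach fixed-point theorem to the map
\[
\Phi(u)(t)=S(t)u_{0}-i\int_{0}^{t}S(t-\tau)\,|x|^{-b}f(u(\tau))\,d\tau
\]
in a ball of a suitably chosen Strichartz space on a small interval $I=[-T,T]$. Since $|x|^{-b}$ lies only in the weak Lebesgue space $L^{n/b,\infty}(\mathbb R^{n})$ and not in any strong $L^{r}$, the natural framework is that of Sobolev--Lorentz Strichartz spaces. Concretely, I would pick an admissible pair $(\gamma(q),q)$ adapted to the scaling of the power $\sigma$ and work on
\[
X(I)=\bigl\{u\in C(I;H^{s}(\mathbb R^{n})):\;(1+(-\Delta)^{s/2})u\in L^{\gamma(q)}(I;L^{q,2}(\mathbb R^{n}))\bigr\},
\]
with its obvious norm. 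The homogeneous Strichartz inequality in Lorentz variables together with a dual Strichartz inequality for the Duhamel integral then yields $\|\Phi(u)\|_{X(I)}\lesssim\|u_{0}\|_{H^{s}}+\||x|^{-b}f(u)\|_{N(I)}$ for the appropriate dual-admissible Sobolev--Lorentz norm $N(I)$. The contraction itself would be run in the weaker metric $d(u,v)=\|u-v\|_{L^{\gamma(q)}(I;L^{q,2})}$, in order to avoid placing a full $H^{s}$ norm on a difference of two nonlinearities whose dependence on $u$ is only H\"{o}lder.

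The heart of the proof is the nonlinear estimate
\[
\||x|^{-b}f(u)\|_{N(I)}\lesssim T^{\theta}\,\|u\|_{X(I)}^{\sigma+1},\qquad\theta>0.
\]
To establish it, I would commute $(-\Delta)^{s/2}$ inside the product via a fractional Leibniz rule and treat the two pieces separately. Generalised H\"{o}lder in Lorentz scales absorbs the singular weights, using $|x|^{-b}\in L^{n/b,\infty}$ and, after a cutoff into $|x|\le 1$ and $|x|>1$, the derived weight of order $b+s$; Sobolev embedding transfers the remaining regularity onto the factor $(-\Delta)^{s/2}f(u)$, which is controlled by a fractional chain rule using the pointwise bounds $|f^{(k)}(z)|\lesssim|z|^{\sigma+1-k}$ built into the class $X(\sigma,s,b)$. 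The lower bound $\sigma>\lceil s\rceil-1$ is precisely what the fractional chain rule needs for the H\"{o}lder continuity of the top derivative when $s$ is not an integer, while the subcritical gap $\sigma<\sigma_{s}$ produces the positive power $T^{\theta}$ through the time H\"{o}lder inequality (the polynomial case is easier and handled separately). With this estimate in hand, for $T$ depending only on $\|u_{0}\|_{H^{s}}$, $\Phi$ leaves a ball of $X(I)$ invariant and is a contraction for $d$; uniqueness in the full class, the blow-up alternative giving $T_{\max}$ and $T_{\min}$, and the extension of \eqref{GrindEQ__1_8_} to every admissible pair $(\gamma(p),p)$ then follow from a second application of the Strichartz inequality to the fixed point.

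For the continuous dependence statement, I observe that the fixed-point radius depends monotonically on $\|u_{0}\|_{H^{s}}$, so if $u_{0}^{m}\to u_{0}$ in $H^{s}$ then each $u_{m}$ exists on a common interval $[-T,T]$ with $\|u_{m}\|_{X([-T,T])}$ uniformly bounded. Writing the Duhamel equation for $u_{m}-u$ and re-running the nonlinear estimate at the weaker level $H^{s-\varepsilon}$ --- where the difference is controlled via the pointwise H\"{o}lder bound $|f(u)-f(v)|\lesssim(|u|+|v|)^{\sigma}|u-v|$ rather than by differentiating $f$ to order $\lceil s\rceil$ --- one absorbs a small term on the left to conclude $\|u_{m}-u\|_{L^{\gamma(p)}([-T,T];H_{p}^{s-\varepsilon})}\to 0$ for every admissible pair and every $\varepsilon>0$. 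The main obstacle throughout is the nonlinear estimate in the second paragraph: covering the full range $0<b<\min\{2,n-s,1+(n-2s)/2\}$ forces one to choose the Lorentz exponents precisely so that $|x|^{-b}$ stays in \emph{weak} $L^{n/b}$ rather than being pushed into strong $L^{n/b}$ (which fails at one of the endpoints), and simultaneously to handle the fractional chain rule under only the regularity $f\in C^{\max\{\lceil s\rceil,1\}}$ provided by the class $X(\sigma,s,b)$.
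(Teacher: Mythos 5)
You should first be aware that the paper contains no proof of Theorem \ref{thm 1.3.}: it is quoted verbatim from \cite{AK211} as background, and the only well-posedness argument actually carried out here is the proof of Theorem \ref{thm 1.7.} in Section 3, which treats the \emph{critical} power $\sigma=\sigma_{s}$. With that caveat, your outline is the same Kato fixed-point strategy the paper uses: Duhamel formulation, a ball in a Strichartz-type space $L^{\gamma(r)}(I;H^{s}_{r,2})$, contraction in the weaker metric $d(u,v)=\|u-v\|_{L^{\gamma(r)}(I;L^{r,2})}$ (exactly the space $(D,d)$ of Section 3), nonlinear estimates obtained by combining a fractional Leibniz/chain rule with H\"older in Lorentz exponents so that $|x|^{-b}$ and $I^{s}(|x|^{-b})=C|x|^{-b-s}$ are only ever measured in weak Lebesgue norms (Lemmas \ref{lem 3.1.} and \ref{lem 3.3.}, Remark \ref{rem 3.2.}), and the role of $\sigma>\lceil s\rceil-1$ in the chain rule. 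You correctly identify the one structural difference between the subcritical and critical settings: in the subcritical case the contraction constant carries a factor $T^{\theta}$ with $\theta>0$, so $T$ depends only on $\|u_{0}\|_{H^{s}}$, whereas in Theorem \ref{thm 1.7.} smallness comes from $\|S(t)u_{0}\|_{L^{\gamma(r)}([-T,T];H^{s}_{r,2})}\to 0$. (Minor point: once you work in Lorentz spaces the cutoff into $|x|\le 1$ and $|x|>1$ is unnecessary; that splitting belongs to the standard-Sobolev-space version of the argument in \cite{G17,AK211}, and mixing the two is redundant but harmless.)

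The one step of your proposal that does not work as written is the continuous dependence with $\varepsilon$-loss. You propose to ``re-run the nonlinear estimate at the weaker level $H^{s-\varepsilon}$'' controlling the difference by the pointwise bound $|f(u)-f(v)|\lesssim(|u|+|v|)^{\sigma}|u-v|$. That pointwise bound controls only Lebesgue (i.e.\ zero-derivative) norms of $f(u_{m})-f(u)$; it gives no bound on $\|f(u_{m})-f(u)\|_{\dot H^{s-\varepsilon}_{p}}$ for $s-\varepsilon>0$, and a genuine difference estimate at positive regularity is precisely what is \emph{not} available for merely H\"older-continuous top derivatives (this is why the theorem loses $\varepsilon$ at all, and why the standard-continuous-dependence results, Theorems \ref{thm 1.4.} and \ref{thm 1.10.}, need the extra structural hypotheses of Definition \ref{defn 1.5.}). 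The correct mechanism is: the contraction-metric estimate gives $u_{m}\to u$ in $L^{\gamma(p)}(I;L^{p})$ (regularity zero), the fixed-point construction gives uniform bounds on $u_{m},u$ in $L^{\gamma(p)}(I;H^{s}_{p})$, and one interpolates between these two pieces of information to obtain convergence in $L^{\gamma(p)}(I;H^{s-\varepsilon}_{p})$ for every $\varepsilon>0$. With that substitution your outline matches the intended argument.
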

But the above result about the continuous dependence in $H^{s}$ with $s>0$ is weaker than what would be ``standard", i.e. $\varepsilon =0$. The standard continuous dependence in $H^{s}$ with $s>0$ for the subcritical INLS equation \eqref{GrindEQ__1_1_} was studied in \cite{AKC21}.
More precisely they obtained the following result.
\begin{theorem}[\cite{AKC21}]\label{thm 1.4.}
Let $n\in \mathbb N$, $0<s<\min \left\{n,\;1+\frac{n}{2} \right\}$, $0<b<\min \left\{2,\; n-s,\; 1+\frac{n-2s}{2} \right\}$ and $0<\sigma <\sigma _{s} $.
Assume that $f$ is of class $C\left(\sigma ,s,b\right)$ (see Definition \ref{defn 1.5.}). Then for any given $u_{0} \in H^{s} (\mathbb R^{n})$, the corresponding solution $u$ of the INLS equation \eqref{GrindEQ__1_1_} in Theorem \ref{thm 1.3.} depends continuously on the initial data $u_{0} $ in the following sense.
For any interval $\left[-S,\, T\right]\subset \left(-T_{\min } (u_{0}),\;T_{\max } (u_{0})\right)$, and every admissible pair $\left(\gamma (p),\;p\right)$, if $u_{0}^{m} \to u_{0} $ in $H^{s} (\mathbb R^{n})$ and if $u_{m} $ denotes the solution of \eqref{GrindEQ__1_1_} with the initial data $u_{0}^{m} $, then $u_{m} \to u$ in $L^{\gamma (p)} (\left[-S,\;T\right],\;H_{p}^{s} (\mathbb R^{n} ))$ as $m\to \infty $.
In particular, $u_{m} \to u$ in $C\left(\left[-S,\;T\right],\;H^{s} \right)$.
In addition, if $f(u)$ is a polynomial in $u$ and $\bar{u}$, or if $f(u)$ is not a polynomial and $\sigma \ge \left\lceil s\right\rceil $, then the dependence is locally Lipschitz.
\end{theorem}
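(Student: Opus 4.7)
The plan is to run a contraction-style continuous dependence argument at the $H^s$-level on sufficiently small time subintervals and then patch the pieces together over the prescribed compact interval $[-S,T]\subset(-T_{\min}(u_0),T_{\max}(u_0))$. The work is done on the integral formulation \eqref{GrindEQ__1_5_} using the Strichartz estimates in Sobolev--Lorentz spaces that underlie the paper's framework. Two preparatory reductions are essentially free: by Theorem \ref{thm 1.3.} the reference solution $u$ lies in $L^{\gamma(p)}([-S,T],H_p^s(\mathbb R^n))$ for every admissible pair, and the approximating solutions $u_m$ corresponding to $u_0^m$ can be continued to all of $[-S,T]$ once $m$ is large, by covering $[-S,T]$ with a finite family of subintervals of length smaller than the local existence time at interior data points and invoking Theorem \ref{thm 1.3.} successively.

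The heart of the argument is a nonlinear estimate of the schematic form
$$
\bigl\||x|^{-b}(f(u_m)-f(u))\bigr\|_{\mathcal N^s(I)}\lesssim \bigl(\|u\|_{\mathcal S^s(I)}^\sigma+\|u_m\|_{\mathcal S^s(I)}^\sigma\bigr)\|u_m-u\|_{\mathcal S^s(I)}+R_m(I),
$$
where $\mathcal S^s(I)$ denotes a suitable Sobolev--Lorentz Strichartz space on $I$, $\mathcal N^s(I)$ its dual, and $R_m(I)\to 0$ as $m\to\infty$ for each fixed $I$. The singular weight is absorbed by treating $|x|^{-b}$ as an element of $L^{n/b,\infty}$ and applying H\"older's inequality in Lorentz spaces, which is precisely why the Sobolev--Lorentz framework is needed. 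Granting this estimate, a Strichartz inequality applied to $u_m-u$ gives
$$
\|u_m-u\|_{\mathcal S^s(I)}\lesssim \|u_0^m-u_0\|_{H^s}+\bigl(\|u\|_{\mathcal S^s(I)}^\sigma+\|u_m\|_{\mathcal S^s(I)}^\sigma\bigr)\|u_m-u\|_{\mathcal S^s(I)}+R_m(I),
$$
and on subintervals where $\|u\|_{\mathcal S^s(I)}$ is small the quadratic term is absorbed. Iterating along a fixed finite subcritical partition of $[-S,T]$, propagating the $H^s$-convergence at the transition times, delivers $u_m\to u$ in $L^{\gamma(p)}([-S,T],H_p^s)$ for every admissible pair, and in particular in $C([-S,T],H^s)$.

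The main obstacle is the derivation of the displayed nonlinear estimate when $f$ is not a polynomial and $\sigma<\lceil s\rceil$, because in that regime $f^{(\lceil s\rceil)}$ is only H\"older of exponent $\sigma-\lceil s\rceil+1<1$ and a naive fractional Leibniz/chain rule applied to $f(u_m)-f(u)$ loses derivatives. My plan to close it is to write
$$
f(u_m)-f(u)=(u_m-u)\int_0^1\partial_z f\bigl(u+\theta(u_m-u)\bigr)\,d\theta+\overline{(u_m-u)}\int_0^1\partial_{\bar z}f\bigl(u+\theta(u_m-u)\bigr)\,d\theta,
$$
distribute the $s$ derivatives with the Lorentz-space Kato--Ponce inequality, and observe that when all derivatives land on $(u_m-u)$ one obtains the absorbable linear term. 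The remaining ``bad'' pieces, where some derivatives fall on the integrand, are controlled by combining the H\"older regularity of $f^{(\lceil s\rceil)}$ built into class $C(\sigma,s,b)$ with the weaker convergence $u_m\to u$ in $L^{\gamma(p)}H_p^{s-\varepsilon}$ supplied by Theorem \ref{thm 1.3.} and interpolation with the uniform $H^s$-boundedness of $\{u_m\}$; these contributions comprise $R_m(I)$ and vanish as $m\to\infty$.

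For the locally Lipschitz statement, if $f$ is a polynomial in $z,\bar z$ then $f(u_m)-f(u)$ factors algebraically as $(u_m-u)$ times a polynomial in $u,u_m,\bar u,\bar u_m$, so the nonlinear estimate closes with $R_m(I)\equiv 0$ and the contraction argument furnishes a direct Lipschitz bound on each subinterval that patches to a Lipschitz bound on $[-S,T]$. The same conclusion holds when $\sigma\ge\lceil s\rceil$, because then $f^{(\lceil s\rceil)}$ is Lipschitz and the fractional chain rule applied to $f(u_m)-f(u)$ closes at the $H^s$-level without recourse to the $H^{s-\varepsilon}$ detour.
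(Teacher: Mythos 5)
You should first note that the paper does not prove Theorem \ref{thm 1.4.} at all: it is quoted verbatim from \cite{AKC21} as background for the subcritical problem, so there is no in-paper proof to compare against line by line. The closest internal analogue is the proof of Theorem \ref{thm 1.10.} in Section 4, and your proposal is essentially that argument transplanted to the subcritical setting: an estimate for $|x|^{-b}\bigl(f(u_m)-f(u)\bigr)$ in dual Strichartz--Sobolev--Lorentz norms (the role played by Lemma \ref{lem 4.4.}), with $|x|^{-b}$ absorbed via $L^{n/b,\infty}$ and Lorentz--H\"older, followed by Strichartz, absorption of the term with coefficient $\|u\|^{\sigma}+\|u_m\|^{\sigma}$ as in \eqref{GrindEQ__4_47_}, and a finite patching over $[-S,T]$ (the paper's ``standard compact argument'' citing \cite{DYC13}). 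In the Lipschitz regime --- $f$ polynomial or $\sigma\ge\lceil s\rceil$ --- your argument closes with $R_m\equiv 0$ and yields the locally Lipschitz dependence, exactly as Lemmas \ref{lem 4.2.}--\ref{lem 4.3.} do for the critical case. The one place where you go beyond what the paper itself carries out is the H\"older-only regime $\lceil s\rceil-1<\sigma<\lceil s\rceil$, which Theorem \ref{thm 1.4.} covers but which the paper deliberately excludes from Theorem \ref{thm 1.10.} (see Remark \ref{rem 1.12.}); there your $R_m(I)$ device --- exploiting the H\"older bound \eqref{GrindEQ__1_9_} of class $C(\sigma,s,b)$ on the top derivative together with the weaker $H^{s-\varepsilon}$ convergence from Theorem \ref{thm 1.3.}, interpolated against uniform $H^s$ Strichartz bounds --- is precisely the Cazenave--Fang--Han/Dai--Yang--Cao mechanism used in \cite{AKC21}, so the strategy is the right one.

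Two points deserve more care than your sketch gives them. First, when fractional derivatives fall on $\int_0^1 f'\bigl(u+\theta(u_m-u)\bigr)\,d\theta$ in the regime $\sigma<\lceil s\rceil$, the fractional chain rule is unavailable at the top order (it would require $f^{(\lceil s\rceil+1)}$); one must instead estimate the $v$-order seminorm of $f^{(\lceil s\rceil)}(w)$ directly through the H\"older condition \eqref{GrindEQ__1_9_} and a difference characterization of the fractional norm, which is where the loss producing only a H\"older (not Lipschitz) modulus, and hence the need for $R_m\to 0$ rather than a contraction, actually originates. Second, the $H^{s-\varepsilon}$ convergence supplied by Theorem \ref{thm 1.3.} is only asserted on a small interval $[-T,T]$, so it must itself be propagated across your partition of $[-S,T]$ before it can be fed into $R_m(I)$ on later subintervals; your transition-time bookkeeping covers this, but it should be stated as part of the induction. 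Neither point is a gap in the plan, only in the level of detail.
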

\begin{definition}[\cite{AKC21}]\label{defn 1.5.}
\textnormal{Let $f:\mathbb C\to \mathbb C$, $s>0$, $\sigma>0$, $0\le b<2$.
We say that $f$ is of class $C\left(\sigma ,s,b\right)$ if it satisfies one of the following conditions:}
\begin{itemize}
\item  \textnormal{$f\left(z\right)$ is a polynomial in $z$ and $\bar{z}$ satisfying $1<\deg (f)=1+\sigma< 1+\sigma_{s}$.}

\item  \textnormal{$\left\lceil s\right\rceil -1< \sigma<\sigma_{s}$ and $f\in C^{\left\lceil s\right\rceil } \left(\mathbb C\to \mathbb C\right)$ satisfies \eqref{GrindEQ__1_3_}
  for any $0\le k\le \left\lceil s\right\rceil $ and $u\in \mathbb C$. Furthermore,
  \begin{equation} \label{GrindEQ__1_9_}
  \left|f^{\left(\left\lceil s\right\rceil\right)} (u)-f^{\left(\left\lceil s\right\rceil\right)} (v)\right|\lesssim|u-v|^{\min \{ \sigma -\left\lceil s\right\rceil +1,\;1\} } \left(|u|+|v|\right)^{\max \{ 0,\;\sigma -\left\lceil s\right\rceil \} } ,
  \end{equation}
  for any $u,\;v\in \mathbb C$. If $s<1$, assume further that $\sigma >1$ and $f\in C^{2} \left(\mathbb C\to \mathbb C\right)$ satisfies
  \begin{equation} \label{GrindEQ__1_10_}
  \left|f''(u)\right|\lesssim|u|^{\sigma -1},
  \end{equation}
  for any $u\in \mathbb C$.}
\end{itemize}
\end{definition}
\begin{remark}\label{rem 1.6.}
\textnormal{Let $s>0$, $0\le b<2$ and $0<\sigma <\sigma _{s} $. If $\sigma $ is not an even integer, assume that $\sigma>\left\lceil s\right\rceil-1 $. If $s<1$, suppose further that $\sigma > 1$. Then one can verify that $f(u)=\lambda |u|^{\sigma } u$ with $\lambda \in \mathbb C$ is a model case of class $C\left(\sigma ,s,b\right)$. See also \cite{CFH11, DYC13} for example.}
\end{remark}
Recently, the authors in \cite{AK212} prove that the subcritical INLS equation \eqref{GrindEQ__1_1_} is globally well-posed in $H^{s}(\mathbb R^{n})$ if $0<s<\min \left\{\frac{n}{2} +1,\; n\right\}$, $0<b<\min \left\{2,\; n-s,\; 1+\frac{n-2s}{2} \right\}$, $\frac{4-2b}{n}<\sigma <\sigma _{s}$ and the initial data is sufficiently small.

The local and global well-posedness in $H^{s}$ for the $H^{s}$-critical INLS equation \eqref{GrindEQ__1_1_} has also been studied by several authors. See, for example, \cite{AK213,AT21} and the references therein.
The authors in \cite{AK213} proved that the critical INLS equation \eqref{GrindEQ__1_1_} with $n\ge 3$, $1\le s<\frac{n}{2}$, $0<b<1+\frac{n-2s}{2}$ and $\sigma =\frac{4-2b}{n-2s}$ is locally well-posed in $H^{s}(\mathbb R^{n})$ if one of the following assumptions is satisfied:
\begin{itemize}
  \item $s\in \mathbb N$, $n\ge 3$ and $b<\frac{4s}{n} $,
  \item $s\notin \mathbb N$, $n\ge 4$ and $b<\frac{6s}{n} -1$,
  \item $s\notin \mathbb N$, $n=3$ and $b<1$.
\end{itemize}
However, the above conditions on $b$ is too restrictive in high dimension. Recently,  Aloui-Tayachi \cite{AT21} used the Strichartz estimates in Sobolev-Lorentz spaces to prove that \eqref{GrindEQ__1_1_} is locally well-posed in $H^{s}$ if $n\in \mathbb N$, $0\le s\le 1$, $s<\frac{n}{2}$, $0<b<\min\{2,\;n-2s\}$ and $0<\sigma\le\frac{4-2b}{n-2s}$. They also proved that the above solution depends continuously on the initial data if one of the following conditions is further satisfied:
\begin{itemize}
  \item $s=0$, $s>0$ and $\sigma>1$, or $\sigma=s=1$,
  \item $s=1$ and $\sigma<\min\{\frac{4-2b}{n-2},\;1\}$,
  \item $s=1$ and $\sigma=\frac{4-2b}{n-2}<1$.
\end{itemize}
Furthermore, they obtained the unconditional uniqueness result in the following cases:
\begin{itemize}
  \item $n\ge 1$, $s\ge \frac{n}{2}$, $0<b<\min\{2,\;n\}$ and $\sigma>0$,
  \item $n\le 2$, $0\le s<\frac{n}{2}$, $0<b<\frac{n+2s}{2}$ and $0<\sigma<\frac{n+2s-2b}{n-2s}$,
  \item $n\ge 3$, $0\le s<\frac{n}{2}$, $0<b<\min\{2,\;1+2s-\frac{2s}{n}\}$ and $0<\sigma<\min\{\frac{2+4s-2b-4s/n}{n-2s},\;\frac{4-2b}{n-2s}\}$,
  \item $n\ge3$, $1\le s<\frac{n}{2}$, $0<b<2$ and $\sigma=\frac{4-2b}{n-2s}$.
\end{itemize}

Inspired by \cite{AT21}, this paper mainly investigates the local well-posedness and standard continuous dependence in $H^{s}$ with $0\le s<\frac{n}{2}$ for the $H^{s}$-critical INLS equation \eqref{GrindEQ__1_1_} by using the contraction mapping principle combined with the Strichartz estimates in Sobolev-Lorentz spaces. Our main results (Theorem \ref{thm 1.7.} and Theorem \ref{thm 1.10.}) improve the ones of \cite{AK213, AT21} by extending the validity of $s$ and $b$.

The first main result of this paper is the following.
\begin{theorem}\label{thm 1.7.}
Let $n\in \mathbb N$, $0\le s<\frac{n}{2} $, $0<b<\min \left\{2,\; n-s,\; 1+\frac{n-2s}{2} \right\}$ and $\sigma
=\frac{4-2b}{n-2s} $. Assume that $f$ is of class $X \left(\sigma,s,b\right)$. If $u_{0} \in H^{s} (\mathbb R^{n})$, then there exist $T_{\max }=T_{\max }(u_{0})>0$ and $T_{\min }=T_{\min }(u_{0})>0$ such that \eqref{GrindEQ__1_1_} has a unique, maximal solution
\begin{equation} \label{GrindEQ__1_11_}
u\in L^{\gamma (r)}_{\rm loc} \left(\left(-T_{\min } ,\;T_{\max } \right),\;H_{r,2}^{s} (\mathbb R^{n})\right),
\end{equation}
where $\left(\gamma (r),\;r\right)$ is an admissible pair satisfying
\footnote[1]{$a^{+}$ is a fixed number slightly larger than $a$ ($a^{+}=a+\varepsilon$ with $\varepsilon>0$ small enough).
}
\begin{equation} \label{GrindEQ__1_12_}
r=\left\{\begin{array}{l} {\left(\frac{\sigma n+n}{\sigma s+n-b}\right)^{+},\;n=1,\;2,} \\ {\frac{2n\sigma +2n}{n+2+2\sigma s -2b},\;n\ge 3.} \end{array}\right.
\end{equation}
Moreover, for any admissible pair $\left(\gamma (p),\;p\right)$, we have
\begin{equation} \label{GrindEQ__1_13_}
u\in C\left(\left(-T_{\min } ,\;T_{\max } \right),H^{s} \right)\cap L^{\gamma (p)}_{\rm loc} \left(\left(-T_{\min } ,\;T_{\max } \right),\;H_{p,2}^{s} (\mathbb R^{n})\right).
\end{equation}
If $\left\| u_{0} \right\| _{\dot{H}^{s}(\mathbb R^{n}) } $ is sufficiently small, then the above solution is  global and scatters.
Furthermore, the solution of \eqref{GrindEQ__1_1_} depends continuously on the initial data $u_{0}$ in the following sense. There exists $0<T<T_{\max } ,\, T_{\min } $ such that if $u_{0}^{m} \to u_{0} $ in $H^{s} (\mathbb R^{n})$ and if $u_{m} $ denotes the solution of \eqref{GrindEQ__1_1_} with the initial data $u_{0}^{m} $, then $0<T<T_{\max } \left(u_{0}^{m} \right),\, T_{\min } \left(u_{0}^{m} \right)$ for all sufficiently large $m$ and $u_{m} \to u$ in $L^{\gamma (p)} \left(\left[-T,\;T\right],\;L^{p,2}(\mathbb R^{n})\right)$ as $m\to \infty $ for any admissible pair $\left(\gamma (p),\, p\right)$. Especially, if $s>0$, then $u_{m} \to u$ in $C\left(\left[-T,\;T\right],\; H^{s-\varepsilon } \right)$ for all $\varepsilon >0$.

\end{theorem}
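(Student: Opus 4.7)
The plan is to set up a Banach fixed-point argument in Sobolev-Lorentz Strichartz spaces, working with a scaling-invariant norm as the critical setting demands. Fix the admissible pair $(\gamma(r),r)$ from \eqref{GrindEQ__1_12_}; the exponent $r$ is chosen so that $H_{r,2}^{s}$ sits at the critical Sobolev level for the nonlinearity $|x|^{-b}f(u)$ of homogeneity $\sigma+1$. I would introduce
\[Y_T = L^{\gamma(r)}\bigl([-T,T],\, H_{r,2}^{s}(\mathbb R^n)\bigr),\qquad B_{T,M} = \{u \in Y_T : \|u\|_{Y_T} \le M\},\]
metrised by the weaker distance $d(u,v) = \|u-v\|_{L^{\gamma(r)}([-T,T],\,L^{r,2})}$, and the Picard map
\[\Phi(u)(t) = S(t)u_0 - i\lambda \int_{0}^{t} S(t-\tau)\,|x|^{-b} f(u(\tau))\,d\tau.\]
For $M$ small and $T$ chosen so that $\|S(\cdot)u_0\|_{Y_T}$ is sufficiently small, the goal is to show that $\Phi$ maps $B_{T,M}$ into itself and is contractive for $d$; completeness of $(B_{T,M},d)$ follows from the standard weak-$\ast$ compactness trick for closed balls in reflexive Strichartz spaces.

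The technical heart, and the step I expect to be the main obstacle, is the pair of nonlinear estimates
\[\bigl\||x|^{-b} f(u)\bigr\|_{L^{\gamma(r)'}([-T,T],\,H_{r',2}^{s})} \lesssim \|u\|_{Y_T}^{1+\sigma},\]
\[\bigl\||x|^{-b}\bigl(f(u)-f(v)\bigr)\bigr\|_{L^{\gamma(r)'}([-T,T],\,L^{r',2})} \lesssim \bigl(\|u\|_{Y_T}^{\sigma} + \|v\|_{Y_T}^{\sigma}\bigr)\,d(u,v),\]
which, combined with the Strichartz estimate in Sobolev-Lorentz spaces, close the contraction. To prove them I would place $|x|^{-b}$ in the weak Lorentz space $L^{n/b,\infty}$, apply Lorentz H\"older, invoke the Sobolev embedding $H_{r,2}^{s} \hookrightarrow L^{\rho,2}$ for the dictated $\rho$, and use a fractional chain rule in Lorentz spaces (obtained from its Lebesgue counterpart by real interpolation), together with the structural hypothesis that $f$ is of class $X(\sigma,s,b)$. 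Covering the full range $0\le s<n/2$ with fractional $s$ and $b$ up to the stated bounds is precisely where \cite{AT21} had to restrict to $s\le 1$; pushing beyond requires a careful tracking of endpoints in the Lorentz chain/product rules and in the embeddings, which is the principal difficulty.

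Once the contraction is available, the remaining conclusions follow through standard arguments. Since $S(\cdot)u_0 \in L^{\gamma(r)}(\mathbb R, H_{r,2}^{s})$ by the homogeneous Strichartz estimate, $\|S(\cdot)u_0\|_{Y_T}\to 0$ as $T\to 0$, giving local existence and, by the blow-up alternative, a maximal solution with times $T_{\max}, T_{\min}$; when $\|u_0\|_{\dot H^s}$ is small, the smallness already holds on all of $\mathbb R$, yielding a global solution, and a Cauchy argument on $S(-t)u(t)$ as $t\to\pm\infty$, using the nonlinear estimate on the Duhamel tail, delivers scattering. For continuous dependence, if $u_0^m \to u_0$ in $H^s$ then by continuity of $u_0 \mapsto S(\cdot)u_0$ from $H^s$ into $Y_T$ one can pick $T,M$ uniform in $m$ for $m$ large; the difference estimate gives
\[\|u_m - u\|_{L^{\gamma(r)}([-T,T],\,L^{r,2})} \le C\|u_0^m - u_0\|_{L^{r,2}} \le C\|u_0^m - u_0\|_{H^s} \to 0,\]
and the Strichartz estimate upgrades this to convergence in $L^{\gamma(p)}([-T,T], L^{p,2})$ for every admissible pair. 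When $s>0$, taking $(\gamma(p),p) = (\infty,2)$ gives $u_m \to u$ in $L^\infty([-T,T], L^2)$, which, interpolated with the uniform bound in $L^\infty([-T,T], H^s)$ via $\|\cdot\|_{H^{s-\varepsilon}} \lesssim \|\cdot\|_{L^2}^{\varepsilon/s} \|\cdot\|_{H^s}^{1-\varepsilon/s}$, yields $u_m \to u$ in $C([-T,T], H^{s-\varepsilon})$ for every $\varepsilon>0$.
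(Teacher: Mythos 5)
Your proposal is correct and follows essentially the same route as the paper: a contraction on the ball $\{u:\|u\|_{L^{\gamma(r)}([-T,T],H^{s}_{r,2})}\le M\}$ equipped with the weaker metric $\|u-v\|_{L^{\gamma(r)}([-T,T],L^{r,2})}$, closed by the Sobolev--Lorentz Strichartz estimates together with the two nonlinear estimates you isolate (these are exactly the paper's Lemma 3.3, proved via $|x|^{-b}\in L^{n/b,\infty}$, Lorentz H\"older, and the fractional product/chain rules, with an additional Leibniz-rule reduction to handle $s>1$). The only blemish is the intermediate bound $\|u_m-u\|_{L^{\gamma(r)}L^{r,2}}\le C\|u_0^m-u_0\|_{L^{r,2}}$ in your continuous-dependence step, which should read $\le C\|u_0^m-u_0\|_{L^{2}}$ since Strichartz controls the free evolution by the $L^2$ (or $\dot H^s$) norm of the data; the final estimate against $\|u_0^m-u_0\|_{H^s}$ is unaffected.
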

Theorem \ref{thm 1.7.} applies in particular to the model case $f(u)=\lambda|u|^{\sigma}u$ with $\lambda\in\mathbb C$.
\begin{corollary}\label{cor 1.8.}
Let $n\in \mathbb N$, $0\le s<\frac{n}{2} $, $0<b<\min \left\{2,\; n-s,\; 1+\frac{n-2s}{2} \right\}$ and $\sigma=\frac{4-2b}{n-2s} $. Let $f(u)=\lambda|u|^{\sigma}u$ with $\lambda\in\mathbb C$. If $\sigma$ is not an even integer, assume further that $\sigma>\left\lceil s\right\rceil-1$. Then, for any $u_{0} \in H^{s} (\mathbb R^{n})$, \eqref{GrindEQ__1_1_} has a unique, maximal solution satisfying \eqref{GrindEQ__1_11_}--\eqref{GrindEQ__1_13_}. If $\left\| u_{0} \right\| _{\dot{H}^{s}(\mathbb R^{n}) } $ is sufficiently small, then the above solution is  global and scatters. Furthermore, the solution of \eqref{GrindEQ__1_1_} depends continuously on the initial data $u_{0}$ in the sense of Theorem \ref{thm 1.7.}.
\end{corollary}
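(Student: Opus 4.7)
The plan is to deduce Corollary \ref{cor 1.8.} as an immediate consequence of Theorem \ref{thm 1.7.}; the only substantive work is to verify that the model nonlinearity $f(u)=\lambda|u|^{\sigma}u$ falls into the class $X(\sigma,s,b)$ of Definition \ref{defn 1.1.} under the hypotheses stated. Once that is established, the conclusions \eqref{GrindEQ__1_11_}--\eqref{GrindEQ__1_13_}, the small-data global well-posedness and scattering, and the continuous dependence statement are inherited verbatim from Theorem \ref{thm 1.7.}. In other words, the corollary reduces to checking Remark \ref{rem 1.2.} in the critical regime $\sigma=\sigma_{s}$.

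I would split the verification into two cases, according to whether $\sigma$ is an even integer. In the first case, $f(u)=\lambda|u|^{\sigma}u=\lambda(u\bar{u})^{\sigma/2}u$ is literally a polynomial in $u$ and $\bar{u}$ with $\deg(f)=1+\sigma=1+\sigma_{s}$, and the hypothesis $1<\deg(f)\le 1+\sigma_{s}$ of the first bullet of Definition \ref{defn 1.1.} is automatic since $\sigma=\sigma_{s}>0$ (note $b<2$ and $s<n/2$). Thus $f\in X(\sigma,s,b)$ with no further work.

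In the second case, $\sigma$ is not an even integer, so by assumption $\sigma>\lceil s\rceil-1$. Set $m=\max\{\lceil s\rceil,1\}$. A standard calculation, treating $f$ as a function of $(\mathrm{Re}\,z,\mathrm{Im}\,z)$, shows that away from the origin $f\in C^{\infty}$ and
\[
\bigl|f^{(k)}(z)\bigr|\lesssim|z|^{\sigma+1-k},\quad 0\le k\le m,
\]
which forces $f\in C^{m}(\mathbb{C}\to\mathbb{C})$ at the origin as well, because $\sigma+1-m\ge \sigma-\lceil s\rceil+1>0$ in the range $s\ge 1$, and $\sigma+1-m=\sigma>0$ when $s<1$. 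Combined with $\sigma=\sigma_{s}$, the two requirements $\lceil s\rceil-1<\sigma\le\sigma_{s}$ and \eqref{GrindEQ__1_3_} of the second bullet of Definition \ref{defn 1.1.} are both satisfied; hence $f\in X(\sigma,s,b)$.

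The main (minor) obstacle is confirming the derivative bound for $k=m$ when $\sigma-m+1$ is small and positive, but this is a routine application of the chain rule to $|z|^{\sigma}$ combined with Leibniz, and has essentially been recorded in \cite{AK211} via Remark \ref{rem 1.2.}. With $f\in X(\sigma,s,b)$ in hand, an application of Theorem \ref{thm 1.7.} produces the maximal solution satisfying \eqref{GrindEQ__1_11_}--\eqref{GrindEQ__1_13_}, the small-data global existence and scattering conclusion, and the continuous dependence statement, completing the proof.
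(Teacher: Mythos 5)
Your proposal is correct and follows exactly the route the paper intends: the corollary is stated as an immediate consequence of Theorem \ref{thm 1.7.} once one checks, as in Remark \ref{rem 1.2.}, that $f(u)=\lambda|u|^{\sigma}u$ belongs to $X(\sigma,s,b)$ (polynomial case when $\sigma$ is an even integer, derivative bounds $|f^{(k)}(z)|\lesssim|z|^{\sigma+1-k}$ otherwise). Your case split and the verification that $\sigma+1-\max\{\lceil s\rceil,1\}>0$ under the stated hypotheses match the paper's (implicit) argument.
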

\begin{remark}\label{rem 1.9.}
\textnormal{Obviously, Corollary \ref{cor 1.8.} improves the result of \cite{AT21}, where the well-posedness was proved in the case $0\le s\le1$, $s<\frac{n}{2}$ and $0<b<\min \left\{2,\;n-2s\right\}$. To improve the validity of $s$, we establish the estimates of the nonlinearity $f(u)$ in Sobolev-Lorentz spaces of order $0\le s<\frac{n}{2}$ (see Lemma \ref{lem 3.1.}), while Aloui-Tayachi \cite{AT21} only used the well-known fractional chain rule (Lemma \ref{lem 2.11.}) which holds for $s\le1$. The improvement in the range of $b$ comes from the best technical choice of $r$ in Theorem \ref{thm 1.7.}, i.e. the choice of working space where we apply the contraction mapping principle.}
\end{remark}

The second main result of this paper concerns with the continuous dependence of the Cauchy problem for the $H^{s}$-critical INLS equation \eqref{GrindEQ__1_1_} in the standard sense in $H^{s}$, i.e. in the sense that the local solution flow is continuous $H^{s} \to H^{s} $.
\begin{theorem}\label{thm 1.10.}
Let $n\in \mathbb N$, $0<s<\frac{n}{2} $, $0<b<\min \left\{2,\; n-s,\; 1+\frac{n-2s}{2} \right\}$ and $\sigma=\frac{4-2b}{n-2s} $.
Assume that one of the following conditions is satisfied:
\begin{itemize}
  \item $f\left(z\right)$ is a polynomial in $z$ and $\bar{z}$ satisfying $\deg (f)=1+\sigma$.
  \item $0<s<1$, $\sigma > 1$ and $f\in C^{2} \left(\mathbb C\to \mathbb C\right)$ satisfies \eqref{GrindEQ__1_3_} for any $0\le k\le 2$ and $u\in \mathbb C$
  \item $s\ge 1$, $\sigma \ge \left\lceil s\right\rceil $ and $f\in C^{\left\lceil s\right\rceil } \left(\mathbb C\to \mathbb C\right)$ satisfies \eqref{GrindEQ__1_3_}
      for any $0\le k\le \left\lceil s\right\rceil $ and $u\in \mathbb C$. Furthermore,
      \begin{equation} \nonumber
      \left|f^{\left(\left\lceil s\right\rceil\right)} (u)-f^{\left(\left\lceil s\right\rceil \right)} (v)\right|\lesssim \left(|u|+|v|\right)^{\sigma -\left\lceil s\right\rceil}|u-v| ,
      \end{equation}
     for any $u,\;v\in \mathbb C$.
\end{itemize}
Then for any given $u_{0} \in H^{s} (\mathbb R^{n})$, the corresponding solution $u$ of the INLS equation \eqref{GrindEQ__1_1_} in Theorem \ref{thm 1.7.} depends continuously on the initial data $u_{0}$ in the following sense.
For any interval $\left[-S,\, T\right]\subset \left(-T_{\min } (u_{0}),\;T_{\max } (u_{0})\right)$, and every admissible pair $\left(\gamma (p),\;p\right)$, if $u_{0}^{m} \to u_{0} $ in $H^{s} (\mathbb R^{n})$ and if $u_{m} $ denotes the solution of \eqref{GrindEQ__1_1_} with the initial data $u_{0}^{m} $, then $u_{m} \to u$ in $L^{\gamma (p)} (\left[-S,\;T\right],\;H_{p,2}^{s} (\mathbb R^{n} ))$ as $m\to \infty $.
In particular, $u_{m} \to u$ in $C\left(\left[-S,\;T\right],\;H^{s} \right)$.
\end{theorem}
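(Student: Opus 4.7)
The plan is to upgrade the weak continuous dependence already provided by Theorem \ref{thm 1.7.} (convergence in $L^{\gamma(p)}(L^{p,2})$ and in $C(H^{s-\varepsilon})$) to full $H^{s}$-Strichartz convergence. Three ingredients are needed: a subinterval decomposition reducing everything to small-in-Strichartz-norm pieces, a nonlinear difference estimate at the level of $s$ derivatives in Sobolev-Lorentz spaces, and a Strichartz absorption / iteration argument. Concretely, since by Theorem \ref{thm 1.7.} the solution satisfies $u\in L^{\gamma(r)}_{\rm loc}((-T_{\min},T_{\max}),H^{s}_{r,2})$ and $[-S,T]$ is compactly contained in $(-T_{\min},T_{\max})$, I partition $[-S,T]$ into finitely many consecutive subintervals $I_{j}=[t_{j},t_{j+1}]$ on which $\|u\|_{L^{\gamma(r)}(I_{j},H^{s}_{r,2})}<\eta$ for a small $\eta>0$ to be chosen. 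It then suffices to propagate $H^{s}$-convergence of $u_{m}(t)$ to $u(t)$ across each $I_{j}$, starting from the subinterval containing $0$.

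The central technical step is to establish the difference estimate
\[
\bigl\||x|^{-b}(f(u_{m})-f(u))\bigr\|_{L^{\gamma'(\tilde r)}(I_{j},H^{s}_{\tilde r',2})}\lesssim \bigl(\|u_{m}\|^{\sigma}_{L^{\gamma(r)}(I_{j},H^{s}_{r,2})}+\|u\|^{\sigma}_{L^{\gamma(r)}(I_{j},H^{s}_{r,2})}\bigr)\|u_{m}-u\|_{L^{\gamma(r)}(I_{j},H^{s}_{r,2})}
\]
for a suitably chosen admissible pair $(\gamma(\tilde r),\tilde r)$ dual to $(\gamma'(\tilde r),\tilde r')$. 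The three hypotheses on $f$ correspond to three standard cases: in the polynomial case one expands $f(u_{m})-f(u)=(u_{m}-u)\cdot P(u_{m},u,\bar u_{m},\bar u)$ with $P$ of degree $\sigma$, then applies the fractional Leibniz rule in Sobolev-Lorentz spaces together with Hardy-Sobolev type bounds on $|x|^{-b}$; in the case $0<s<1$, $\sigma>1$, one writes $f(u_{m})-f(u)=(u_{m}-u)\int_{0}^{1}f'(u+\theta(u_{m}-u))\,d\theta$ and invokes the fractional chain rule (Lemma \ref{lem 2.11.}) on $f'$, using the bound $|f''|\lesssim(|u|+|u_{m}|)^{\sigma-1}$ that follows from \eqref{GrindEQ__1_3_}; in the case $s\ge 1$, one differentiates up to order $\lceil s\rceil$, estimates each resulting term by the chain rule combined with the pointwise Hölder-type hypothesis on $f^{(\lceil s\rceil)}$, and recovers the fractional part $s-\lfloor s\rfloor$ by interpolation.

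Plugging this difference estimate into the Sobolev-Lorentz Strichartz estimate applied to the Duhamel difference
\[
u_{m}(t)-u(t)=S(t-t_{j})(u_{m}(t_{j})-u(t_{j}))-i\lambda\int_{t_{j}}^{t}S(t-\tau)|x|^{-b}(f(u_{m})-f(u))(\tau)\,d\tau
\]
yields, for any admissible pair $(\gamma(p),p)$,
\[
\|u_{m}-u\|_{L^{\gamma(p)}(I_{j},H^{s}_{p,2})}\le C\|u_{m}(t_{j})-u(t_{j})\|_{H^{s}}+C\bigl(\eta^{\sigma}+o(1)\bigr)\|u_{m}-u\|_{L^{\gamma(r)}(I_{j},H^{s}_{r,2})},
\]
where $o(1)\to 0$ as $m\to\infty$ because $\|u_{m}\|_{L^{\gamma(r)}(I_{j},H^{s}_{r,2})}\to\|u\|_{L^{\gamma(r)}(I_{j},H^{s}_{r,2})}<\eta$ (a consequence of Theorem \ref{thm 1.7.} applied on $I_{j}$). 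First specializing to $p=r$ and absorbing the last term on the left gives control of $\|u_{m}-u\|_{L^{\gamma(r)}(I_{j},H^{s}_{r,2})}$ by $\|u_{m}(t_{j})-u(t_{j})\|_{H^{s}}$; plugging this back into the above inequality yields the corresponding bound for every admissible pair $(\gamma(p),p)$. Combined with the Strichartz embedding into $C(I_{j},H^{s})$ this gives $\|u_{m}(t_{j+1})-u(t_{j+1})\|_{H^{s}}\lesssim \|u_{m}(t_{j})-u(t_{j})\|_{H^{s}}$, and iterating over the finitely many $I_{j}$ propagates convergence from $t=0$ to all of $[-S,T]$.

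The main obstacle is the nonlinear difference estimate in the non-polynomial case when $s\ge 1$: the standard fractional chain rule is tailored to $\|D^{s}f(u)\|$ but not directly to the difference $\|D^{s}(f(u_{m})-f(u))\|$, so one has to exploit the full strength of the Hölder-type hypothesis on $f^{(\lceil s\rceil)}$ and chase the resulting estimates through Sobolev-Lorentz interpolation. Closing the estimate at the critical scaling $\sigma=(4-2b)/(n-2s)$ is exactly where the flexibility in the choice of the admissible pair \eqref{GrindEQ__1_12_} is essential.
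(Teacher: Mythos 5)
Your proposal is correct and follows essentially the same route as the paper: the heart of the argument is the nonlinear difference estimate $\||x|^{-b}(f(u_m)-f(u))\|_{\dot H^{s}_{\bar r',2}}\lesssim(\|u_m\|^{\sigma}_{\dot H^{s}_{r,2}}+\|u\|^{\sigma}_{\dot H^{s}_{r,2}})\|u_m-u\|_{\dot H^{s}_{r,2}}$ split into exactly the same three cases for $f$ (the paper's Lemmas \ref{lem 4.1.}--\ref{lem 4.4.}), fed into the Sobolev--Lorentz Strichartz estimates and closed by absorption under the same smallness condition coming from the critical local theory. The only presentational difference is that you make the extension from a small time interval to all of $[-S,T]$ explicit via a subinterval decomposition, whereas the paper proves the estimate on $[-T,T]$ with $\|S(t)u_0\|_{L^{\gamma(r)}H^{s}_{r,2}}<M/4$ and then invokes a standard compactness argument for the global interval.
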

Theorem \ref{thm 1.10.} applies in particular to the model case $f(u)=\lambda|u|^{\sigma}u$ with $\lambda\in\mathbb C$.
\begin{corollary}\label{cor 1.11.}
Let $n\in \mathbb N$, $0< s<\frac{n}{2} $, $0<b<\min \left\{2,\; n-s,\; 1+\frac{n-2s}{2} \right\}$ and $\sigma=\frac{4-2b}{n-2s} $. Let $f(u)=\lambda|u|^{\sigma}u$ with $\lambda\in\mathbb C$. If $\sigma$ is not an even integer, assume either $0<s<1$ and $\sigma>1$, or $s\ge 1$ and $\sigma\ge\left\lceil s\right\rceil$. Then for any $u_{0}\in H^{s}$, the corresponding solution the solution of \eqref{GrindEQ__1_1_} depends continuously on the initial data $u_{0}$ in the sense of Theorem \ref{thm 1.10.}.
\end{corollary}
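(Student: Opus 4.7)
The plan is to reduce Corollary~\ref{cor 1.11.} to Theorem~\ref{thm 1.10.} by verifying that, under the stated hypotheses on $s$ and $\sigma$, the model nonlinearity $f(u)=\lambda|u|^{\sigma}u$ falls into one of the three classes of nonlinearities listed in Theorem~\ref{thm 1.10.}. The argument is a case analysis that requires only elementary calculus on $\mathbb{C}\cong\mathbb{R}^{2}$, so I would present it as three short verifications followed by an invocation of Theorem~\ref{thm 1.10.}.

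First, if $\sigma$ is a positive even integer, say $\sigma=2k$, then $|u|^{\sigma}u=(u\bar u)^{k}u$ is a polynomial in $u$ and $\bar u$ with $\deg(f)=1+\sigma$, so $f$ satisfies the first bullet of Theorem~\ref{thm 1.10.}. Next, if $\sigma$ is not an even integer and $0<s<1$, the hypothesis $\sigma>1$ gives $\sigma+1>2$, hence $f\in C^{2}(\mathbb{C}\to\mathbb{C})$. Writing $|u|^{\sigma}u=u(u\bar u)^{\sigma/2}$ and differentiating, one obtains the bound $|f^{(k)}(z)|\lesssim|z|^{\sigma+1-k}$ for $0\le k\le 2$, which is the second bullet of Theorem~\ref{thm 1.10.}. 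Finally, if $\sigma$ is not an even integer and $s\ge 1$, then by assumption $\sigma\ge\lceil s\rceil$, and the same type of computation shows $f\in C^{\lceil s\rceil}(\mathbb{C}\to\mathbb{C})$ with $|f^{(k)}(z)|\lesssim|z|^{\sigma+1-k}$ for $0\le k\le\lceil s\rceil$. The remaining Hölder-type condition on $f^{(\lceil s\rceil)}$ follows, when $\sigma>\lceil s\rceil$, from the fundamental theorem of calculus applied along the segment between $u$ and $v$, combined with the bound $|f^{(\lceil s\rceil+1)}(z)|\lesssim|z|^{\sigma-\lceil s\rceil}$; when $\sigma=\lceil s\rceil$ (necessarily an odd integer under our assumptions), $f^{(\lceil s\rceil)}$ is positively homogeneous of degree $1$ and smooth away from the origin, hence globally Lipschitz on $\mathbb{C}$, so the Hölder bound holds with exponent $0$.

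The main (and essentially the only) subtle point is the boundary case $\sigma=\lceil s\rceil$, where $f^{(\lceil s\rceil+1)}$ may fail to exist at the origin and so the direct fundamental-theorem-of-calculus argument does not apply; this is handled by the homogeneity/Lipschitz observation above. With all three sub-cases settled, $f(u)=\lambda|u|^{\sigma}u$ satisfies the hypotheses of Theorem~\ref{thm 1.10.}, and the desired standard continuous dependence statement is immediate.
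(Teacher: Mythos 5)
Your proposal is correct and is essentially the argument the paper intends: the paper states Corollary \ref{cor 1.11.} as an immediate application of Theorem \ref{thm 1.10.} to the model case, leaving implicit exactly the verification you carry out (polynomial case for $\sigma$ an even integer, the $C^{2}$ bounds for $0<s<1$, $\sigma>1$, and the $C^{\lceil s\rceil}$ bounds plus the Lipschitz-type condition \eqref{GrindEQ__4_15_} for $s\ge 1$, $\sigma\ge\lceil s\rceil$). Your treatment of the boundary case $\sigma=\lceil s\rceil$ via degree-one homogeneity of $f^{(\lceil s\rceil)}$ is a correct and welcome explicit handling of the one point the paper glosses over.
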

\begin{remark}\label{rem 1.12.}
\textnormal{
\begin{enumerate}
 \item In fact, the continuous dependence in Theorem \ref{thm 1.10.} and Corollary \ref{cor 1.11.} is locally Lipschitz. See the proof of Theorem \ref{thm 1.10.} in Section 4.
 \item When $0<s<1$, the existence result in Corollary \ref{cor 1.8.} holds for  $\sigma>0$, while Corollary \ref{cor 1.11.} holds for $\sigma>1$. The standard continuous dependence in the case $0<s<1$ and $0<\sigma\le 1$ is still open. See also Remark 3 in \cite{AT21}.
 \item When $1< s< \frac{n}{2}$ and $\sigma$ is not an even integer, Corollary \ref{cor 1.8.} holds for $\sigma>\left\lceil s\right\rceil-1$, while Corollary \ref{cor 1.11.} holds for $\sigma\ge \left\lceil s\right\rceil$. The standard continuous dependence in the case $1< s< \frac{n}{2}$ and $\left\lceil s\right\rceil-1<\sigma< \left\lceil s\right\rceil$ is also an open problem.
\end{enumerate}}
\end{remark}

Based on the local well-posedness in $H^{1}$ (see Corollary \ref{cor 1.8.}), we establish the blow-up criteria for $H^{1}$-solutions to the focusing energy-critical INLS equation
\begin{equation} \label{GrindEQ__1_14_}
iu_{t} +\Delta u+|x|^{-b}
|u|^{\sigma} u=0,~u(0)=u_{0}\in H^{1}(\mathbb R^{n}),
\end{equation}
where $n\ge3$, $0<b<2$ and $\sigma:=\sigma_{1}=\frac{4-2b}{n-2}$.
Let $C_{HS}$ be the sharp constant in the Hardy-Sobolev inequality related to the focusing energy-critical INLS equation \eqref{GrindEQ__1_14_}, namely,
\begin{equation}\label{GrindEQ__1_15_}
C_{HS}:=\inf \left\{\left\|f\right\|_{\dot{H}^{1}}\div\left\| |x|^{-b} |u|^{\sigma_{1} +2} \right\|_{L^{1}}^{\frac{1}{\sigma_{1} +2}};\;f\in \dot{H}^{1}\setminus\left\{0\right\}\right\}.
\end{equation}
We will see in Lemma \ref{lem 5.3.} that the sharp constant $C_{HS}$ is attained by function:
\begin{equation}\label{GrindEQ__1_16_}
W_{b}(x):=\frac{\left[\varepsilon(n-b)(n-2)\right]^{\frac{n-2}{4-2b}}}
{\left(\varepsilon+|x|^{2-b}\right)^{\frac{n-2}{2-b}}},
\end{equation}
for all $\varepsilon>0$.

The last main result of this paper is the following blow-up result for the focusing energy-critical INLS equation \eqref{GrindEQ__1_14_}.
\begin{theorem}\label{thm 1.13.}
Let $n\ge 3$, $0<b<\min \left\{2,\;\frac{n}{2}\right\}$ and $\sigma=\frac{4-2b}{n-2}$.
Let $u_{0}\in H^{1}(\mathbb R^{n})$ and $u$ be the corresponding solution to the focusing energy-critical INLS equation \eqref{GrindEQ__1_14_} defined on the maximal time interval of existence $(-T_{*},\;T^{*})$.
Suppose that either $E(u_{0})<0$, or if $E(u_{0})\ge0$, we assume that $E(u_{0})<E(W_{b})$ and $\left\|u_{0}\right\|_{\dot{H}^{1}}>\left\|W_{b}\right\|_{\dot{H}^{1}}$.
Then the solution either blows up in finite time, or there exists a time sequence $(t_{n})_{n\ge 1}$ satisfying $|t_{n}|\to \infty$ such that $\left\|u(t_{n})\right\|_{\dot{H}^{1}}\to \infty$ as $n\to \infty$.
Moreover, if we assume in addition that
 \begin{itemize}
    \item $u_{0}$ has finite-variance,
    \item or $u_{0}$ is radially symmetric,
    \item or $b\ge 4-n$, and $u_{0}$ is cylindrically symmetric, i.e. $u_{0}\in \Sigma_{n}$, where
     \begin{equation}\label{GrindEQ__1_17_}
     \Sigma_{n}:=\left\{f\in H^{1}:\; f(y,x_{n})=f(|y|,x_{n}),\;x_{n}f\in L^{2}\right\}
     \end{equation}
     with $x=(y,x_{n})$, $y=(x_{1},\cdots,x_{n-1})\in \mathbb R^{n-1}$, and $x_{n}\in \mathbb R$,
 \end{itemize}
then the corresponding solution blows up in finite time, i.e., $T_{*},\;T^{*}<\infty$.
\end{theorem}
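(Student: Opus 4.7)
The plan combines a variational trapping argument near the ground state $W_b$ with virial (variance) identities, suitably adapted to the inhomogeneous energy-critical setting. Throughout, I denote the Pohozaev functional by $I(v) := \|v\|_{\dot H^1}^2 - \int|x|^{-b}|v|^{\sigma+2}\,dx$.

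\textbf{Energy trapping.} Using the Pohozaev identities for $W_b$ (which give $\int|x|^{-b}|W_b|^{\sigma+2} = \|W_b\|_{\dot H^1}^2$ and $E(W_b) = \tfrac{\sigma}{2(\sigma+2)}\|W_b\|_{\dot H^1}^2 = \tfrac{2-b}{2(n-b)}\|W_b\|_{\dot H^1}^2 > 0$) together with the sharp Hardy-Sobolev inequality (Lemma \ref{lem 5.3.}), the auxiliary function $h(y) := \tfrac12 y^2 - \tfrac{1}{\sigma+2}C_{HS}^{-(\sigma+2)}y^{\sigma+2}$ attains its unique positive maximum $E(W_b)$ at $y = \|W_b\|_{\dot H^1}$. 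The hypothesis $E(u_0) < E(W_b)$ and $\|u_0\|_{\dot H^1} > \|W_b\|_{\dot H^1}$ (which absorbs the subcase $E(u_0) < 0$, since $E(u_0) < 0 < E(W_b)$ automatically forces $\|u_0\|_{\dot H^1} > \|W_b\|_{\dot H^1}$) combined with energy conservation and continuity of $t\mapsto\|u(t)\|_{\dot H^1}$ propagates the strict inequality to the entire maximal interval:
\[
\|u(t)\|_{\dot H^1}^2 \ge (1+\delta)\|W_b\|_{\dot H^1}^2, \quad t \in (-T_*, T^*),
\]
for some $\delta > 0$. A short manipulation using the Pohozaev relations then yields the uniform coercivity $I(u(t)) \le -\delta_0 < 0$ on $(-T_*, T^*)$.

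\textbf{Finite-variance case and general dichotomy.} Since $\tfrac{n\sigma+2b}{\sigma+2} = 2$ in the energy-critical regime, the classical virial identity collapses to $V''(t) = 8\|u(t)\|_{\dot H^1}^2 - 8\int|x|^{-b}|u(t)|^{\sigma+2}\,dx = 8 I(u(t))$ for $V(t) := \int|x|^2|u(t)|^2\,dx$. For finite-variance $u_0$, the energy trapping yields $V''(t) \le -8\delta_0$, so $V$ is forced to be negative in finite time, contradicting $V \ge 0$; hence $T_*, T^* < \infty$. For the general dichotomy, one argues by contradiction: if $T^* = \infty$ and $\sup_{t\ge 0}\|u(t)\|_{\dot H^1} < \infty$, Strichartz theory (from Corollary \ref{cor 1.8.}) supplies uniform bounds on every finite subinterval and the energy trapping keeps $I(u(t)) \le -\delta_0$ for all time. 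A compactness/rigidity scheme (in the spirit of Kenig-Merle) then produces a trajectory precompact in $\dot H^1$, to which a suitably localized virial applies and yields a contradiction, forcing $\|u(t_n)\|_{\dot H^1} \to \infty$ along some sequence.

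\textbf{Radial and cylindrical blow-up.} For radial $u_0$, I employ the Ogawa-Tsutsumi truncated virial $V_R(t) := \int \phi_R(x)|u(t,x)|^2\,dx$ with $\phi_R(x) = R^2\phi(x/R)$, $\phi$ radial and smooth with $\phi(r) = r^2$ for $r \le 1$ and $\phi$ bounded for $r \ge 2$. One computes $V_R''(t) = 8 I(u(t)) + \mathrm{Err}_R(t)$, where the error is supported on $|x| \gtrsim R$; the radial Strauss estimate $\sup_{|x|\ge R}|u(x)| \lesssim R^{-(n-1)/2}\|u\|_{L^2}^{1/2}\|u\|_{\dot H^1}^{1/2}$, together with mass conservation and the temporary $\dot H^1$-bound, controls the nonlinear part of $\mathrm{Err}_R(t)$ by $C_M R^{-\theta}$ with $\theta > 0$. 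Choosing $R$ large gives $V_R''(t) \le -4\delta_0$, forcing $V_R(t) < 0$ in finite time. For cylindrically symmetric $u_0 \in \Sigma_n$, take $\phi_R(x) = R^2\phi(|y|/R) + x_n^2$, so that $V_R$ mixes the classical variance in $x_n$ with the truncated transverse variance in $y \in \mathbb R^{n-1}$; the transverse Strauss embedding on $\mathbb R^{n-1}$ provides an analogous pointwise decay for $|u(y,x_n)|$ away from the symmetry axis, and the constraint $b \ge 4-n$ is precisely the threshold that makes $|x|^{-b}$ integrable against this decay in the transverse error region, recovering $V_R''(t) \le -4\delta_0$ and hence finite-time blow-up.

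\textbf{Main obstacle.} The delicate point is the uniform in-time control of $\mathrm{Err}_R(t)$ in the truncated virials. In the general dichotomy the absence of symmetry obstructs any direct pointwise estimate and forces recourse to a compactness/rigidity argument; in the cylindrical case, the exact threshold $b \ge 4-n$ emerges from the sharp balance between the transverse Strauss decay and the singular weight $|x|^{-b}$, which is why no analogous result is available below this threshold.
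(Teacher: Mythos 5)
Your overall architecture (variational trapping near $W_b$ plus virial identities) matches the paper's, your unified treatment of the two energy hypotheses is correct (indeed $E(u_0)<0$ forces $g(\|u_0\|_{\dot H^1})<0$ and hence $\|u_0\|_{\dot H^1}>\|W_b\|_{\dot H^1}$), and the finite-variance case is handled exactly as in the paper via Glassey's convexity argument. However, there are two genuine gaps. First, for the general dichotomy you invoke an unproven Kenig--Merle compactness/rigidity scheme and assert that ``the absence of symmetry obstructs any direct pointwise estimate.'' This is both unjustified as written (building a profile decomposition and rigidity theory for the energy-critical INLS is a substantial undertaking you merely gesture at) and unnecessary: under the contradiction hypothesis $T^*=\infty$ and $\sup_{t\ge 0}\|u(t)\|_{\dot H^1}\le M_0$, mass conservation gives $\sup_t\|u(t)\|_{H^1}\le M_1$, and the localized virial estimate of Lemma \ref{lem 5.2.} — which holds for arbitrary, non-symmetric data with error $CR^{-2}+CR^{-b}\|u(t)\|_{H^1}^{\sigma+2}$ — immediately yields $\frac{d^2}{dt^2}V_{\varphi_R}(t)\le -c/2<0$ for $R$ large, contradicting $V_{\varphi_R}\ge 0$. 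This is the paper's (much shorter) route.

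Second, and more seriously, your radial and cylindrical blow-up arguments control $\mathrm{Err}_R(t)$ by $C_M R^{-\theta}$ using ``mass conservation and the temporary $\dot H^1$-bound.'' In the finite-time blow-up part there is no such bound: the whole point is that $\|u(t)\|_{\dot H^1}$ may be unbounded, and the Strauss-type estimates you invoke introduce positive powers of $\|u(t)\|_{\dot H^1}$ into the error that cannot be made uniformly small in $t$ by choosing $R$ large. The paper's fix (following Dinh and Dinh--Keraani) is to keep the error in the form $\varepsilon\|u(t)\|_{\dot H^1}^{2}+C_{\varepsilon}R^{-\theta}$ (radial case) or $CR^{-\theta}\|u(t)\|_{\dot H^1}^{2}+CR^{-\theta}$ (cylindrical case), and to absorb the $\|u\|_{\dot H^1}^{2}$-proportional piece using the \emph{strengthened} coercivity
\begin{equation*}
8\left\| u(t)\right\|_{\dot H^1}^{2}-\frac{4(n\sigma_1+2b)}{\sigma_1+2}\int |x|^{-b}|u(t,x)|^{\sigma_1+2}\,dx+\varepsilon\left\| u(t)\right\|_{\dot H^1}^{2}\le -c<0,
\end{equation*}
i.e.\ \eqref{GrindEQ__5_18_}, which in turn requires upgrading $\|u(t)\|_{\dot H^1}>\|W_b\|_{\dot H^1}$ to the quantitative gap $\|u(t)\|_{\dot H^1}\ge (1+\delta')\|W_b\|_{\dot H^1}$ via the continuity argument and \eqref{GrindEQ__5_9_}--\eqref{GrindEQ__5_10_}. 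Your coercivity statement $I(u(t))\le -\delta_0$ alone does not provide the $\varepsilon\|u\|_{\dot H^1}^2$ margin and so cannot close these cases. (Your heuristic for the threshold $b\ge 4-n$ is also not the paper's: there it enters as the condition $\sigma_1\le 2$ needed for the error structure in the cylindrical virial estimate quoted from Dinh--Keraani, not as an integrability condition on $|x|^{-b}$.)
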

\begin{remark}\label{rem 1.14.}
\textnormal{Theorem \ref{thm 1.13.} extends the well-known finite time blow-up results of \cite{DWZ16,KM06,KV10} for the focusing energy-critical NLS equation (i.e. \eqref{GrindEQ__1_14_} with $b = 0$) to the focusing energy-critical INLS equation \eqref{GrindEQ__1_14_}.}
\end{remark}

This paper is organized as follows. In Section 2, we recall some useful facts and prove some auxiliary results related to Sobolev-Lorentz spaces. In Section 3, we prove Theorem \ref{thm 1.7.}. In Section 4, we prove Theorem \ref{thm 1.10.}. Theorem \ref{thm 1.13.} is proved in Section 5.

\section{Preliminaries}
Let us introduce some notation used throughout the paper. As usual, we use $\mathbb C$, $\mathbb R$ and $\mathbb N$ to stand for the sets of complex, real and natural numbers, respectively. $\mathscr{F}$ denotes the Fourier transform, and the inverse Fourier transform is denoted by $\mathscr{F}^{-1}$. We also use the notation $\hat{f}$ instead of $\mathscr{F}f$. $C>0$ stands for a positive universal constant, which can be different at different places. The notation $a\lesssim b$ means $a\le Cb$ for some constant $C>0$. For $p\in \left[1,\;\infty \right]$, $p'$ denotes the dual number of $p$, i.e. $1/p+1/p'=1$.
For $s\in \mathbb R$, we denote by $\left[s\right]$ the largest integer which is less than or equal to $s$ and by $\left\lceil s\right\rceil$ the minimal integer which is larger than or equal to $s$. For a multi-index $\alpha =\left(\alpha _{1} ,\;\alpha _{2},\;\ldots ,\;\alpha _{n} \right)$, denote
$$
D^{\alpha } =\partial _{x_{1} }^{\alpha _{1} } \cdots \partial _{x_{n} }^{\alpha _{n} } , \;\left|\alpha \right|=\left|\alpha _{1} \right|+\cdots +\;\left|\alpha _{n} \right|, \;\xi^{\alpha } =\xi_{1}^{\alpha _{1} } \cdots \xi_{n}^{\alpha _{n} } .
$$
For a function $f(z)$ defined for a complex variable $z$ and for a positive integer $k$, the $k$-th order derivative of $f(z)$ and its norm are defined by
$$
f^{(k)}(z):=\left(\frac{\partial ^{k} f}{\partial z^{k} } ,\; \frac{\partial ^{k} f}{\partial z^{k-1} \partial \bar{z}} ,\; {\dots},\;\frac{\partial ^{k} f}{\partial \bar{z}^{k} } \right),~\left|f^{\left(k\right)} (z)\right|:=\sum _{i=0}^{k}\left|\frac{\partial ^{k} f}{\partial z^{k-i} \partial \bar{z}^{i} } \right|,
$$
where
$$
\frac{\partial f}{\partial z}=\frac{1}{2} \left(\frac{\partial f}{\partial x} -i\frac{\partial f}{\partial y} \right),\; \frac{\partial f}{\partial \bar{z}}=\frac{1}{2} \left(\frac{\partial f}{\partial x} +i\frac{\partial f}{\partial y} \right).
$$
As in \cite{WHHG11}, for $s\in \mathbb R$ and $1<p<\infty $, we denote by $H_{p}^{s} (\mathbb R^{n} )$ and $\dot{H}_{p}^{s} (\mathbb R^{n} )$ the nonhomogeneous Sobolev space and homogeneous Sobolev space, respectively. The norms of these spaces are given as
$$
\left\| f\right\| _{H_{p}^{s} (\mathbb R^{n} )} =\left\| J^{s} f\right\| _{L^{p} (\mathbb R^{n} )} , \;\left\| f\right\| _{\dot{H}_{p}^{s} (\mathbb R^{n} )} =\left\| I^{s} f\right\| _{L^{p} (\mathbb R^{n} )},
$$
where $J^{s} =\mathscr{F}^{-1} \left(1+|\xi|^{2} \right)^{\frac{s}{2} } \mathscr{F}$ and $I^{s} =\mathscr{F}^{-1} |\xi|^{s} \mathscr{F}$. As usual, we abbreviate $H_{2}^{s} (\mathbb R^{n} )$ and $\dot{H}_{2}^{s} (\mathbb R^{n} )$ as $H^{s} (\mathbb R^{n} )$ and $\dot{H}^{s} (\mathbb R^{n} )$, respectively. For
$0<p,\; q\le \infty $, we denote by $L^{p,q} \left(\mathbb R^{n}
\right)$ the Lorentz space.
The quasi-norms of these spaces are given by
$$
\left\|f\right\|_{L^{p,q} (\mathbb R^{n})}=:\left(\int_{0}^{\infty}{\left(t^{\frac{1}{p}}f^{*}(t)\right)^{q}
\frac{dt}{t}}\right)^{\frac{1}{q}},~~\textnormal{when}~~0<q<\infty,
$$
$$
\left\|f\right\|_{L^{p,\infty} (\mathbb R^{n})}:=\sup_{t>0}t^{\frac{1}{p}}f^{*}(t),~~\textnormal{when}~~q=\infty,
$$
where $f^{*}(t)=\inf\left\{\tau:M^{n}\left(\left\{x:|f(x)|>\tau\right\}\right)\le t\right\}$, with $M^{n}$ being the Lebesgue measure in $\mathbb R^{n}$. Note that $L^{p,q} \left(\mathbb R^{n}
\right)$ is a quasi-Banach space for $0<p,\; q\le \infty $.  When $1<p<\infty$ and $1\le q \le \infty$,  $L^{p,q} \left(\mathbb R^{n}
\right)$ can be turned into a Banach space via an equivalent norm.  Note also that $L^{p,p} (\mathbb R^{n})=L^{p}
(\mathbb R^{n})$. See \cite{G14} for details. For $I\subset \mathbb R$ and $\gamma \in \left[1,\;\infty \right]$, we will use the space-time mixed space $L^{\gamma } \left(I,X\left(\mathbb R^{n}
\right)\right)$ whose norm is defined by
\[\left\| f\right\|_{L^{\gamma } \left(I,\;X(\mathbb R^{n})\right)}
=\left(\int _{I}\left\| f\right\| _{X(\mathbb R^{n})}^{\gamma } dt
\right)^{\frac{1}{\gamma } } ,\]
with a usual modification when $\gamma =\infty $, where $X(\mathbb R^{n})$
is a normed space on $\mathbb R^{n} $. Given normed spaces $X$ and $Y$, $X\subset Y$ means that $X$ is continuously embedded in $Y$, i.e. there exists a constant $C\left(>0\right)$ such that $\left\| f\right\| _{Y} \le C\left\| f\right\| _{X} $ for all $f\in X$. If there is no confusion, $\mathbb R^{n} $ will be omitted in various function spaces.

We recall some useful facts about Lorentz spaces.

\begin{lemma}[\cite{G14}]\label{lem 2.1.}
For $0<p<\infty$, $|x|^{-\frac{n}{p} } $ is in
$L^{p,\infty } (\mathbb R^{n})$ with the norm $v_{n}^{1/p}$, where $v_{n}$ is the measure of the unit ball of $\mathbb R^{n}$.
\end{lemma}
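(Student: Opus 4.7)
The plan is to compute the distribution function and decreasing rearrangement of $f(x) = |x|^{-n/p}$ explicitly, then evaluate the $L^{p,\infty}$ quasi-norm directly from the definition given in the preliminaries, namely $\|f\|_{L^{p,\infty}} = \sup_{t>0} t^{1/p} f^{*}(t)$.

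First, I would compute the distribution function
\[
d_{f}(\tau) := M^{n}\!\left(\{x \in \mathbb{R}^{n} : |f(x)| > \tau\}\right).
\]
Since $|x|^{-n/p} > \tau$ is equivalent to $|x| < \tau^{-p/n}$, the superlevel set is precisely the open ball of radius $\tau^{-p/n}$ centered at the origin. Its Lebesgue measure equals $v_{n} \tau^{-p}$ by scaling. Next, using the definition $f^{*}(t) = \inf\{\tau : d_{f}(\tau) \le t\}$, I would invert the relation $v_{n}\tau^{-p} = t$ to obtain $f^{*}(t) = (v_{n}/t)^{1/p}$; this inversion is unambiguous because $d_{f}$ is continuous and strictly decreasing on $(0,\infty)$.

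Finally, I would substitute into the quasi-norm to get $t^{1/p} f^{*}(t) = v_{n}^{1/p}$, a constant independent of $t$, so the supremum over $t>0$ is exactly $v_{n}^{1/p}$. There is no genuine obstacle in this proof; it is a routine computation relying on the radial structure of $|x|^{-n/p}$ and the scaling of Lebesgue measure on balls. The only minor point to be careful about is ensuring that the distribution function is finite for every $\tau > 0$ (it is, by the explicit formula), so that the decreasing rearrangement is well-defined.
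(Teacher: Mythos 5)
Your computation is correct and is exactly the standard argument for this fact (the paper itself cites it to Grafakos without proof, and the reference proves it the same way: compute the distribution function $d_{f}(\tau)=v_{n}\tau^{-p}$, invert to get $f^{*}(t)=(v_{n}/t)^{1/p}$, and observe that $t^{1/p}f^{*}(t)\equiv v_{n}^{1/p}$). No gaps.
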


\begin{lemma}[\cite{G14}]\label{lem 2.2.}
For all $0<p,\;r<\infty $, $0<q\le
\infty $ we have
\[
\left\| \left|f\right|^{r} \right\| _{L^{p,q} } =\left\| f\right\| _{L^{pr,qr}
}^{r}.\]
\end{lemma}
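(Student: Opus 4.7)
The plan is to reduce the identity to a clean rearrangement statement, namely that for $g=|f|^r$ the decreasing rearrangement satisfies $g^*(t)=\bigl(f^*(t)\bigr)^r$, and then to compute both norms directly.

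First I would verify the rearrangement identity. Writing $d_h(\tau)=M^n(\{x:|h(x)|>\tau\})$ for the distribution function, the key observation is that for $\tau>0$
\[
d_{|f|^r}(\tau)=M^n\bigl(\{x:|f(x)|^r>\tau\}\bigr)=M^n\bigl(\{x:|f(x)|>\tau^{1/r}\}\bigr)=d_f(\tau^{1/r}).
\]
Plugging this into the definition
\[
(|f|^r)^*(t)=\inf\{\tau>0: d_{|f|^r}(\tau)\le t\}=\inf\{\tau>0: d_f(\tau^{1/r})\le t\},
\]
and changing variables $\sigma=\tau^{1/r}$ (which is a strictly increasing bijection on $(0,\infty)$ since $r>0$), I get
\[
(|f|^r)^*(t)=\inf\{\sigma^r:d_f(\sigma)\le t\}=\bigl(\inf\{\sigma:d_f(\sigma)\le t\}\bigr)^r=\bigl(f^*(t)\bigr)^r.
\]

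Next I would plug this identity into the Lorentz quasi-norms. For $0<q<\infty$,
\[
\bigl\||f|^r\bigr\|_{L^{p,q}}^{q}=\int_{0}^{\infty}\bigl(t^{1/p}(|f|^r)^*(t)\bigr)^{q}\,\frac{dt}{t}=\int_{0}^{\infty}t^{q/p}\bigl(f^*(t)\bigr)^{rq}\,\frac{dt}{t},
\]
and on the other hand
\[
\|f\|_{L^{pr,qr}}^{qr}=\int_{0}^{\infty}\bigl(t^{1/(pr)}f^*(t)\bigr)^{qr}\,\frac{dt}{t}=\int_{0}^{\infty}t^{q/p}\bigl(f^*(t)\bigr)^{qr}\,\frac{dt}{t},
\]
so the two expressions are equal and raising to the $1/q$ power gives $\||f|^r\|_{L^{p,q}}=\|f\|_{L^{pr,qr}}^{r}$. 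The case $q=\infty$ is even more direct: $\||f|^r\|_{L^{p,\infty}}=\sup_{t>0}t^{1/p}(f^*(t))^r=\bigl(\sup_{t>0}t^{1/(pr)}f^*(t)\bigr)^r=\|f\|_{L^{pr,\infty}}^{r}$.

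There is essentially no obstacle here; the only point that requires mild care is the change of variables $\sigma=\tau^{1/r}$ in the infimum, which is legitimate because $\tau\mapsto\tau^{1/r}$ is a homeomorphism of $(0,\infty)$. Everything else is algebraic manipulation of exponents, and no hypothesis beyond $0<p,r<\infty$ and $0<q\le\infty$ is used.
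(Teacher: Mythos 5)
Your proof is correct and is exactly the standard argument (the paper itself gives no proof, simply citing Grafakos, where the identity is established by the same rearrangement computation $(|f|^r)^*=(f^*)^r$ followed by a change of exponents in the quasi-norm). Nothing to add.
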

\begin{lemma}[\cite{G14}]\label{lem 2.3.}
Suppose $0<p\le \infty $ and $0<q<r\le \infty $. Then we have
\[\left\| f\right\| _{L^{p,r} } \le C_{p,q,r} \left\| f\right\| _{L^{p,q} } .\]
\end{lemma}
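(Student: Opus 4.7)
The plan is to establish this classical nesting of Lorentz spaces by the standard two-step argument: first derive the endpoint bound $\|f\|_{L^{p,\infty}} \lesssim \|f\|_{L^{p,q}}$ using monotonicity of the decreasing rearrangement, and then bootstrap to $L^{p,r}$ for intermediate $q<r<\infty$ by splitting the integrand.

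For Step 1, I exploit the fact that $f^{*}$ is non-increasing. Fix $t>0$. Then for $0<s\le t$ we have $f^{*}(s)\ge f^{*}(t)$, hence
\[
\|f\|_{L^{p,q}}^{q}\ge\int_{0}^{t}\bigl(s^{1/p}f^{*}(s)\bigr)^{q}\frac{ds}{s}\ge f^{*}(t)^{q}\int_{0}^{t}s^{q/p-1}ds=\frac{p}{q}\,t^{q/p}f^{*}(t)^{q}.
\]
Taking $q$-th roots and then the supremum in $t$ yields the pointwise bound $t^{1/p}f^{*}(t)\le (q/p)^{1/q}\|f\|_{L^{p,q}}$, and in particular $\|f\|_{L^{p,\infty}}\le (q/p)^{1/q}\|f\|_{L^{p,q}}$. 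This already settles the case $r=\infty$.

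For Step 2, when $q<r<\infty$ I factor
\[
\bigl(t^{1/p}f^{*}(t)\bigr)^{r}=\bigl(t^{1/p}f^{*}(t)\bigr)^{r-q}\bigl(t^{1/p}f^{*}(t)\bigr)^{q},
\]
use the pointwise estimate from Step 1 on the first factor, and integrate the second against $dt/t$:
\[
\|f\|_{L^{p,r}}^{r}\le \|f\|_{L^{p,\infty}}^{r-q}\,\|f\|_{L^{p,q}}^{q}\le\Bigl(\tfrac{q}{p}\Bigr)^{(r-q)/q}\|f\|_{L^{p,q}}^{r}.
\]
Taking $r$-th roots finishes the proof with an explicit constant $C_{p,q,r}=(q/p)^{1/q-1/r}$.

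I do not expect a genuine obstacle; the lemma is stated with a reference to \cite{G14}. The only mildly delicate point is that the hypothesis $0<q<r\le\infty$ forces $q<\infty$, so the weighted integral defining $\|f\|_{L^{p,q}}$ is meaningful and the Step 1 computation is justified. The endpoint $p=\infty$ is vacuous/degenerate, and the case $p<\infty$ is covered verbatim by the argument above.
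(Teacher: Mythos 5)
Your proof is correct and is precisely the standard argument from the cited reference \cite{G14} (monotonicity of $f^{*}$ gives the $L^{p,q}\hookrightarrow L^{p,\infty}$ bound with constant $(q/p)^{1/q}$, then interpolation of the integrand gives the general case with constant $(q/p)^{1/q-1/r}$); the paper itself supplies no proof, only the citation. Your remarks on the degenerate endpoint $p=\infty$ and on $q<\infty$ being forced by the hypotheses are also accurate, so there is nothing to add.
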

Next, we recall the H\"{o}lder inequality and Young inequality for Lorentz spaces. See \cite{O63} for example.

\begin{lemma}[H\"{o}lder inequality for Lorentz spaces]\label{lem 2.4.}
Let $1<p,\;q,\;r<\infty $, $1\le s,\;s_{1} ,\;s_{2}\le \infty $. Assume that
\[\frac{1}{p} +\frac{1}{q} =\frac{1}{r} ,~\frac{1}{s_{1} } +\frac{1}{s_{2} }
=\frac{1}{s} .\]
Then we have
\[\left\| fg\right\| _{L^{r,s} } \lesssim \left\| f\right\|
_{L^{p,s_{1} } } \left\| g\right\| _{L^{q,s_{2} } } .\]
\end{lemma}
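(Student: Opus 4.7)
The plan is to prove this by reducing to a scalar Hölder inequality on $(0,\infty)$ applied to the decreasing rearrangements of $f$ and $g$. The key pointwise input is the submultiplicativity
\[
(fg)^{*}(t)\le f^{*}(t/2)\,g^{*}(t/2),\qquad t>0,
\]
which follows from a distribution-function argument: if $A=\{x:|f(x)|>f^{*}(t/2)\}$ and $B=\{x:|g(x)|>g^{*}(t/2)\}$, then $M^{n}(A),M^{n}(B)\le t/2$, so on the complement of $A\cup B$ (of measure $\le t$) one has $|fg|\le f^{*}(t/2)g^{*}(t/2)$, and the inequality for the decreasing rearrangement follows. I would record this as a short preliminary step.

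Next I would substitute this bound into the Lorentz quasi-norm. For $s<\infty$, change variables $u=t/2$ in
\[
\|fg\|_{L^{r,s}}^{s}=\int_{0}^{\infty}\bigl(t^{1/r}(fg)^{*}(t)\bigr)^{s}\frac{dt}{t}
\le 2^{s/r}\int_{0}^{\infty}u^{s/r}\bigl[f^{*}(u)\bigr]^{s}\bigl[g^{*}(u)\bigr]^{s}\frac{du}{u},
\]
and then split the weight using $1/p+1/q=1/r$ to write $u^{s/r}=u^{s/p}\cdot u^{s/q}$, yielding the integrand $[u^{1/p}f^{*}(u)]^{s}[u^{1/q}g^{*}(u)]^{s}$.

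Since $1/s_{1}+1/s_{2}=1/s$ gives $s/s_{1}+s/s_{2}=1$, an application of the classical Hölder inequality on $(0,\infty)$ with measure $du/u$, with exponents $s_{1}/s$ and $s_{2}/s$, separates the two factors and produces exactly $\|f\|_{L^{p,s_{1}}}^{s}\|g\|_{L^{q,s_{2}}}^{s}$ after taking the $s$-th root. The cases where one or more of $s,s_{1},s_{2}$ equals $\infty$ are handled analogously, with the corresponding integrals replaced by $\sup_{u>0}u^{1/p}f^{*}(u)$ and the scalar Hölder step becoming a trivial pointwise bound.

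The only real obstacle is the quasi-norm/norm issue: what we have estimated is the natural quasi-norm based on $f^{*}$, while in the regime $1<p<\infty$, $1\le s\le\infty$ one customarily uses the equivalent norm built from the Hardy maximal $f^{**}$. Since we only claim the inequality up to a constant ($\lesssim$), the equivalence of the two expressions (with constants depending only on $p,s$) lets us pass freely between them, so no additional work is needed beyond noting this equivalence at the end.
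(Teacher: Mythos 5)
Your proof is correct: the submultiplicativity $(fg)^{*}(t)\le f^{*}(t/2)\,g^{*}(t/2)$, the change of variables, the splitting of the weight via $1/p+1/q=1/r$, and the scalar H\"older step with conjugate exponents $s_{1}/s$ and $s_{2}/s$ all go through, and the endpoint cases are as routine as you indicate. The paper itself offers no proof of this lemma, citing it to O'Neil \cite{O63}; your argument is precisely the classical one from that reference (and, since the paper defines the Lorentz quasi-norms directly via $f^{*}$, your closing remark about passing to the $f^{**}$-based norm is not even needed).
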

\begin{lemma}[Young inequality for Lorentz spaces]\label{lem 2.5.}
Let $1<p,\;q,\;r<\infty $, $1\le s,\;s_{1} ,\;s_{2} \le \infty $. Assume that
\[\frac{1}{p} +\frac{1}{q} =\frac{1}{r} +1,~\frac{1}{s_{1} } +\frac{1}{s_{2} } =\frac{1}{s} .\]
Then we have
\[\left\| f*g\right\| _{L^{r,s} } \lesssim \left\| f\right\| _{L^{p,s_{1} } } \left\| g\right\| _{L^{q,s_{2} } } .\]
\end{lemma}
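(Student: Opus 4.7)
The plan is to adapt O'Neil's classical argument, reducing the bilinear inequality to a pointwise rearrangement estimate followed by a weighted Hardy bound. After replacing $f,g$ by $|f|,|g|$ we may assume they are nonnegative. My starting point will be O'Neil's rearrangement inequality
\begin{equation*}
(f*g)^{**}(t) \le t\, f^{**}(t)\, g^{**}(t) + \int_t^\infty f^*(u)\, g^*(u)\, du, \qquad t>0,
\end{equation*}
where $h^*$ denotes the decreasing rearrangement and $h^{**}(t)=\frac{1}{t}\int_0^t h^*(u)\,du$. I would prove this by splitting $f=f\mathbf{1}_{\{f>f^*(t)\}}+f\mathbf{1}_{\{f\le f^*(t)\}}$, estimating one piece in $L^1$ and the other in $L^\infty$ when convolved with $g$, and invoking sub-additivity of the distribution function of $f*g$.

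Next, since $1<r<\infty$, I will use the well-known equivalence
\begin{equation*}
\|h\|_{L^{r,s}} \approx \bigl\|t^{1/r} h^{**}(t)\bigr\|_{L^{s}(\mathbb{R}_+,\,dt/t)},
\end{equation*}
together with the analogous equivalences for $L^{p,s_1}$ and $L^{q,s_2}$. Multiplying O'Neil's inequality by $t^{1/r}$ and using the hypothesis $1/p+1/q=1+1/r$, the diagonal contribution becomes $(t^{1/p} f^{**}(t))(t^{1/q} g^{**}(t))$; H\"older's inequality in $L^{s}(dt/t)$ with the splitting $1/s_1+1/s_2=1/s$ then bounds its $L^s(dt/t)$-norm by $\|f\|_{L^{p,s_1}}\|g\|_{L^{q,s_2}}$.

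For the tail integral $t^{1/r}\int_t^\infty f^*(u) g^*(u)\,du$, I would rewrite the integrand as $u^{-1/r-1}(u^{1/p}f^*(u))(u^{1/q} g^*(u))$ and invoke Hardy's inequality for the weighted averaging operator $\varphi\mapsto t^{1/r}\int_t^\infty u^{-1/r-1}\varphi(u)\,du$ on $L^{s}(\mathbb{R}_+,dt/t)$, whose boundedness uses $1/r>0$ and holds for every $s\in[1,\infty]$. Combined again with H\"older on $L^{s_1}\times L^{s_2}\to L^{s}$, this yields the desired bound.

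The main technical obstacle will be the careful level-set verification of O'Neil's pointwise rearrangement inequality, together with the bookkeeping at the endpoint cases $s, s_1, s_2\in\{1,\infty\}$ where integrals must be replaced by suprema (the arguments are analogous but notationally heavier). An alternative, shorter route would be bilinear real interpolation in the sense of Lions--Peetre: starting from the classical Young inequality in Lebesgue spaces at two diagonal points compatible with $1/p+1/q=1+1/r$, one interpolates to recover the full Lorentz statement; however, the O'Neil approach above has the advantage of being self-contained and transparent regarding the role of the $s_j$-indices.
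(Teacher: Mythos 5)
Your proposal is correct and is precisely the classical argument of O'Neil in the reference \cite{O63} that the paper cites for this lemma (the paper itself gives no proof, recalling the statement from that source): the rearrangement inequality $(f*g)^{**}(t)\le t f^{**}(t)g^{**}(t)+\int_t^\infty f^*g^*\,du$, the $h^{**}$-characterization of the $L^{r,s}$ norm for $r>1$, H\"older in the secondary indices, and Hardy's inequality for the tail. No discrepancy to report.
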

In this paper, we mainly use the homogeneous and nonhomogeneous Sobolev-Lorentz spaces which are defined as follows. See \cite{AT21, HYZ12} for example.
\begin{definition}\label{defn_2.6.}
\textnormal{Let $s\ge 0$, $1<p<\infty$ and $1\le q \le \infty$. The homogeneous Sobolev-Lorentz space $\dot{H}^{s}_{p,q}(\mathbb R^{n})$ is defined as the set of functions satisfying $I^{s}f \in L^{p,q}(\mathbb R^{n})$, equipped with the norm
\[\left\|f\right\|_{\dot{H}^{s}_{p,q}(\mathbb R^{n})}:=\left\|I^{s}f\right\|_{L^{p,q}(\mathbb R^{n})}.\]
The nonhomogeneous Sobolev-Lorentz space ${H}^{s}_{p,q}(\mathbb R^{n})$ is defined as the set of functions satisfying $J^{s}f \in L^{p,q}(\mathbb R^{n})$, equipped with the norm
\[\left\|f\right\|_{{H}^{s}_{p,q}(\mathbb R^{n})}:=\left\|f\right\|_{L^{p,q}(\mathbb R^{n})}+\left\|I^{s}f\right\|_{L^{p,q}(\mathbb R^{n})}.\]}
\end{definition}
\begin{lemma}\label{lem 2.7.}
Let $s\ge 0$, $1<p<\infty$ and $1\le q_{1}\le q_{2}\le \infty$. Then we have

$(a)$ $\dot{H}^{s}_{p,1}\subset \dot{H}^{s}_{p,q_{1}} \subset \dot{H}^{s}_{p,q_{2}} \subset \dot{H}^{s}_{p,\infty}$.

$(b)$ $\dot{H}^{s}_{p,p}=\dot{H}^{s}_{p}$.
\end{lemma}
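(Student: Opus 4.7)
The plan is to reduce the lemma directly to known embeddings of Lorentz spaces and the definition of $\dot{H}^s_{p,q}$, since the Riesz potential $I^s$ is the only ingredient beyond Lorentz theory in the very definition of these spaces.

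For part $(a)$, I would observe that by Definition \ref{defn_2.6.} the statement $f \in \dot{H}^{s}_{p,q}$ is equivalent to $I^s f \in L^{p,q}$, with equality of norms. Hence all four inclusions in $(a)$ reduce to the chain of Lorentz-space embeddings
\[
L^{p,1}\subset L^{p,q_{1}}\subset L^{p,q_{2}}\subset L^{p,\infty},
\]
applied to the single function $I^s f$. The inclusion $L^{p,q_{1}}\subset L^{p,q_{2}}$ for $q_{1}\le q_{2}$ (and in particular the endpoint cases $q_{1}=1$ and $q_{2}=\infty$) is exactly Lemma \ref{lem 2.3.}, which supplies a constant $C_{p,q_{1},q_{2}}$ with $\|g\|_{L^{p,q_{2}}}\le C_{p,q_{1},q_{2}}\|g\|_{L^{p,q_{1}}}$. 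Applying this with $g=I^s f$ and reading off the resulting inequality in terms of the $\dot{H}^{s}_{p,q}$ norm gives each of the three embeddings, and the corresponding continuity constants carry over verbatim.

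For part $(b)$, the identity $L^{p,p}(\mathbb R^n)=L^{p}(\mathbb R^n)$ with equivalent norms is recorded just after Lemma \ref{lem 2.3.} in Section 2 (see \cite{G14}). Therefore $I^s f \in L^{p,p}$ if and only if $I^s f \in L^{p}$, and the quasi-norms $\|I^s f\|_{L^{p,p}}$ and $\|I^s f\|_{L^{p}}$ are equivalent. By Definition \ref{defn_2.6.} and the definition of $\dot{H}^s_p$ recalled earlier in Section 2, this is precisely the assertion $\dot{H}^{s}_{p,p}=\dot{H}^{s}_{p}$ with equivalent norms.

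There is no real obstacle here: both parts are formal consequences of standard Lorentz-space facts (Lemma \ref{lem 2.3.} and the identity $L^{p,p}=L^p$) once one unwinds the definition of $\dot{H}^s_{p,q}$. The only mild point to be careful about is that Lorentz spaces are only quasi-Banach in general, so the embedding constants are not necessarily $1$; however, since the conclusion of Lemma \ref{lem 2.7.} is phrased purely as continuous inclusions, this causes no difficulty.
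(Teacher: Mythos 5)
Your proposal is correct and is exactly the argument the paper intends: the paper's proof is the one-line remark that the lemma ``is easily deduced from the properties of Lorentz spaces,'' and your reduction via $\|f\|_{\dot H^{s}_{p,q}}=\|I^{s}f\|_{L^{p,q}}$ to Lemma \ref{lem 2.3.} and the identity $L^{p,p}=L^{p}$ is precisely that deduction, spelled out.
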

\begin{proof}
It is easily deduced from the properties of Lorentz spaces.
\end{proof}
\begin{lemma}\label{lem 2.8.}
Let $0\le s_{2} \le s_{1} <\infty $ and $1<p_{1} \le p_{2} <\infty $ with $s_{1} -\frac{n}{p_{1} } =s_{2} -\frac{n}{p_{2} } $. Then, for any $1\le q\le \infty$, there holds the embedding: $\dot{H}_{p_{1},q}^{s_{1} } \subset \dot{H}_{p_{2},q}^{s_{2} }$.
\end{lemma}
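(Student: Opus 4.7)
This lemma is a Hardy--Littlewood--Sobolev embedding adapted to Sobolev--Lorentz spaces, and the natural route is to reduce it to a Young-type convolution estimate via the Riesz potential. If $s_{1}=s_{2}$, the scaling identity forces $p_{1}=p_{2}$ and the embedding is the identity, so we may assume $s_{1}>s_{2}$ and set $\alpha:=s_{1}-s_{2}>0$. From $1<p_{1}\le p_{2}<\infty$ and the scaling relation $\alpha/n=1/p_{1}-1/p_{2}$, one checks $0<\alpha<n$, which is precisely the range on which the Riesz potential $I^{-\alpha}=\mathscr{F}^{-1}|\xi|^{-\alpha}\mathscr{F}$ admits the standard convolution representation $I^{-\alpha}h=c_{n,\alpha}\,|x|^{-(n-\alpha)}\ast h$.

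For $f\in\dot{H}_{p_{1},q}^{s_{1}}$, set $g:=I^{s_{1}}f$, so that $\|f\|_{\dot{H}_{p_{1},q}^{s_{1}}}=\|g\|_{L^{p_{1},q}}$ and $I^{s_{2}}f=I^{-\alpha}g=c_{n,\alpha}\,|x|^{-(n-\alpha)}\ast g$. By Lemma \ref{lem 2.1.}, $|x|^{-(n-\alpha)}\in L^{n/(n-\alpha),\infty}(\mathbb R^{n})$, and the Young inequality for Lorentz spaces (Lemma \ref{lem 2.5.}) applied with exponents $(P,S_{1})=(n/(n-\alpha),\infty)$, $(Q,S_{2})=(p_{1},q)$, $(R,S)=(p_{2},q)$ yields
\[
\|f\|_{\dot{H}_{p_{2},q}^{s_{2}}}=\|I^{s_{2}}f\|_{L^{p_{2},q}}\lesssim \||x|^{-(n-\alpha)}\|_{L^{n/(n-\alpha),\infty}}\,\|g\|_{L^{p_{1},q}}\lesssim \|f\|_{\dot{H}_{p_{1},q}^{s_{1}}}.
\]
To apply Lemma \ref{lem 2.5.} one must verify the scaling relations. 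The spatial condition $\tfrac{n-\alpha}{n}+\tfrac{1}{p_{1}}=\tfrac{1}{p_{2}}+1$ reduces to $\tfrac{1}{p_{1}}-\tfrac{1}{p_{2}}=\tfrac{\alpha}{n}=\tfrac{s_{1}-s_{2}}{n}$, which is the hypothesis. The Lorentz second-index condition $\tfrac{1}{\infty}+\tfrac{1}{q}=\tfrac{1}{q}$ is immediate. Finally, the assumptions $1<p_{1}\le p_{2}<\infty$ and $0<\alpha<n$ place $p_{1},p_{2},n/(n-\alpha)$ in the permitted range $(1,\infty)$ of Lemma \ref{lem 2.5.}, and $q\in[1,\infty]$ lies in the permitted range of second Lorentz indices.

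\textbf{Main obstacle.} There is essentially no deep obstruction: the proof is an algebraic verification that the Young exponents match the Sobolev scaling, built on two inputs already recorded in the preliminaries (Lemmas \ref{lem 2.1.} and \ref{lem 2.5.}). The only subtlety worth flagging is the convolution representation of $I^{-\alpha}$ on the appropriate distributional class; this is standard tempered-distribution calculus for $0<\alpha<n$ and does not require any additional machinery, so the argument reduces to the bookkeeping above.
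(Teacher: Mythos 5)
Your proof is correct and follows essentially the same route as the paper: writing $I^{s_{2}}f$ as the convolution of $|x|^{-(n-(s_{1}-s_{2}))}$ with $I^{s_{1}}f$ and invoking Lemma \ref{lem 2.1.} together with the Young inequality for Lorentz spaces (Lemma \ref{lem 2.5.}). Your explicit treatment of the degenerate case $s_{1}=s_{2}$ (where the embedding is the identity) is a small but welcome addition, since the convolution argument requires $s_{1}>s_{2}$.
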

\begin{proof}
It follows from Lemma \ref{lem 2.1.} and Lemma \ref{lem 2.5.} that
\begin{eqnarray}\begin{split}\nonumber
\left\|f\right\|_{\dot{H}_{p_{2},q}^{s_{2}} }&=\left\|\mathscr{F}^{-1}\left(|\xi|^{-(s_{1}-s_{2})}\right)*I^{s_{1}}f\right\|_{L^{p_{2},q}}
=C \left\||x|^{-(n+s_{2}-s_{1})}*I^{s_{1}}f\right\|_{L^{p_{2},q}}\\
& \lesssim\left\||x|^{-(n+s_{2}-s_{1})}\right\|_{L^{\bar{p},\infty}}
\left\|I^{s_{1}}f\right\|_{L^{p_{1},q}}
\lesssim\left\|f\right\|_{\dot{H}_{p_{1},q}^{s_{1}}},
\end{split}\end{eqnarray}
where $\frac{1}{\bar{p}}=1-\frac{1}{p_{1}}+\frac{1}{p_{2}}=\frac{n+s_{2}-s_{1}}{n}$.
\end{proof}
\begin{lemma}\label{lem 2.9.}
Let $s\ge 0$, $1<p<\infty $, $1\le q\le \infty$ and $v=s-[s]$. Then we have
\[ \left\|f\right\|_{\dot{H}_{p,q}^{s} }\sim \sum _{\left|\alpha \right|=[s]}\left\|D^{\alpha }f\right\| _{\dot{H}_{p,q}^{v}}.\]
\end{lemma}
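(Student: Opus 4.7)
The plan is to reduce the statement to the integer-order case by factoring $I^s=I^{v}I^{[s]}$, and then to invoke the boundedness on $L^{p,q}(\mathbb R^n)$ of Fourier multipliers that are smooth and homogeneous of degree zero away from the origin (in particular, Riesz transforms and their finite products).

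First I would observe that $I^{v}$ commutes with every $D^\alpha$ on the tempered distribution side where the operators are defined, and also commutes with $I^{[s]}$. Setting $g:=I^{v}f$, the claimed equivalence
\[\|I^{[s]}g\|_{L^{p,q}}\sim \sum_{|\alpha|=[s]}\|D^\alpha g\|_{L^{p,q}}\]
implies the lemma upon replacing $g$ by $I^{v}f$ and using the definition of $\dot H^{v}_{p,q}$. For this integer-order equivalence I would use two Fourier-multiplier identities. On the one hand, for any $|\alpha|=[s]$,
\[D^\alpha g = m_\alpha(D)\,I^{[s]}g, \qquad m_\alpha(\xi)=\frac{(i\xi)^\alpha}{|\xi|^{[s]}},\]
and on the other, expanding $|\xi|^{2[s]}=\bigl(\xi_1^2+\cdots+\xi_n^2\bigr)^{[s]}$ by the multinomial theorem and dividing by $|\xi|^{[s]}$ gives
\[I^{[s]}g \;=\; \sum_{|\alpha|=[s]} c_\alpha\, \tilde m_\alpha(D)\,D^\alpha g, \qquad \tilde m_\alpha(\xi)=\frac{\xi^\alpha}{|\xi|^{[s]}},\]
for explicit constants $c_\alpha$. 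Each of $m_\alpha$ and $\tilde m_\alpha$ is smooth on $\mathbb R^n\setminus\{0\}$ and homogeneous of degree $0$, so by the Mikhlin multiplier theorem each of the operators $m_\alpha(D)$ and $\tilde m_\alpha(D)$ is a Calder\'on--Zygmund operator, and hence of strong type $(r,r)$ for every $1<r<\infty$.

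To upgrade this $L^r$ boundedness to $L^{p,q}$, I would pick $1<r_0<p<r_1<\infty$ and invoke the Marcinkiewicz real interpolation theorem, which yields boundedness on $L^{p,q}(\mathbb R^n)$ for every $1\le q\le\infty$. Applying these bounds to the two multiplier identities above gives both directions of the equivalence for $g$, and then the substitution $g=I^{v}f$ finishes the lemma.

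The only delicate point is Step 3, the Lorentz-space boundedness of the zero-order homogeneous multipliers, and in particular the endpoint case $q=\infty$; but this follows cleanly from the Marcinkiewicz theorem once one fixes two exponents straddling $p$. The rest of the argument is purely symbolic manipulation on the Fourier side and requires no estimates beyond the well-known $L^{p,q}$ mapping properties of Calder\'on--Zygmund operators already used implicitly in the preceding lemmas of Section~2.
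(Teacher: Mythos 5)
Your proposal is correct and follows essentially the same route as the paper's proof: both directions rest on the $L^{p,q}$-boundedness of the zero-homogeneous multiplier $\xi^{\alpha}/|\xi|^{[s]}$, deduced from its $L^{r}$-boundedness via the Marcinkiewicz real interpolation theorem, together with the multinomial expansion of $|\xi|^{2[s]}$ for the reverse inequality. The only cosmetic difference is that you substitute $g=I^{v}f$ up front, whereas the paper keeps the factor $|\xi|^{v}$ inside the symbol manipulation; since $I^{v}$ commutes with these multipliers the two presentations are identical.
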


\begin{proof}
The case $s<1$ is trivial and we assume that $s\ge 1$. We know that
\begin{equation} \nonumber
D^{\alpha } f=C_{\alpha } \mathscr{F}^{-1} \left(\xi ^{\alpha } \hat{f}(\xi)\right)=C_{\alpha } \mathscr{F}^{-1} \left(\rho_{1} (\xi)|\xi|^{[s]} \hat{f}(\xi)\right),
\end{equation}
where $\rho_{1} (\xi)=\frac{\xi ^{\alpha } }{|\xi|^{[s]} } $. Noticing that $\rho_{1} (\xi)$ is a multiplier on $L^{p} (\mathbb R^{n})$ for any $1<p<\infty$, and using the general Marcinkiewicz interpolation theorem (see Theorem 5.3.2 in \cite{BL76}), we can see that $\rho_{1} (\xi)$ is a multiplier on $L^{p,q} (\mathbb R^{n})$ for any $1<p<\infty$ and $1\le q \le \infty$. Thus we have
\begin{equation}\nonumber
\left\|D^{\alpha }f\right\| _{\dot{H}_{p,q}^{v}}=\left\| C_{\alpha } \mathscr{F}^{-1}\left(|\xi|^{v} \rho_{1} (\xi)|\xi|^{[s]} \hat{f}(\xi)\right)\right\| _{L^{p,q}} =\left\| C_{\alpha } \mathscr{F}^{-1}\left(\rho_{1} (\xi)|\xi|^{s} \hat{f}(\xi)\right)\right\| _{L^{p,q}}
\lesssim \left\|f\right\|_{\dot{H}_{p,q}^{s}}.
\end{equation}
Conversely, we know that
\begin{equation} \nonumber
|\xi|^{[s]} =\frac{\left(\xi _{1}^{2} +\cdots +\xi _{n}^{2} \right)^{[s]} }{|\xi|^{[s]} } =\sum _{\left|\alpha \right|=[s]}C_{\alpha } \xi ^{\alpha } \rho_{1} (\xi).
\end{equation}
 Since $\rho_{1} (\xi)$ is a multiplier on $L^{p,q} (\mathbb R^{n})$, we have
\begin{equation} \nonumber
\mathscr{F}^{-1} \left(|\xi|^{s} \hat{f}\right)=\mathscr{F}^{-1} \left(|\xi|^{v} |\xi|^{[s]} \hat{f}\right)=\sum _{\left|\alpha \right|=[s]}C_{\alpha } \mathscr{F}^{-1}\left(\rho_{1} (\xi)|\xi|^{v} \xi ^{\alpha } \hat{f}\right) ,
\end{equation}
which implies that
\begin{equation} \nonumber
\left\|f\right\|_{\dot{H}_{p,q}^{s}}\lesssim \sum _{\left|\alpha \right|=[s]}\left\| \mathscr{F}^{-1}\left(\rho_{1} (\xi)|\xi|^{v} \xi ^{\alpha } \hat{f}\right)\right\| _{L^{p,q}}  \lesssim \sum _{\left|\alpha \right|=[s]}\left\|D^{\alpha } f\right\| _{\dot{H}_{p,q}^{v}}.
\end{equation}
\end{proof}
\begin{lemma}[Fractional Product Rule in Lorentz spaces, \cite{CN16}]\label{lem 2.10.}
Let $0\le s\le1$, $1<p,\;p_{1},\;p_{2}, \;p_{3},\;\;p_{4} <\infty$ and $1\le q,\;q_{1},\;q_{2}, \;q_{3},\;\;q_{4} \le \infty$. Assume that
$$
\frac{1}{p} =\frac{1}{p_{1} }+\frac{1}{p_{2}}=\frac{1}{p_{3} }+\frac{1}{p_{4}},~\frac{1}{q} =\frac{1}{q_{1} }+\frac{1}{q_{2}}=\frac{1}{q_{3} }+\frac{1}{q_{4}}.
$$
Then we have
\begin{equation} \label{GrindEQ__2_1_}
\left\| fg\right\| _{\dot{H}_{p,q}^{s} } \lesssim \left\| f\right\| _{L^{p_{1},q_{1}}} \left\| g\right\| _{\dot{H}_{p_{2},q_{2}}^{s} } +\left\| f\right\| _{\dot{H}_{p_{3},q_{3}}^{s} } \left\| g\right\| _{L^{p_{4},q_{4}}} .
\end{equation}
\end{lemma}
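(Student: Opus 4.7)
The plan is to adapt the standard Bony paraproduct proof of the Kato--Ponce fractional Leibniz rule to the Lorentz setting, the key technical input being that Littlewood--Paley theory and vector-valued maximal inequalities extend from $L^{p}$ to $L^{p,q}$. First I would fix a smooth Littlewood--Paley decomposition $\{\Delta_{j}\}_{j\in\mathbb Z}$ with partial sums $S_{j}=\sum_{k\le j}\Delta_{k}$ and establish the equivalent norm
$$\|f\|_{\dot H_{p,q}^{s}} \sim \Bigl\|\Bigl(\sum_{j\in\mathbb Z}4^{js}|\Delta_{j}f|^{2}\Bigr)^{1/2}\Bigr\|_{L^{p,q}},$$
valid for $1<p<\infty$, $1\le q\le\infty$, $s\ge 0$. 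This follows exactly as in Lemma \ref{lem 2.9.}: the $L^{p}$-analogue is classical, and the general Marcinkiewicz interpolation theorem (Theorem 5.3.2 in \cite{BL76}) upgrades the associated Mihlin-type Fourier-multiplier bounds to Lorentz spaces.

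Next I would apply Bony's decomposition $fg=T_{f}g+T_{g}f+R(f,g)$ with $T_{f}g:=\sum_{j}S_{j-3}f\cdot\Delta_{j}g$ and $R(f,g):=\sum_{|j-k|\le 2}\Delta_{j}f\,\Delta_{k}g$. For the low--high paraproduct $T_{f}g$, each summand has Fourier support in an annulus of size $2^{j}$, so $\Delta_{\ell}T_{f}g$ is essentially $S_{\ell-3}f\cdot\Delta_{\ell}g$ (up to finitely many shifts) and is pointwise dominated by $Mf\cdot|\Delta_{\ell}g|$, where $M$ is the Hardy--Littlewood maximal operator. Inserting this into the square-function characterization and applying the H\"older inequality in Lorentz spaces (Lemma \ref{lem 2.4.}) together with the vector-valued Fefferman--Stein maximal inequality in Lorentz spaces gives
$$\|T_{f}g\|_{\dot H_{p,q}^{s}} \lesssim \|Mf\|_{L^{p_{1},q_{1}}}\Bigl\|\Bigl(\sum_{\ell}4^{\ell s}|\Delta_{\ell}g|^{2}\Bigr)^{1/2}\Bigr\|_{L^{p_{2},q_{2}}} \lesssim \|f\|_{L^{p_{1},q_{1}}}\|g\|_{\dot H_{p_{2},q_{2}}^{s}},$$
and symmetrically $\|T_{g}f\|_{\dot H_{p,q}^{s}}\lesssim\|f\|_{\dot H_{p_{3},q_{3}}^{s}}\|g\|_{L^{p_{4},q_{4}}}$. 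For the high--high remainder $R(f,g)$, the summands $\Delta_{j}f\,\Delta_{k}g$ are Fourier-supported in a \emph{ball} of radius $\lesssim 2^{j}$, so I would exploit $s\le 1$: writing $\Delta_{\ell}R(f,g)=\sum_{j\ge \ell-5}\Delta_{\ell}(\Delta_{j}f\,\Delta_{j}g)$, Bernstein's inequality produces a gain $2^{(\ell-j)s}$ that is summable. A Cauchy--Schwarz/Schur argument in the $j$-index, combined with H\"older in Lorentz spaces, bounds $\|R(f,g)\|_{\dot H_{p,q}^{s}}$ by either of the two mixed norms on the right-hand side.

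The main obstacle will be the Lorentz analogue of the Fefferman--Stein vector-valued maximal inequality
$$\Bigl\|\Bigl(\sum_{j}|Mf_{j}|^{2}\Bigr)^{1/2}\Bigr\|_{L^{p,q}} \lesssim \Bigl\|\Bigl(\sum_{j}|f_{j}|^{2}\Bigr)^{1/2}\Bigr\|_{L^{p,q}}, \qquad 1<p<\infty,\ 1\le q\le\infty,$$
which is not a direct corollary of the scalar case. I would obtain it either by bilinear real interpolation from the classical $L^{p}$ statement of Fefferman--Stein, or via extrapolation from the weighted norm inequalities for $M$ on Muckenhoupt $A_{p}$ weights; a mild technicality is that for $q<1$ the target space is only quasi-Banach, so the triangle inequality must be replaced by a $(p,q)$-analogue. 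Once this vector-valued bound is established, the three pieces of the paraproduct assemble into the desired estimate \eqref{GrindEQ__2_1_} by routine summation, and the argument is complete.
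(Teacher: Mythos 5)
Your proposal is essentially sound, but note first that the paper offers no proof of Lemma \ref{lem 2.10.} at all: the statement is simply quoted with a citation to \cite{CN16}, so there is no internal argument to compare against. What \cite{CN16} actually proves is the Kato--Ponce inequality on \emph{weighted} Lebesgue spaces over the full Muckenhoupt classes, and the intended route to the Lorentz statement \eqref{GrindEQ__2_1_} is Rubio de Francia extrapolation applied to that weighted (bilinear) estimate, which delivers the inequality on $L^{p,q}$ and indeed on general rearrangement-invariant spaces in one stroke. You mention extrapolation only as a device for the vector-valued Fefferman--Stein inequality; applied to the whole inequality it makes your entire paraproduct analysis in Lorentz spaces unnecessary. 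That said, your direct route is legitimate and self-contained: the Bony decomposition, the square-function characterization of $\dot H^{s}_{p,q}$ obtained by real interpolation of the $\ell^{2}$-valued Littlewood--Paley bounds (using $(L^{p_{0}}(\ell^{2}),L^{p_{1}}(\ell^{2}))_{\theta,q}=L^{p,q}(\ell^{2})$, which is in \cite{BL76}), and the Lorentz Fefferman--Stein inequality by the same vector-valued interpolation all go through, and you correctly isolate the latter as the one nontrivial input. Three small points to tighten: (i) in the low--high term the maximal function must be applied to the full product $S_{\ell-3}f\cdot\Delta_{\ell}g$ \emph{before} you factor out $Mf$, i.e.\ first use $|\Delta_{\ell}h|\lesssim Mh$ and Fefferman--Stein, and only then the pointwise bound $|S_{\ell-3}f|\lesssim Mf$ inside the square function --- the order you wrote conflates these steps; (ii) the Bernstein gain $2^{(\ell-j)s}$ in the remainder is summable only for $s>0$, so the case $s=0$ (where \eqref{GrindEQ__2_1_} is just the Lorentz H\"older inequality, Lemma \ref{lem 2.4.}) should be split off; and (iii) your worry about $q<1$ is moot since the lemma assumes $q\ge 1$.
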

\begin{lemma}[Fractional Chain Rule in Lorentz spaces, \cite{AT21}]\label{lem 2.11.}
Suppose $G\in C^{1} (\mathbb C, \mathbb C)$ and $0\le s\le1$. Then for $1<p,\;p_{1},\;p_{2} <\infty $, and $1\le q,\;q_{1},\; q_{2}< \infty $ satisfying
$$
\frac{1}{p} =\frac{1}{p_{1} } +\frac{1}{p_{2} },~\frac{1}{q} =\frac{1}{q_{1} } +\frac{1}{q_{2} },
$$
we have
\begin{equation} \label{GrindEQ__2_2_}
\left\| G(u)\right\| _{\dot{H}_{p,q}^{s} } \lesssim \left\| G'(u)\right\| _{L^{p_{1},q_{1}}} \left\|u\right\|_{\dot{H}_{p_{2},q_{2} }^{s} } .
\end{equation}
\end{lemma}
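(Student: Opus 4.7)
My approach splits on the value of $s$. For the endpoint $s=1$, Lemma~\ref{lem 2.9.} reduces the claim to estimating $\|D^\alpha G(u)\|_{L^{p,q}}$ for $|\alpha|=1$. Since $G \in C^1$, the classical chain rule gives $D^\alpha G(u) = G'(u)\,D^\alpha u$, and H\"older in Lorentz spaces (Lemma~\ref{lem 2.4.}) immediately yields $\|G(u)\|_{\dot H^1_{p,q}} \lesssim \|G'(u)\|_{L^{p_1,q_1}}\,\|u\|_{\dot H^1_{p_2,q_2}}$. The degenerate endpoint $s=0$ reduces to H\"older alone.

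For $0<s<1$, the plan is to work through an Aronszajn-Slobodeckij type characterization
\begin{equation*}
\|f\|_{\dot H^s_{p,q}} \;\sim\; \|D_s f\|_{L^{p,q}}, \qquad D_s f(x) := \left(\int_{\mathbb R^n}\frac{|f(x+y)-f(x)|^2}{|y|^{n+2s}}\,dy\right)^{1/2}
\end{equation*}
of the Sobolev-Lorentz seminorm. This identity is classical for Lebesgue norms and lifts to Lorentz norms by real interpolation, using $(L^{p_0},L^{p_1})_{\theta,q}=L^{p_\theta,q}$ together with the fact that $D_s$ defines a (sub-)linear square function bounded on each $L^r$. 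I would then combine it with the pointwise mean value bound
\begin{equation*}
|G(u(x+y))-G(u(x))| \le |u(x+y)-u(x)| \cdot \sup_{0\le t\le 1} \bigl|G'\bigl(u(x)+t(u(x+y)-u(x))\bigr)\bigr|,
\end{equation*}
and dominate the supremum, via the Christ-Weinstein trick, by the Hardy-Littlewood maximal function $\mathcal M(G'(u))$ evaluated at $x$ (any symmetric contribution at $x+y$ being absorbed after a change of variables). Inserting into the characterization and applying H\"older in Lorentz spaces together with the Lorentz-boundedness of $\mathcal M$ (again obtained by real interpolation of its Lebesgue bounds) then yields the desired estimate.

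\textbf{Main obstacle.} The delicate technical point is to establish the difference-quotient characterization of $\dot H^s_{p,q}$ simultaneously with the pointwise maximal-function domination along the segment $[u(x),u(x+y)]$: both require careful bookkeeping of the Lorentz indices, and the second is genuinely subtle when $G'$ is only continuous. An alternative, possibly cleaner route, is to fix $G$ and $u$, apply the classical Christ-Weinstein estimate in $L^p$ over a small range of exponents $p$, and then interpolate in the real method in $p_2$ with $p_1$ frozen so as to upgrade the conclusion directly to Sobolev-Lorentz spaces, sidestepping the difference characterization entirely.
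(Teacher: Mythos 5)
First, a point of comparison: the paper does not prove Lemma \ref{lem 2.11.} at all --- it is imported verbatim from \cite{AT21} --- so there is no in-paper argument to measure yours against, and your overall strategy (Christ--Weinstein transplanted to Lorentz spaces, with the $s=1$ endpoint handled via Lemma \ref{lem 2.9.} and H\"older, which is fine) is the natural one and in the spirit of the cited source. However, as written your proof has a genuine gap at its central step. The ``Christ--Weinstein trick'' of dominating $\sup_{0\le t\le1}\bigl|G'\bigl(u(x)+t(u(x+y)-u(x))\bigr)\bigr|$ by $\mathcal{M}(G'(u))(x)$ plus a symmetric term at $x+y$ is simply not available for a general $G\in C^{1}$: the segment in $\mathbb C$ joining $u(x)$ to $u(x+y)$ need not consist of values of $u$, so $|G'|$ on that segment is not controlled by any average of $G'(u)$. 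The step closes only under a majorization hypothesis such as $\sup_{0\le t\le1}|G'(tv+(1-t)w)|\lesssim \mu(v)+\mu(w)$ with $\mu\lesssim|G'|$ (true for the power-type nonlinearities $|G'(z)|\lesssim|z|^{\sigma}$ to which the paper actually applies the lemma), or under H\"older continuity of $G'$. You flag this yourself as ``genuinely subtle,'' but flagging it is not resolving it. (The same issue already makes the $s=0$ case false without $G(0)=0$: take $G$ a nonzero constant.)

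The second weak point is the characterization $\|f\|_{\dot H^{s}_{p,q}}\sim\|D_{s}f\|_{L^{p,q}}$ and its alleged proof ``by real interpolation.'' Interpolating the sublinear map $f\mapsto D_{s}f$ gives only the one-sided bound $\|D_{s}f\|_{L^{p,q}}\lesssim\|f\|_{\dot H^{s}_{p,q}}$; the reverse inequality --- the one you actually need to control $\|G(u)\|_{\dot H^{s}_{p,q}}$ --- is not of the form ``sublinear operator applied to $D_{s}f$'' and does not follow by interpolation. Moreover, even in the Lebesgue case Stein's $L^{2}$-based difference square function characterizes $\dot H^{s}_{p}$ only for $p>2n/(n+2s)$, strictly smaller than the range $1<p<\infty$ claimed in the lemma. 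The standard repair is to run the argument through the Littlewood--Paley characterization $\|f\|_{\dot H^{s}_{p,q}}\sim\bigl\|\bigl(\sum_{j}4^{js}|P_{j}f|^{2}\bigr)^{1/2}\bigr\|_{L^{p,q}}$, which does transfer to Lorentz norms in both directions because the square function and its left inverse are vector-valued \emph{linear} operators. Your alternative route of ``freezing $p_{1}$ and interpolating in $p_{2}$'' hits the same obstruction: $u\mapsto G(u)$ is nonlinear, so the real method does not apply to it directly.
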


We end this section with recalling the Strichartz estimates in Sobolev-Lorentz spaces. See \cite{AT21, KT98} for instance.
\begin{definition}\label{defn 2.12.}
\textnormal{A pair $(\gamma(p),p)$ is said to be Schr\"{o}dinger admissible if
\begin{equation} \label{GrindEQ__2_3_}
\left\{\begin{array}{l}{2\le p\le\frac{2n}{n-2},\;n\ge 3,}\\{2\le p<\infty,\;n=2,}\\{2\le p\le\infty,\;n=1,} \end{array}\right.
\end{equation}
and
\begin{equation} \label{GrindEQ__2_4_}
\frac{2}{\gamma (p)} =\frac{n}{2} -\frac{n}{p} .
\end{equation}}
\end{definition}
\begin{lemma}[Strichartz estimates in Sobolev-Lorentz spaces]\label{lem 2.13.}
Let $S(t)=e^{it\Delta } $ and $s\ge 0$. Then for any admissible pairs $(\gamma(p),p)$ and $(\gamma(r),r)$, we have
\begin{equation} \label{GrindEQ__2_5_}
\left\| S(t)\phi \right\| _{L^{\gamma(p)}(\mathbb R,\;\dot{H}_{p,2}^{s})} \lesssim\left\| \phi \right\| _{\dot{H}^{s} } ,
\end{equation}
\begin{equation} \label{GrindEQ__2_6_}
\left\|\int_{0}^{t}S(t-\tau)f(\tau)d\tau\right\|_{L^{\gamma(p)}(\mathbb R,\;\dot{H}_{p,2}^{s})}\lesssim\left\|f\right\|_{L^{\gamma(r)'}(\mathbb R,\;\dot{H}_{r',2}^{s})}.
\end{equation}
\end{lemma}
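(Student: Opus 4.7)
The plan is to reduce both estimates to the $s=0$ case and then invoke the classical Strichartz framework adapted to Lorentz spaces. The key observation is that the fractional differentiation operator $I^s = \mathscr{F}^{-1} |\xi|^s \mathscr{F}$ is a Fourier multiplier, so it commutes with the free Schr\"odinger propagator $S(t) = e^{it\Delta}$. Consequently,
\[
\|S(t)\phi\|_{L^{\gamma(p)}(\mathbb R,\dot H^{s}_{p,2})} = \|I^{s} S(t)\phi\|_{L^{\gamma(p)}(\mathbb R,L^{p,2})} = \|S(t) I^{s}\phi\|_{L^{\gamma(p)}(\mathbb R,L^{p,2})},
\]
and $\|\phi\|_{\dot H^{s}} = \|I^{s}\phi\|_{L^{2}}$; an analogous identity holds for the Duhamel term. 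Hence it is enough to establish the two inequalities for $s=0$, namely with $L^{p,2}$ and $L^{r',2}$ replacing $\dot H^{s}_{p,2}$ and $\dot H^{s}_{r',2}$.

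For the $s=0$ version, the starting point is the standard dispersive estimate $\|S(t)\phi\|_{L^{\infty}} \lesssim |t|^{-n/2}\|\phi\|_{L^{1}}$ together with the $L^{2}$ isometry $\|S(t)\phi\|_{L^{2}} = \|\phi\|_{L^{2}}$. Applying the Marcinkiewicz real interpolation theorem between these two endpoints yields, for each $2 \le p < \infty$ with $p \le 2n/(n-2)$ when $n \ge 3$,
\[
\|S(t)\phi\|_{L^{p,2}} \lesssim |t|^{-n(\tfrac{1}{2} - \tfrac{1}{p})}\|\phi\|_{L^{p',2}}.
\]
Combined with Lemma \ref{lem 2.5.} (Young's inequality in Lorentz spaces) and the standard $TT^{*}$ identity, this gives both \eqref{GrindEQ__2_5_} and \eqref{GrindEQ__2_6_} for every \emph{non-endpoint} admissible pair; the key technical input is that convolution with $|t|^{-n(1/2-1/p)}$ is bounded from $L^{\gamma(r)'}(\mathbb R)$ to $L^{\gamma(p)}(\mathbb R)$ precisely because $1 - n(\tfrac12 - \tfrac1p) = 2/\gamma(p)$ and the admissibility condition \eqref{GrindEQ__2_4_} matches the Hardy-Littlewood-Sobolev exponents, while the $L^{p,2}$ structure in the spatial norm is preserved throughout by the Lorentz-space convolution inequality.

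The main obstacle is the endpoint pair $\bigl(p,\gamma(p)\bigr) = \bigl(\tfrac{2n}{n-2},2\bigr)$ in dimension $n \ge 3$, where the Hardy-Littlewood-Sobolev argument degenerates. Here one invokes the Keel-Tao bilinear interpolation method: one dyadically decomposes the time interval and uses a bilinear real interpolation between $L^{1}$-$L^{\infty}$ and $L^{2}$-$L^{2}$ mapping properties of $S(t)$, which in fact produces the Lorentz norm $L^{p,2}$ in the spatial variable as the natural output (this is the content of the endpoint theorem in \cite{KT98}, and is precisely how the statement is recorded in \cite{AT21}). Once the endpoint case is in hand, interpolating with the trivial $L^{\infty}_{t}L^{2}_{x}$ estimate closes all remaining admissible pairs, and the reduction above then lifts the estimate to arbitrary $s \ge 0$.
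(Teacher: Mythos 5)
The paper does not actually prove Lemma \ref{lem 2.13.}; it simply records the statement and cites \cite{AT21, KT98}. Your sketch reconstructs the standard argument contained in those references: commuting the Fourier multiplier $I^{s}$ with $S(t)$ to reduce to $s=0$, real-interpolating the dispersive and $L^{2}$ bounds to get the $L^{p',2}\to L^{p,2}$ decay estimate, running $TT^{*}$ with Hardy--Littlewood--Sobolev for non-endpoint pairs, and invoking the Keel--Tao bilinear interpolation for the endpoint (which, as you correctly note, is exactly where the Lorentz exponent $2$ in the spatial norm comes from). So the architecture is right and consistent with what the paper relies on.

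One step is elided in a way that matters for how the lemma is actually used. The dispersive-plus-convolution argument you describe bounds $\|S(t-\tau)f(\tau)\|_{L^{p,2}}$ by $|t-\tau|^{-n(1/2-1/p)}\|f(\tau)\|_{L^{p',2}}$, so the time-convolution/HLS step only closes when the input space is $L^{r',2}$ with $r=p$; your claim that the kernel maps $L^{\gamma(r)'}(\mathbb R)\to L^{\gamma(p)}(\mathbb R)$ for a \emph{different} admissible pair $r$ does not follow from Young/HLS (the exponent identity forces $\gamma(r)=\gamma(p)$). The off-diagonal inhomogeneous estimate \eqref{GrindEQ__2_6_} --- which the paper genuinely needs, e.g.\ in \eqref{GrindEQ__3_30_} where the right-hand side lives in $L^{\gamma(\bar r)'}\dot H^{s}_{\bar r',2}$ while the left-hand side is measured in $L^{\gamma(r)}\dot H^{s}_{r,2}$ with $r\neq\bar r$ --- is obtained by composing the homogeneous estimate with its dual to handle the untruncated integral, and then applying the Christ--Kiselev lemma (valid since $\gamma(\bar r)'<\gamma(r)$ away from the double endpoint) or the full retarded Keel--Tao theorem to pass to $\int_{0}^{t}$. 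Adding that one sentence would make the sketch complete.
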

\section{Well-posedness}

In this section, we prove Theorem \ref{thm 1.7.}.
First of all, we estimates the nonlinearity $f(u)$ that behaves like $\lambda |u|^{\sigma } u$ with $\lambda \in \mathbb C$ in the fractional Sobolev-Lorentz spaces.
\begin{lemma}\label{lem 3.1.}
Let $1<p,\;r<\infty $, $s\ge 0$, and $\sigma>\max\{0,\;\left\lceil s\right\rceil-1\}$. Assume that $f\in C^{\left\lceil s\right\rceil } $ satisfies following condition:
\begin{equation} \label{GrindEQ__3_1_}
\left|f^{\left(k\right)} \left(z\right)\right|\lesssim \left|z\right|^{\sigma +1-k},
\end{equation}
for any $0\le k\le\left\lceil s\right\rceil $ and $z\in \mathbb C$. Suppose also that
\begin{equation} \label{GrindEQ__3_2_}
\frac{1}{p} =\sigma \left(\frac{1}{r} -\frac{s}{n} \right)+\frac{1}{r} , \;\frac{1}{r} -\frac{s}{n} >0.
\end{equation}
Then we have
\begin{equation} \label{GrindEQ__3_3_}
\left\| f(u)\right\| _{\dot{H}_{p,2}^{s} } \lesssim \left\| u\right\| _{\dot{H}_{r,2}^{s} }^{\sigma +1} .
\end{equation}
\end{lemma}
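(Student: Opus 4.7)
The plan is to induct on $[s]$, with $0 \le s < 1$ as the base case. For $s = 0$ the claim is elementary: $|f(u)| \lesssim |u|^{\sigma+1}$ combined with Lemma \ref{lem 2.2.}, the scaling identity (which reduces to $\frac{1}{p} = \frac{\sigma+1}{r}$), and the Lorentz-index monotonicity Lemma \ref{lem 2.3.} gives $\|f(u)\|_{L^{p,2}} \lesssim \|u\|_{L^{r,2}}^{\sigma+1}$. For $0 < s < 1$, I would invoke the Lorentz fractional chain rule Lemma \ref{lem 2.11.} with a splitting $\frac{1}{p} = \frac{1}{p_1} + \frac{1}{p_2}$, $\frac{1}{2} = \frac{1}{q_1} + \frac{1}{q_2}$, taking $p_2 = r$ and $\sigma p_1 = r^{\ast} := \frac{nr}{n-sr}$; this is admissible because $\frac{1}{r} - \frac{s}{n} > 0$, and the resulting $p_1$ matches exactly the hypothesis $\frac{1}{p} = \sigma(\frac{1}{r} - \frac{s}{n}) + \frac{1}{r}$. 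Lemma \ref{lem 2.2.} then converts $\|f'(u)\|_{L^{p_1,q_1}}$ into $\|u\|_{L^{r^{\ast},\sigma q_1}}^{\sigma}$, which the Sobolev embedding Lemma \ref{lem 2.8.} and the Lorentz embeddings Lemma \ref{lem 2.3.}, Lemma \ref{lem 2.7.}(a) dominate by $\|u\|_{\dot{H}^s_{r,2}}^{\sigma}$.

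For the inductive step $[s] \ge 1$, I would use Lemma \ref{lem 2.9.} to reduce to
$$\|f(u)\|_{\dot{H}^s_{p,2}} \sim \sum_{|\alpha|=1} \|D^\alpha f(u)\|_{\dot{H}^{s-1}_{p,2}}.$$
Writing $D^\alpha f(u) = \partial_z f(u)\,D^\alpha u + \partial_{\bar z}f(u)\,D^\alpha \bar u$ and applying the Lorentz fractional product rule Lemma \ref{lem 2.10.} at the level $\min\{s-1,1\}$ (iterated if needed), I would bound each piece by a sum of products of the form $\|f'(u)\|_{\dot{H}^{s-1}_{a,\cdot}}\|D^\alpha u\|_{L^{b,\cdot}}$ and $\|f'(u)\|_{L^{c,\cdot}}\|D^\alpha u\|_{\dot{H}^{s-1}_{d,\cdot}}$. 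The factor involving $f'(u)$ falls under the induction hypothesis applied to $f'$, which satisfies $|(f')^{(k)}(z)| \lesssim |z|^{\sigma-k}$ for $0 \le k \le \lceil s \rceil - 1 = \lceil s-1 \rceil$; the assumption $\sigma > \lceil s \rceil - 1$ translates at level $s-1$ into $\sigma - 0 > \lceil s - 1 \rceil - 1$, which is precisely the positivity needed to restart the recursion. The factor $\|D^\alpha u\|_{L^{b,\cdot}}$ or $\|D^\alpha u\|_{\dot{H}^{s-1}_{d,\cdot}}$ is then absorbed into $\|u\|_{\dot{H}^s_{r,2}}$ via Lemma \ref{lem 2.8.} and Lemma \ref{lem 2.9.}, after which the scaling hypothesis closes because the relation $\frac{1}{p} = \sigma(\frac{1}{r}-\frac{s}{n}) + \frac{1}{r}$ is linear in the derivative count.

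The main obstacle will be the bookkeeping of the Lorentz second indices. Neither Lemma \ref{lem 2.10.} nor Lemma \ref{lem 2.11.} admits a second index equal to $\infty$, yet a naive splitting to reach the target index $2$ forces one conjugate index to $\infty$. The workaround is to split $\frac{1}{2} = \frac{1}{2+\varepsilon} + \frac{\varepsilon}{2(2+\varepsilon)}$ for small $\varepsilon > 0$, carry out all the multilinear estimates with these slightly perturbed indices (keeping everything finite so that Lemmas \ref{lem 2.10.}--\ref{lem 2.11.} apply), and then recover the sharp index $2$ at the very last step via the monotonicity inclusions $\dot{H}^s_{r,2} \hookrightarrow \dot{H}^s_{r,q}$ (Lemma \ref{lem 2.7.}(a)) and $L^{p,q_1} \hookrightarrow L^{p,q_2}$ for $q_1 \le q_2$ (Lemma \ref{lem 2.3.}). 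Verifying that this perturbation is compatible with all of the Sobolev embeddings used throughout the induction — in particular that $\sigma q_1 \ge 2$ so Lemma \ref{lem 2.8.} can be applied on the $f'(u)$-factor — is the delicate technical point.
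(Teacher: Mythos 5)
Your base case ($s\le 1$) is essentially the paper's own argument: fractional chain rule in Lorentz spaces, the splitting of the second index $\tfrac12$ into two finite pieces, and recovery of the index $2$ at the end via Lemma \ref{lem 2.3.} and the embedding $\dot{H}^{s}_{r,2}\subset L^{a,2}$. (Minor correction: Lemma \ref{lem 2.10.} \emph{does} admit second indices equal to $\infty$; only the chain rule Lemma \ref{lem 2.11.} excludes them. Your workaround is nevertheless the right one and is exactly what the paper does, with the splitting $\tfrac12=\tfrac{\sigma}{q_1}+\tfrac{1}{q_2}$, $2<q_1,q_2<\infty$.)

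The inductive step is where there is a genuine gap. You reduce to $\sum_{|\alpha|=1}\|D^{\alpha}f(u)\|_{\dot{H}^{s-1}_{p,2}}$ and then propose to apply the fractional product rule to $f'(u)\,D^{\alpha}u$ ``at the level $\min\{s-1,1\}$ (iterated if needed).'' But Lemma \ref{lem 2.10.} is only stated (and only true in this simple two-term form) for regularity $0\le s\le 1$; when $[s]\ge 2$ you need a product estimate in $\dot{H}^{s-1}_{p,2}$ with $s-1>1$, and no such tool is available among the quoted lemmas. ``Iterating'' the product rule properly means applying Lemma \ref{lem 2.9.} again to $f'(u)\,D^{\alpha}u$, expanding by Leibniz, and re-expanding $f'(u)$ by Fa\`a di Bruno --- at which point you have reconstructed, term by term, the paper's own decomposition. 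The paper is organized precisely to avoid this issue: it applies Lemma \ref{lem 2.9.} \emph{once} with $|\alpha|=[s]$, so that the residual regularity is $v=s-[s]\in[0,1)$, expands $D^{\alpha}f(u)=\sum_{q}\sum_{\Lambda^{q}_{\alpha}}C_{\alpha,q}f^{(q)}(u)\prod_{i}D^{\alpha_i}u$, and then only ever invokes the product rule (Lemma \ref{lem 2.10.}) and chain rule (Lemma \ref{lem 2.11.}) at order $v\le 1$. Two further small points: Lemma \ref{lem 2.9.} as stated gives the equivalence only for $|\alpha|=[s]$ with remainder $v=s-[s]$, not for $|\alpha|=1$ with remainder $s-1$, so your first reduction needs a (provable, but unproved) variant; and your exponent bookkeeping for $f'$ ($\sigma-1>\lceil s-1\rceil-1$, and $\sigma>0$ when $s=1$) is correct, so the induction hypothesis itself is consistent --- the missing ingredient is solely the high-order Leibniz rule. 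To repair the proof, either establish that rule separately or adopt the paper's single-pass Fa\`a di Bruno reduction.
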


\begin{proof}
First, we consider the case $s\le 1$. It follows from \eqref{GrindEQ__3_1_}, Lemmas \ref{lem 2.2.}--\ref{lem 2.4.}, \ref{lem 2.8.} and \ref{lem 2.11.} that
\begin{equation}\nonumber
\left\| f(u)\right\| _{\dot{H}_{p,2}^{s} }\lesssim \left\| u\right\| _{L^{a,q_{1}}}^{\sigma} \left\|u\right\|_{\dot{H}_{r,q_{2} }^{s} }\lesssim \left\| u\right\| _{L^{a,2}}^{\sigma} \left\|u\right\|_{\dot{H}_{r,2 }^{s} }\lesssim  \left\| u\right\| _{\dot{H}_{r,2 }^{s} }^{\sigma+1},
\end{equation}
where
\begin{equation}\label{GrindEQ__3_4_}
\frac{1}{a}=\frac{1}{r}-\frac{s}{n},~
\frac{1}{2}=\frac{\sigma}{q_{1}}+\frac{1}{q_{2}},~ 2<q_{1},\;q_{2}<\infty.
\end{equation}
Next, we consider the case $s>1$. Lemma \ref{lem 2.9.} yields that
\[\left\| f(u)\right\| _{\dot{H}_{p,2}^{s} } \lesssim \sum _{\left|\alpha \right|=[s]}\left\| D^{\alpha } f(u)\right\| _{\dot{H}_{p,2}^{v} }  ,\]
where $v=s-[s]$. Without loss of generality and for simplicity, we assume that $f$ is a function of a real variable. It follows from the Leibniz rule of derivatives that
\begin{equation} \label{GrindEQ__3_5_}
D^{\alpha } f(u)=\sum _{q=1}^{\left|\alpha \right|}\sum _{\Lambda _{\alpha }^{q} }C_{\alpha ,\;q} f^{(q)} (u)\prod _{i=1}^{q}D^{\alpha _{i} } u,
\end{equation}
where $\Lambda _{\alpha }^{q} =\left(\alpha _{1} +\cdots +\alpha _{q} =\alpha ,\;\left|\alpha _{i} \right|\ge 1\right)$. Hence it suffices to show that
\begin{equation} \label{GrindEQ__3_6_}
\left\| f^{(q)} (u)\prod _{i=1}^{q}D^{\alpha _{i} } u \right\| _{\dot{H}_{p,2}^{v} } \lesssim \left\| u\right\| _{\dot{H}_{r,2}^{s} }^{\sigma +1} ,
\end{equation}
where $\left|\alpha _{1} \right|+\cdots +\left|\alpha _{q} \right|=[s]$, $\left|\alpha _{i} \right|\ge 1$, $[s]\ge q\ge 1$ and $v=s-[s]$.
In view of \eqref{GrindEQ__3_2_}, we have
\begin{equation} \label{GrindEQ__3_7_}
\frac{1}{p} =\sigma \left(\frac{1}{r} -\frac{s}{n} \right)+\frac{1}{r} =\frac{\sigma +1-q}{a}+\sum _{i=1}^{q}\frac{1}{a_{i}} +\frac{v}{n},
\end{equation}
where $a$ is given in \eqref{GrindEQ__3_4_} and
\begin{equation} \label{GrindEQ__3_8_}
\frac{1}{a_{i} }=\frac{1}{r} -\frac{s-\left|\alpha _{i} \right|}{n}.
\end{equation}

If $s\in \mathbb N$, then it follows from \eqref{GrindEQ__3_1_}, \eqref{GrindEQ__3_7_},  Lemmas \ref{lem 2.2.}--\ref{lem 2.4.}, \ref{lem 2.7.} and \ref{lem 2.9.} that
\footnote[2]{We assume that $\prod _{i=2}^{1}a_{i}=1$.}
\begin{eqnarray}\begin{split}\nonumber
\left\| f^{(q)} (u)\prod _{i=1}^{q}D^{\alpha _{i} } u \right\| _{L^{p,2}} &\lesssim \left\| u\right\| _{L^{a,q_{1}}}^{\sigma +1-q}\left\| D^{\alpha _{1} } u\right\| _{L^{a_{1},{q_{2}}} } \prod _{i=2}^{q}\left\| D^{\alpha _{i} } u\right\| _{L^{a_{i},q_{1}} } \\
&\lesssim \left\| u\right\| _{L^{a,2}}^{\sigma +1-q} \prod _{i=1}^{q}\left\| u\right\| _{\dot{H}_{a_{i},2}^{\left|\alpha _{i} \right|} }
\lesssim \left\| u\right\| _{\dot{H}_{r,2}^{s} }^{\sigma +1},
\end{split}\end{eqnarray}
where $a$, $a_{i}$, $q_{1}$, $q_{2}$ are given in \eqref{GrindEQ__3_4_} and \eqref{GrindEQ__3_8_}.

Next, we consider the case $s\notin \mathbb N$. In this case, we have $q+1\le \left\lceil s\right\rceil < \sigma+1$. Lemma \ref{lem 2.10.} (fractional product rule) yields that
\begin{eqnarray}\begin{split}\nonumber
\left\| f^{(q)} (u)\prod _{i=1}^{q}D^{\alpha _{i} } u \right\| _{\dot{H}_{p,2}^{v} } &\lesssim \left\| f^{(q)} (u)\right\| _{\dot{H}_{p_{1},q_{3} }^{v} } \left\| \prod _{i=1}^{q}D^{\alpha _{i} } u \right\| _{L^{r_{1},q_{4}}} +\left\| f^{(q)} (u)\right\| _{L^{p_{2},q_{3}} } \left\| \prod _{i=1}^{q}D^{\alpha _{i} } u \right\| _{\dot{H}_{r_{2},q_{4}}^{v} }  \\
&\equiv A_{1} +A_{2} ,
\end{split}\end{eqnarray}
where
\begin{equation} \label{GrindEQ__3_9_}
\frac{1}{p_{1} } =\frac{\sigma +1-q}{a} +\frac{v}{n} , \;\frac{1}{r_{1} } =\sum _{i=1}^{q}\frac{1}{a_{i} }  ,
\end{equation}
\begin{equation} \label{GrindEQ__3_10_}
\frac{1}{p_{2} } =\frac{\sigma +1-q}{a} , \;\frac{1}{r_{2} } =\sum _{i=1}^{q}\frac{1}{a_{i} }  +\frac{v}{n},
\end{equation}
\begin{equation} \label{GrindEQ__3_11_}
\frac{1}{q_{3}}=\frac{\sigma+1-q}{q_{1}},~\frac{1}{q_{4}}=\frac{1}{2}-\frac{1}{q_{3}}
=\frac{1}{q_{2}}+\frac{q-1}{q_{1}}.
\end{equation}
First, we estimate $A_{1}$.
It follows from \eqref{GrindEQ__3_9_}, \eqref{GrindEQ__3_11_}, Lemmas \ref{lem 2.3.}, \ref{lem 2.4.}, \ref{lem 2.8.} and \ref{lem 2.9.} that
\begin{equation} \label{GrindEQ__3_12_}
\left\| \prod _{i=1}^{q}D^{\alpha _{i} } u \right\| _{L^{r_{1},q_{4} }} \le \left\| D^{\alpha _{1} } u\right\| _{a_{1},q_{2}}\prod _{i=2}^{q}\left\| D^{\alpha _{i} } u\right\| _{a_{i},q_{1} }  \le \prod _{i=1}^{q}\left\| u\right\| _{\dot{H}_{a_{i},2 }^{\left|\alpha _{i} \right|} }  \lesssim \left\| u\right\| _{\dot{H}_{r,2}^{s} }^{q} .
\end{equation}
 Since $q<\sigma$, it also follows from Lemma \ref{lem 2.11.} (fractional chain rule) and \eqref{GrindEQ__3_9_} that
\begin{equation} \label{GrindEQ__3_13_}
\left\| f^{(q)} (u)\right\| _{\dot{H}_{p_{1},q_{3} }^{v} } \lesssim \left\| f^{\left(q+1\right)} (u)\right\| _{L^{p_{3},q_{5}} } \left\| u\right\| _{\dot{H}_{p_{4},q_{1}}^{v} } ,
\end{equation}
where $\frac{1}{p_{3} } =\frac{\sigma -q}{a} $, $\frac{1}{p_{4} } =\frac{1}{r} -\frac{[s]}{n} $ and $\frac{1}{q_{5}}=\frac{\sigma-q}{q_{1}}$.
Noticing that $\frac{1}{p_{4}} -\frac{v}{n} =\frac{1}{r} -\frac{s}{n}$, Lemmas \ref{lem 2.3.} and \ref{lem 2.8.} yield that $\dot{H}_{r,2}^{s} \subset \dot{H}_{p_{4},q_{1} }^{v} $. Hence, we immediately get
\begin{equation} \label{GrindEQ__3_14_}
\left\| f^{(q)} (u)\right\| _{\dot{H}_{p_{1},q_{3} }^{v} } \lesssim \left\| u\right\| _{L^{a,q_{1}}}^{\sigma -q} \left\| u\right\| _{\dot{H}_{p_{4},q_{1} }^{v} } \lesssim \left\| u\right\| _{\dot{H}_{r,2}^{s} }^{\sigma -q+1}.
\end{equation}
\eqref{GrindEQ__3_12_} and \eqref{GrindEQ__3_14_} yield that
\begin{equation} \label{GrindEQ__3_15_}
A_{1} =\left\| f^{(q)} (u)\right\| _{\dot{H}_{p_{1},q_{3} }^{v} } \left\| \prod _{i=1}^{q}D^{\alpha _{i} } u \right\| _{L^{r_{1},{q_{4}} }} \lesssim \left\| u\right\| _{\dot{H}_{r,2}^{s} }^{\sigma +1} .
\end{equation}
Next, we estimate $A_{2}$.
It follows from \eqref{GrindEQ__3_1_}, Lemma \ref{lem 2.2.} and the embedding $\dot{H}_{r,2}^{s} \subset L^{a,q_{1}} $ that
\begin{equation} \label{GrindEQ__3_16_}
\left\| f^{(q)} (u)\right\| _{L^{p_{2},q_{3}}}\lesssim \left\| u\right\| _{L^{a,q_{1}}}^{\sigma +1-q} \lesssim \left\| u\right\| _{\dot{H}_{r,2}^{s} }^{\sigma +1-q} .
\end{equation}
If $q=1$, then we can see that $r_{2} =r$, $\left|\alpha _{1} \right|=[s]$ and $q_{4}=q_{2}>2$. Hence it follows from Lemmas \ref{lem 2.7.} and \ref{lem 2.9.} that
\begin{equation} \nonumber
\left\| D^{\alpha _{1} } u\right\| _{\dot{H}_{r_{2},q_{4} }^{v} } \lesssim \left\| u\right\| _{\dot{H}_{r,2}^{s} } .
\end{equation}
We consider the case $q>1$. For $1\le k\le q$, putting $\frac{1}{\tilde{a}_{k} } :=\frac{1}{a_{k} } +\frac{v}{n} $, it follows from \eqref{GrindEQ__3_8_} and \eqref{GrindEQ__3_10_} that
\begin{equation} \label{GrindEQ__3_17_}
\frac{1}{\tilde{a}_{k} } =\frac{1}{r} -\frac{s-\left|\alpha _{k} \right|-v}{n} , \;\frac{1}{r_{2} } =\sum _{i\in I_{k} }\frac{1}{a_{i} }  +\frac{1}{\tilde{a}_{k} } ,
\end{equation}
where $I_{k} =\left\{i\in \mathbb N;\;1\le i\le q,\;i\ne k\right\}$.
\noindent We can see that $\tilde{a}_{k} >r>1$ and $\dot{H}_{r,2}^{s} \subset \dot{H}_{\tilde{a}_{k},q_{2} }^{\left|\alpha _{k} \right|+v} $, since $s>\left|\alpha _{k} \right|+v$.
Thus using \eqref{GrindEQ__3_11_}, \eqref{GrindEQ__3_17_} and Lemma \ref{lem 2.10.}, we have
\begin{equation}\nonumber
\left\| \prod _{i=1}^{q}D^{\alpha _{i} } u \right\| _{\dot{H}_{r_{2},q_{4}}^{v} } \lesssim \sum _{k=1}^{q}\left(\left\| D^{\alpha _{k} } u\right\| _{\dot{H}_{\tilde{a}_{k},q_{2} }^{v} } \prod _{i\in I_{k} }\left\| D^{\alpha _{i} } u_{i} \right\| _{L^{a_{i},q_{1}} }  \right)\lesssim \left\| u\right\| _{\dot{H}_{r,2}^{s} }^{q} ,
\end{equation}
where the last inequality follows from $\dot{H}_{r,2}^{s} \subset \dot{H}_{a_{i},q_{1} }^{\left|\alpha _{i}\right|} $ and $\dot{H}_{r,2}^{s} \subset \dot{H}_{\tilde{a}_{k},q_{2} }^{\left|\alpha _{k} \right|+v} $. Hence, for any $1\le q\le [s]$, we have
\begin{equation} \label{GrindEQ__3_18_}
\left\| \prod _{i=1}^{q}D^{\alpha _{i} } u \right\| _{\dot{H}_{r_{2},q_{4}}^{v} }
\lesssim \left\| u\right\| _{\dot{H}_{r,2}^{s} }^{q} ,
\end{equation}
\eqref{GrindEQ__3_16_} and \eqref{GrindEQ__3_18_} imply that
\begin{equation} \label{GrindEQ__3_19_}
A_{2} =\left\| f^{(q)} (u)\right\| _{L^{p_{2},q_{3}} } \left\| \prod _{i=1}^{q}D^{\alpha _{i} } u \right\| _{\dot{H}_{r_{2},q_{4}}^{v}}\lesssim \left\| u\right\| _{\dot{H}_{r,2}^{s} }^{\sigma +1} .
\end{equation}
In view of \eqref{GrindEQ__3_15_} and \eqref{GrindEQ__3_19_}, we have
\[\left\| f^{(q)} (u)\prod _{i=1}^{q}D^{\alpha _{i} } u \right\| _{\dot{H}_{p,2}^{v} } \lesssim A_{1} +A_{2} \lesssim \left\| u\right\| _{\dot{H}_{r,2}^{s} }^{\sigma +1} ,\]
this completes the proof.
\end{proof}
\begin{remark}[\cite{G17}]\label{rem 3.2.}
\textnormal{Let $b>0$, $s\ge 0$ and $b+s<n$. Then we have $I^{s}(|x|^{-b})=C_{n,b}|x|^{-b-s}$.}
\end{remark}
Using Lemma \ref{lem 3.1.} and Remark \ref{rem 3.2.}, we establish the estimates of the nonlinearity $|x|^{-b}f(u)$ in the fractional Sobolev-Lorentz spaces.
\begin{lemma}\label{lem 3.3.}
Let $1<p,\;r<\infty $, $b>0$, $s\ge 0$, $b+s<n$ and $\sigma>\max \left\{0,\;\left\lceil
s\right\rceil -1\right\}$.
Assume that $f\in C^{\left\lceil s\right\rceil } $ satisfies \eqref{GrindEQ__3_1_}
for any $0\le k\le\left\lceil s\right\rceil $ and $z\in \mathbb C$. Suppose also that
\begin{equation} \nonumber
\frac{1}{p} =\sigma \left(\frac{1}{r} -\frac{s}{n} \right)+\frac{1}{r}+\frac{b}{n}, ~\frac{1}{r} -\frac{s}{n} >0.
\end{equation}
Then we have
\begin{equation}\label{GrindEQ__3_20_}
\left\| |x|^{-b}f(u)\right\| _{\dot{H}_{p,2}^{s} } \lesssim \left\| u\right\| _{\dot{H}_{r,2}^{s} }^{\sigma +1},
\end{equation}
\begin{equation}\label{GrindEQ__3_21_}
\left\| |x|^{-b}|u|^{\sigma}v\right\| _{L^{p,2}} \lesssim \left\| u\right\| _{\dot{H}_{r,2}^{s} }^{\sigma}\left\| v\right\| _{L^{r,2}}.
\end{equation}
\end{lemma}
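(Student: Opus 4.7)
The plan is to dispatch \eqref{GrindEQ__3_21_} by direct H\"older estimation in Lorentz spaces and Sobolev-Lorentz embedding, and to derive \eqref{GrindEQ__3_20_} by combining the fractional product rule (Lemma~\ref{lem 2.10.}) with Lemma~\ref{lem 3.1.} when $s\le 1$, then extending to general $s$ via the Leibniz-type decomposition already used in the proof of Lemma~\ref{lem 3.1.}. For \eqref{GrindEQ__3_21_}, I set $\frac{1}{p_{1}}=\sigma\left(\frac{1}{r}-\frac{s}{n}\right)$ and apply Lemma~\ref{lem 2.4.} twice to obtain
\[
\bigl\||x|^{-b}|u|^{\sigma}v\bigr\|_{L^{p,2}} \lesssim \bigl\||x|^{-b}\bigr\|_{L^{n/b,\infty}}\,\bigl\||u|^{\sigma}\bigr\|_{L^{p_{1},\infty}}\,\|v\|_{L^{r,2}};
\]
Lemma~\ref{lem 2.1.} gives finiteness of the first factor, Lemma~\ref{lem 2.2.} identifies the middle factor with $\|u\|_{L^{\sigma p_{1},\infty}}^{\sigma}$, and Lemma~\ref{lem 2.8.} combined with Lemma~\ref{lem 2.3.} bounds this by $\|u\|_{\dot{H}^{s}_{r,2}}^{\sigma}$. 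The identity $\frac{1}{p}=\frac{b}{n}+\frac{1}{p_{1}}+\frac{1}{r}$ matches the hypothesis on $p$.

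For \eqref{GrindEQ__3_20_} in the range $0\le s\le 1$, I would apply Lemma~\ref{lem 2.10.} with the splitting $(|x|^{-b},f(u))$, obtaining
\[
\bigl\||x|^{-b}f(u)\bigr\|_{\dot{H}^{s}_{p,2}} \lesssim \bigl\||x|^{-b}\bigr\|_{L^{n/b,\infty}}\|f(u)\|_{\dot{H}^{s}_{p_{0},2}} + \bigl\||x|^{-b}\bigr\|_{\dot{H}^{s}_{n/(b+s),\infty}}\|f(u)\|_{L^{p_{2},2}},
\]
with $\frac{1}{p_{0}}=\frac{1}{p}-\frac{b}{n}$ and $\frac{1}{p_{2}}=\frac{1}{p}-\frac{b+s}{n}$. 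The first term is controlled by $\|u\|_{\dot{H}^{s}_{r,2}}^{\sigma+1}$ via Lemma~\ref{lem 3.1.}, since by construction $\frac{1}{p_{0}}=\sigma(\frac{1}{r}-\frac{s}{n})+\frac{1}{r}$. For the second term, Remark~\ref{rem 3.2.} and Lemma~\ref{lem 2.1.} give the finiteness of $\||x|^{-b}\|_{\dot{H}^{s}_{n/(b+s),\infty}}$ (this is where $b+s<n$ enters), while $\|f(u)\|_{L^{p_{2},2}}$ is handled via $|f(u)|\lesssim|u|^{\sigma+1}$, Lemma~\ref{lem 2.2.}, and the same Sobolev-Lorentz embedding used for \eqref{GrindEQ__3_21_}, noting that $\frac{1}{(\sigma+1)p_{2}}=\frac{1}{r}-\frac{s}{n}$.

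For $s>1$, which is the main technical obstacle, I would invoke Lemma~\ref{lem 2.9.} to reduce to bounding $\|D^{\alpha}(|x|^{-b}f(u))\|_{\dot{H}^{v}_{p,2}}$ with $|\alpha|=[s]$ and $v=s-[s]\in[0,1)$. The Leibniz rule then redistributes derivatives between the weight, on which $|D^{\beta}(|x|^{-b})|\lesssim|x|^{-b-|\beta|}$ away from the origin, and $f(u)$, which is expanded as in \eqref{GrindEQ__3_5_}; each resulting term of the form $|x|^{-b-|\beta|}f^{(q)}(u)\prod_{i} D^{\alpha_{i}}u$ is then controlled by combining Lemma~\ref{lem 2.10.} on the fractional remainder $v$ with the Lorentz-exponent bookkeeping and the fractional chain rule (Lemma~\ref{lem 2.11.}) already calibrated in the proof of Lemma~\ref{lem 3.1.}. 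The delicate point is to choose, in each subcase of the Leibniz expansion, Lorentz exponents in $(1,\infty)$ that simultaneously satisfy all balance identities after the scaling shift forced by the factor $|x|^{-b-|\beta|}$ and that remain compatible with the Sobolev-Lorentz embeddings needed to absorb everything into $\|u\|_{\dot{H}^{s}_{r,2}}^{\sigma+1}$; the assumption $b+s<n$ is precisely what guarantees this scaling compatibility.
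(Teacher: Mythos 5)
Your treatment of \eqref{GrindEQ__3_21_} and of \eqref{GrindEQ__3_20_} in the range $0\le s\le 1$ is correct and coincides with the paper's argument: a double H\"older application with the weight placed in $L^{n/b,\infty}$ for the first estimate, and a single application of the fractional product rule with the splitting $(|x|^{-b},f(u))$ for the second, with the hypothesis $b+s<n$ entering exactly where you say it does (finiteness of $\||x|^{-b}\|_{\dot H^{s}_{n/(b+s),\infty}}$ via Remark~\ref{rem 3.2.} and Lemma~\ref{lem 2.1.}).

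The gap is in the case $s>1$, which you yourself call the main technical obstacle and then leave as an outline. Your plan is to push the weight $|x|^{-b-|\beta|}$ into the Fa\`{a} di Bruno expansion of $D^{\alpha''}(f(u))$ and redo, term by term, the exponent bookkeeping of Lemma~\ref{lem 3.1.} with every Lebesgue index shifted by $(b+|\beta|)/n$; you explicitly defer the verification that admissible Lorentz exponents exist in every subcase. That verification is the entire content of the step, so as written the proof is incomplete precisely where it matters. It is also unnecessary: the paper never re-enters the nonlinearity. After Lemma~\ref{lem 2.9.} and the Leibniz rule, it applies the fractional product rule once to each product $D^{\alpha'}(|x|^{-b})\,D^{\alpha''}(f(u))$, producing only norms of the form $\||x|^{-b}\|_{\dot H^{k+v}_{p_k,\infty}}\|f(u)\|_{\dot H^{[s]-k}_{r_k,2}}$ and $\||x|^{-b}\|_{\dot H^{k}_{\bar p_k,\infty}}\|f(u)\|_{\dot H^{s-k}_{\bar r_k,2}}$ as in \eqref{GrindEQ__3_22_}; with $\frac{1}{\bar p}=\sigma\left(\frac{1}{r}-\frac{s}{n}\right)+\frac{1}{r}$, all of these norms of $f(u)$ are dominated by the single quantity $\|f(u)\|_{\dot H^{s}_{\bar p,2}}$ through the embedding of Lemma~\ref{lem 2.8.}, and Lemma~\ref{lem 3.1.} is then quoted once as a black box. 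If you intend to keep your route, you must actually exhibit the shifted exponents and second Lorentz indices for each pair $(q,\beta)$ and check that they lie in the admissible ranges; otherwise, factor the weight out of $f(u)$ first as above and the $s>1$ case reduces to a few lines.
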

\begin{proof}
It follows from Lemma \ref{lem 2.9.} and Lemma \ref{lem 2.10.} (fractional product rule) that
\begin{eqnarray}\begin{split} \label{GrindEQ__3_22_}
\left\| |x|^{-b}f(u)\right\| _{\dot{H}_{p,2}^{s} } &\lesssim \sum _{\left|\alpha \right|=[s]}\left\| D^{\alpha } \left(|x|^{-b}f(u)\right)\right\| _{\dot{H}_{p,2}^{v}}
\lesssim \sum_{\left|\alpha'\right|+\left|\alpha''\right|=[s]}\left\| D^{\alpha'} (|x|^{-b})D^{\alpha''} \left(f(u)\right)\right\| _{\dot{H}_{p,2}^{v}}\\
&\lesssim\sum_{k=0}^{[s]}\left(\left\||x|^{-b}\right\|_{\dot{H}_{p_{k},\infty}^{k+v}}
\left\|f(u)\right\|_{\dot{H}_{r_{k},2}^{[s]-k}}+
\left\||x|^{-b}\right\|_{\dot{H}_{\bar{p}_{k},\infty}^{k}}
\left\|f(u)\right\|_{\dot{H}_{\bar{r}_{k},2}^{s-k}}\right)
,
\end{split}\end{eqnarray}
where $v=s-[s]$ and
\begin{equation}\label{GrindEQ__3_23_}
\frac{1}{p_{k}}=\frac{b+k+v}{n},~\frac{1}{r_{k}}=\frac{1}{p}-\frac{1}{p_{k}},~
\frac{1}{\bar{p}_{k}}=\frac{b+k}{n},~\frac{1}{\bar{r}_{k}}=\frac{1}{p}-\frac{1}{\bar{p}_{k}}.
\end{equation}
Putting
\begin{equation} \label{GrindEQ__3_24_}
\frac{1}{\bar{p}} =\sigma \left(\frac{1}{r} -\frac{s}{n} \right)+\frac{1}{r} ,
\end{equation}
it follows \eqref{GrindEQ__3_23_} that
\begin{equation}\label{GrindEQ__3_25_}
\frac{1}{\bar{p}}=\frac{1}{r_{k}}+\frac{s-([s]-k)}{n}
=\frac{1}{\bar{r}_{k}}+\frac{s-(s-k)}{n}.
\end{equation}
By \eqref{GrindEQ__3_25_} and Lemma \ref{lem 2.8.}, we have the embeddings:
\begin{equation}\label{GrindEQ__3_26_}
\dot{H}_{\bar{p},2}^{s}\subset \dot{H}_{r_{k},2}^{[s]-k},~\dot{H}_{\bar{p},2}^{s}\subset \dot{H}_{\bar{r}_{k},2}^{s-k}.
\end{equation}
Meanwhile, using \eqref{GrindEQ__3_23_}, Remark \ref{rem 3.2.} and Lemma \ref{lem 2.1.}, we have
\begin{equation}\label{GrindEQ__3_27_}
\left\||x|^{-b}\right\|_{\dot{H}_{p_{k},\infty}^{k+v}}<\infty,~
\left\||x|^{-b}\right\|_{\dot{H}_{\bar{p}_{k},\infty}^{k}}<\infty,
\end{equation}
Using \eqref{GrindEQ__3_22_}, \eqref{GrindEQ__3_24_}, \eqref{GrindEQ__3_26_}, \eqref{GrindEQ__3_27_} and Lemma \ref{lem 3.1.}, we have \eqref{GrindEQ__3_20_}. We also have
\begin{equation}\nonumber
\left\| |x|^{-b}|u|^{\sigma}v\right\| _{L^{p,2}}\lesssim \left\| |x|^{-b}\right\| _{L^{n/b,\infty}}\left\||u|^{\sigma}v\right\|_{L^{\bar{p},2}}\lesssim \left\| u\right\| _{L^{a,2}}^{\sigma}\left\| v\right\| _{L^{r,2}}\lesssim\left\| u\right\| _{\dot{H}_{r,2}^{s} }^{\sigma}\left\| v\right\| _{L^{r,2}},
\end{equation}
where $\frac{1}{a}=\frac{1}{r}-\frac{s}{n}$. This completes the proof.
\end{proof}
\begin{remark}\label{rem 3.4.}
\textnormal{If $f(z)$ is a polynomial in $z$ and $\bar{z}$ satisfying $1<\deg (f)=1+\sigma $, we can see that the assumption $\sigma>\left\lceil s\right\rceil -1$ can be removed in Lemma \ref{lem 3.1.} and Lemma \ref{lem 3.3.}.}
\end{remark}
Now we can prove Theorem \ref{thm 1.7.} by using Lemma \ref{lem 2.13.} (Strichartz estimates) and Lemma \ref{lem 3.3.}.
\begin{proof}[{\bf Proof of Theorem \ref{thm 1.7.}}]
Since the proof is standard, we only give the proof of the local well-posedness for later use in the next section. See, for example, Section 4.9 of \cite{C03} or Section 4.3 of \cite{WHHG11}.

If $n\ge 3$, we put $\bar{r}=\frac{2n}{n-2}$, $r=\frac{2n\sigma +2n}{n+2+2\sigma s -2b}$ as in \eqref{GrindEQ__1_12_}. We can easily see that $(\gamma(\bar{r}),\bar{r})$ and $(\gamma(r),r)$ are admissible. We can also see that $\frac{1}{r}>\frac{s}{n}$ is equivalent to $b<1+\frac{n-2s}{2}$. Furthermore, we have
\begin{equation} \label{GrindEQ__3_28_}
\frac{1}{\bar{r}'} =\sigma \left(\frac{1}{r} -\frac{s}{n} \right)+\frac{1}{r} +\frac{b}{n},~\frac{1}{r}>\frac{s}{n}.
\end{equation}

Next, we choose admissible pairs $(\gamma(\bar{r}),\bar{r})$ and $(\gamma(r),r)$ satisfying \eqref{GrindEQ__3_28_} in the case $n\le 2$.
Since $b<n-s$, we can take $\varepsilon>0$ sufficiently small such that $\varepsilon<\min\{n-s-b, \frac{n}{2}\}$.
Putting $r=\frac{\sigma n+n}{\sigma s+n-b-\varepsilon}$, we have $r>2$. We can also see that $\frac{1}{r}>\frac{s}{n}$ is equivalent to $\varepsilon<n-s-b$. In view of \eqref{GrindEQ__3_28_}, we we can see that $\bar{r}>2$ is equivalent to $\varepsilon<\frac{n}{2}$.
Furthermore, we have
\begin{equation} \label{GrindEQ__3_29_}
\frac{1}{\gamma (\bar{r})'} =\frac{\sigma +1}{\gamma (r)},
\end{equation}
for any $n\in \mathbb N$. Let $T>0$ and $M>0$ which will be chosen later. Given $I=[-T,\;T]$, we define
\begin{equation} \nonumber
D=\left\{u\in L^{\gamma (r)} (I,\;H^{s}_{r,2}):\;\left\| u\right\| _{L^{\gamma (r)} \left(I,\;H^{s}_{r,2}\right)} \le M\right\}.
\end{equation}
Putting
\begin{equation} \nonumber
d\left(u,\;v\right)=\;\left\| u-v\right\| _{L^{\gamma(r)} (I,\;L^{r,2})},
\end{equation}
$(D,d)$ is a complete metric space (see \cite{AT21}).
Now we consider the mapping
\begin{equation} \nonumber
G:\;u(t)\to S(t)u_{0} -i\lambda \int _{0}^{t}S(t-\tau)|x|^{-b} f(u(\tau ))d\tau
\equiv u_{L} +u_{NL} ,
\end{equation}
where
\begin{equation} \nonumber
u_{L} =S(t)u_{0} ,~u_{NL} =-i\lambda \int _{0}^{t}S(t-\tau )|x|^{-b}f(u(\tau ))d\tau  .
\end{equation}
Lemma \ref{lem 2.13.} (Strichartz estimates) yields that
\begin{equation}\label{GrindEQ__3_30_}
\left\| u_{NL} \right\| _{L^{\gamma (r)} (I,\;H^{s}_{r,2})} \lesssim\left\| |x|^{-b}f(u)\right\|
_{L^{\gamma (\bar{r})' } (I,\;H^{s}_{\bar{r}',2})},
\end{equation}
\begin{equation}\label{GrindEQ__3_31_}
\left\| Gu-Gv \right\| _{L^{\gamma (r)} (I,\;L^{r,2})} \lesssim\left\| |x|^{-b} \left(f(u)-f(v)\right)\right\|_{L^{\gamma (\bar{r})' } (I,\;L^{\bar{r}',2})}.
\end{equation}
Using the fact
\begin{equation} \label{GrindEQ__3_32_}
\left||x|^{-b} f(u)-|x|^{-b} f(v)\right|\lesssim|x|^{-b} \left(\left|u\right|^{\sigma } +\left|v\right|^{\sigma } \right)|u-v|,
\end{equation}
it follows from \eqref{GrindEQ__3_28_}, \eqref{GrindEQ__3_29_} and Lemma \ref{lem 3.3.} that
\begin{equation} \label{GrindEQ__3_33_}
\left\| |x|^{-b} f(u)\right\| _{L^{\gamma (\bar{r})' } (I,\;H_{\bar{r}'}^{s} )} \lesssim\left\| u\right\| _{L^{\gamma (r)} (I,\;\dot{H}_{r}^{s} )}^{\sigma } \left\| u\right\| _{L^{\gamma (r)} (I,\;H_{r}^{s} )},
\end{equation}
\begin{eqnarray}\begin{split} \label{GrindEQ__3_34_}
&\left\| |x|^{-b}f(u)-|x|^{-b}f(v)\right\| _{L^{\gamma (\bar{r})' } \left(I,\;L^{\bar{r}'} \right)}
\lesssim\left(\left\| u\right\| _{L^{\gamma (r)} (I,\;\dot{H}_{r}^{s} )}^{\sigma } +\left\| u\right\| _{L^{\gamma (r)} (I,\;\dot{H}_{r}^{s} )}^{\sigma } \right)\left\| u-v\right\| _{L^{\gamma (r)} (I,\;L^{r} )}.
\end{split}\end{eqnarray}
By the Strichartz estimates \eqref{GrindEQ__2_5_}, we can see that
$$
\left\|S(t)u_{0} \right\| _{L^{\gamma (r)} ([-T,\;T], \;H^{s}_{r,2})} \to 0,~\textnormal{as}~ T\to 0.
$$
Take $M>0$ such that $CM^{\sigma } \le \frac{1}{4} $ and $T>0$ such that
\begin{equation}\label{GrindEQ__3_35_}
\left\| S(t)u_{0} \right\| _{L^{\gamma (r)}([-T,\;T],\;H^{s}_{r,2} )} \le \frac{M}{4} .
\end{equation}
It follows from \eqref{GrindEQ__3_30_}--\eqref{GrindEQ__3_35_} that
\begin{equation}\label{GrindEQ__3_36_}
\left\| Gu\right\| _{L^{\gamma (r)} (I,\;H^{s}_{r,2})} \le \left\| S(t)u_{0} \right\| _{L^{\gamma (r)}(I,\; H^{s}_{r,2}) } +C\left\| u\right\| _{L^{\gamma (r)}
(I,\;H^{s}_{r,2})}^{\sigma +1} \le \frac{M}{2},
\end{equation}
\begin{equation}\label{GrindEQ__3_37_}
\left\| Gu-Gv\right\| _{L^{\gamma (r)} \left(I,\;L^{r}\right)}
\le 2CM^{\sigma } \left\| u-v\right\| _{L^{\gamma(r)} (I,\;L^{r,2})}
\le \frac{1}{2} \left\|u-v\right\| _{L^{\gamma (r)} (I,\;L^{r,2})} .
\end{equation}
\eqref{GrindEQ__3_36_} and \eqref{GrindEQ__3_37_} imply that $G: (D,d)\to (D,d)$ is a contraction mapping. From the Banach fixed
point theorem, there exists a unique solution $u$ of \eqref{GrindEQ__1_1_} in
$(D,d)$. We can extend the above solution to the maximal solution, using a standard argument and we omit the details. This concludes the proof.
\end{proof}

\section{Continuous dependence}
In this section, we study the continuous dependence of the Cauchy problem for the $H^{s}$-critical INLS equation \eqref{GrindEQ__1_1_} in the standard sense in $H^{s}$, i.e. in the sense that in the sense that the local solution flow is continuous $H^{s} \to H^{s} $. To this end, we establish the estimates of terms $f(u)-f(v)$ and $|x|^{-b}(f(u)-f(v))$ in the fractional Sobolev-Lorentz spaces.
\begin{lemma}\label{lem 4.1.}
Let $p>1$, $0<s<1$ and $\sigma > 1$. Assume that $f\in C^{2} \left(\mathbb C\to \mathbb C\right)$ satisfies
\begin{equation} \label{GrindEQ__4_1_}
\left|f^{(k)} (u)\right|\lesssim|u|^{\sigma +1-k} ,
\end{equation}
for any $0\le k\le 2$ and $u\in \mathbb C$.
Suppose also that
\begin{equation} \label{GrindEQ__4_2_}
\frac{1}{p} =\sigma \left(\frac{1}{r} -\frac{s}{n} \right)+\frac{1}{r} ,~\frac{1}{r} -\frac{s}{n} >0.
\end{equation}
Then we have
\begin{equation}\label{GrindEQ__4_3_}
\left\| f(u)-f(v)\right\| _{\dot{H}_{p,2}^{s} } \lesssim\left(\left\| u\right\| _{\dot{H}_{r,2}^{s} }^{\sigma } +\left\| v\right\| _{\dot{H}_{r,2}^{s} }^{\sigma } \right)\left\| u-v\right\| _{\dot{H}_{r,2}^{s} } .
\end{equation}
\end{lemma}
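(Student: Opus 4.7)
The plan is to reduce $f(u)-f(v)$ to a product and then apply the fractional product rule (Lemma \ref{lem 2.10.}) in Sobolev--Lorentz spaces. Specifically, writing $w_\theta := v+\theta(u-v)$, the fundamental theorem of calculus gives
\[
f(u)-f(v) = (u-v)\int_0^1 f'(w_\theta)\,d\theta,
\]
so it suffices, after integrating in $\theta$, to bound $\|f'(w_\theta)(u-v)\|_{\dot{H}_{p,2}^{s}}$ uniformly in $\theta \in [0,1]$ by $(\|u\|_{\dot{H}_{r,2}^{s}}^{\sigma}+\|v\|_{\dot{H}_{r,2}^{s}}^{\sigma})\|u-v\|_{\dot{H}_{r,2}^{s}}$. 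I would apply Lemma \ref{lem 2.10.} with Lebesgue indices determined by $1/p = 1/r + \sigma/a$, where $1/a = 1/r - s/n$, and with Lorentz indices selected so as to satisfy the required summation constraints while enabling the subsequent embeddings.

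The product rule splits the target norm into two pieces: $\|u-v\|_{\dot{H}_{r,q_2}^{s}} \|f'(w_\theta)\|_{L^{a/\sigma,q_1}}$ and $\|u-v\|_{L^{a,q_4}} \|f'(w_\theta)\|_{\dot{H}_{p_3,q_3}^{s}}$, where $1/p_3 = (\sigma-1)/a + 1/r$. The first piece is handled by combining the pointwise bound $|f'(z)|\lesssim |z|^{\sigma}$, the power identity of Lemma \ref{lem 2.2.}, the Lorentz embedding of Lemma \ref{lem 2.3.}, and the Sobolev embedding $\dot{H}_{r,2}^{s}\subset L^{a,2}$ from Lemma \ref{lem 2.8.}, yielding the bound $\|w_\theta\|_{\dot{H}_{r,2}^{s}}^{\sigma} \|u-v\|_{\dot{H}_{r,2}^{s}}$. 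For the second piece, the key step is to apply the fractional chain rule (Lemma \ref{lem 2.11.}) to $f' \in C^{1}(\mathbb{C}\to\mathbb{C})$, obtaining
\[
\|f'(w_\theta)\|_{\dot{H}_{p_3,q_3}^{s}} \lesssim \|f''(w_\theta)\|_{L^{a/(\sigma-1),q_5}} \|w_\theta\|_{\dot{H}_{r,q_6}^{s}},
\]
after which the bound $|f''(z)|\lesssim |z|^{\sigma-1}$ together with Lemmas \ref{lem 2.2.}, \ref{lem 2.3.}, \ref{lem 2.7.} and \ref{lem 2.8.} reduce everything to $\|w_\theta\|_{\dot{H}_{r,2}^{s}}^{\sigma}$. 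The auxiliary Lorentz indices $q_1,\dots,q_6$ are chosen in $[1,\infty)$ to satisfy the summation constraints $1/q_1+1/q_2 = 1/q_3 + 1/q_4 = 1/2$ and $1/q_3 = 1/q_5+1/q_6$ together with $(\sigma-1)q_5\ge 2$; such choices exist precisely because $\sigma > 1$.

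Finally, since $\|w_\theta\|_{\dot{H}_{r,2}^{s}}\le \|u\|_{\dot{H}_{r,2}^{s}} + \|v\|_{\dot{H}_{r,2}^{s}}$ uniformly in $\theta$, combining the two pieces and integrating over $\theta \in [0,1]$ yields the claim. The main obstacle---and the reason for the stronger hypotheses $\sigma > 1$ and $f \in C^{2}$ compared with Lemma \ref{lem 3.1.}---is that the second piece forces one to apply the fractional chain rule to $f'$ itself, which demands $f' \in C^{1}$ together with the nondegenerate pointwise bound $|f''(z)|\lesssim |z|^{\sigma-1}$. Without $\sigma > 1$ the exponent $\sigma - 1$ would be negative and this bound on $f''$ would become singular at the origin, so the restriction $\sigma > 1$ is essential for the present approach.
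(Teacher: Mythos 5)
Your proposal is correct and follows essentially the same route as the paper's proof: the fundamental theorem of calculus to write $f(u)-f(v)=(u-v)\int_0^1 f'(v+\theta(u-v))\,d\theta$, the fractional product rule in Sobolev--Lorentz spaces to split into the same two pieces (your exponent $1/p_3=(\sigma-1)/a+1/r$ coincides with the paper's $1/\tilde p_2=\sigma/a+s/n$), the fractional chain rule applied to $f'$ with the bound $|f''(z)|\lesssim|z|^{\sigma-1}$ for the Sobolev piece, and the uniform bound $\|v+\theta(u-v)\|_{\dot H^s_{r,2}}\lesssim\|u\|_{\dot H^s_{r,2}}+\|v\|_{\dot H^s_{r,2}}$ to conclude. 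The only cosmetic difference is that you bound the integrand uniformly in $\theta$ before integrating, while the paper applies the product rule first and then Minkowski's inequality to the factor $\int_0^1 f'(\cdot)\,d\theta$; these are equivalent.
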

\begin{proof}
Without loss of generality and for simplicity, we assume that $f$ is a function of a real variable. Putting
\begin{equation} \label{GrindEQ__4_4_}
\frac{1}{a} =\frac{1}{r} -\frac{s}{n},\;\frac{1}{\tilde{p}_{1} } =\frac{\sigma }{a} ,~ \frac{1}{\tilde{p}_{2} } =\frac{1}{\tilde{p}_{1} } +\frac{s}{n} ,~
\frac{1}{2}=\frac{\sigma}{q_{1}}+\frac{1}{q_{2}},~ 2<q_{1},\;q_{2}<\infty,
\end{equation}
it follows from \eqref{GrindEQ__4_2_}, \eqref{GrindEQ__4_3_} and Lemma \ref{lem 2.10.} (fractional product rule) that
\begin{equation}\label{GrindEQ__4_5_}
\left\| f(u)-f(v)\right\| _{\dot{H}_{p,2}^{s} }=\left\| (u-v)\int _{0}^{1}f'\left(v+t(u-v)\right)dt \right\| _{\dot{H}_{p,2}^{s} }\lesssim B_{1} +B_{2},
\end{equation}
where

$$ B_{1}=\left\| \int _{0}^{1}f'\left(v+t(u-v)\right)dt \right\| _{L^{\tilde{p}_{1},q_{1}/\sigma} } \left\| u-v\right\| _{\dot{H}_{r,q_{2}}^{s} },$$
$$ B_{2}=\left\| \int _{0}^{1}f'\left(v+t(u-v)\right)dt \right\| _{\dot{H}_{\tilde{p}_{2},q_{1}/\sigma }^{s} } \left\| u-v\right\| _{L^{a,q_{2}}}.$$
First, we estimate $B_{1}$. We have
\begin{equation} \label{GrindEQ__4_6_}
\left\| \int _{0}^{1}f'\left(v+t(u-v)\right)dt \right\| _{L^{\tilde{p}_{1},q_{1}/\sigma}} \le \int _{0}^{1}\left\| f'\left(v+t(u-v)\right)\right\| _{L^{\tilde{p}_{1},q_{1}/\sigma}} dt .
\end{equation}
It also follows from \eqref{GrindEQ__4_1_} that
\begin{equation} \label{GrindEQ__4_7_}
\left|f'\left(v+t(u-v)\right)\right|\lesssim{\mathop{\max }\limits_{t\in \left[0,\, 1\right]}} \left|v+t(u-v)\right|^{\sigma } \lesssim |u|^{\sigma } +|v|^{\sigma },
\end{equation}
for any $0\le t \le 1$. \eqref{GrindEQ__4_4_}--\eqref{GrindEQ__4_7_} imply that
\begin{eqnarray}\begin{split}\label{GrindEQ__4_8_}
\left\| \int _{0}^{1}f'\left(v+t(u-v)\right)dt\right\| _{L^{\tilde{p}_{1},q_{1}/\sigma} }
\lesssim \left\| u\right\| _{L^{a,q_{1}} }^{\sigma } +\left\| v\right\| _{L^{a,q_{1}} }^{\sigma }
\lesssim \left\| u\right\| _{L^{a,2} }^{\sigma } +\left\| v\right\| _{L^{a,2} }^{\sigma }
\lesssim\left\|u\right\|_{\dot{H}_{r,2}^{s}}^{\sigma}+\left\|v\right\|_{\dot{H}_{r,2}^{s}}^{\sigma },
\end{split}\end{eqnarray}
where the last inequality follows from the embedding $\dot{H}_{r,2}^{s} \subset L^{a, 2} $. Hence we have
\begin{equation} \label{GrindEQ__4_9_}
B_{1} \lesssim \left(\left\| u\right\| _{\dot{H}_{r,2}^{s} }^{\sigma } +\left\| v\right\| _{\dot{H}_{r,2}^{s} }^{\sigma } \right)\left\| u-v\right\| _{\dot{H}_{r,q_{2}}^{s} }
\lesssim\left(\left\| u\right\| _{\dot{H}_{r,2}^{s} }^{\sigma }+\left\| v\right\| _{\dot{H}_{r,2}^{s} }^{\sigma } \right)\left\| u-v\right\| _{\dot{H}_{r,2}^{s} } .
\end{equation}
Next, we estimate $B_{2}$. We have
\begin{equation} \label{GrindEQ__4_10_}
\left\|\int_{0}^{1}f'\left(v+t(u-v)\right)dt \right\|_{\dot{H}_{\tilde{p}_{2},q_{1}/\sigma }^{s} } \le \int _{0}^{1}\left\| f'\left(v+t(u-v)\right)\right\| _{\dot{H}_{\tilde{p}_{2} ,q_{1}/\sigma }^{s} } dt .
\end{equation}
Meanwhile, Lemma \ref{lem 2.11.} (fractional chain rule) and \eqref{GrindEQ__4_1_} yield that
\begin{eqnarray}\begin{split} \label{GrindEQ__4_11_}
\left\| f'\left(v+t(u-v)\right)\right\| _{\dot{H}_{\tilde{p}_{2},q_{1}/\sigma  }^{s} } &\lesssim\left\| f''\left(v+t(u-v)\right)\right\|_{L^{\tilde{p}_{3}, q_{1}/(\sigma-1)} } \left\| v+t(u-v)\right\| _{\dot{H}_{r,q_{1}}^{s} } \\
&\lesssim\left\| \left|v+t(u-v)\right|^{\sigma -1} \right\| _{L^{\tilde{p}_{3},q_{1}/(\sigma-1)} } \left\| v+t(u-v)\right\| _{\dot{H}_{r,q_{1}}^{s} } \\
&\lesssim\left\| v+t(u-v)\right\| _{\dot{H}_{r,2}^{s} }^{\sigma } ,
\end{split}\end{eqnarray}
where $\frac{1}{\tilde{p}_{3} } =\left(\sigma -1\right)\left(\frac{1}{r} -\frac{s}{n} \right)$. We can also see that
\begin{equation} \label{GrindEQ__4_12_}
\left\|v+t(u-v)\right\|_{\dot{H}_{r,2}^{s}}\le{\mathop{\max }\limits_{t\in \left[0,\, 1\right]}} \left\| I^{s} v+t\left(I^{s} u-I^{s} v\right)\right\| _{L^{r,2} } \lesssim\left\| u\right\| _{\dot{H}_{r,2}^{s} } +\left\| v\right\| _{\dot{H}_{r,2}^{s} } .
\end{equation}
\eqref{GrindEQ__4_10_}--\eqref{GrindEQ__4_12_} imply that
\begin{equation} \label{GrindEQ__4_13_}
B_{2}=\left\|\int_{0}^{1}f'\left(v+t(u-v)\right)dt\right\|_{\dot{H}_{\tilde{p}_{2},q_{1}/\sigma }^{s} }\left\| u-v\right\| _{L^{a,q_{2}}}
\lesssim\left(\left\| u\right\| _{\dot{H}_{r,2}^{s} }^{\sigma } +\left\| v\right\| _{\dot{H}_{r,2}^{s} }^{\sigma } \right)\left\| u-v\right\| _{\dot{H}_{r,2}^{s} } .
\end{equation}
In view of \eqref{GrindEQ__4_9_} and \eqref{GrindEQ__4_13_}, we have
\begin{equation} \label{GrindEQ__4_14_}
\left\| f(u)-f(v)\right\| _{\dot{H}_{p,2}^{s} } \lesssim B_{1} +B_{2}
\lesssim\left(\left\| u\right\| _{\dot{H}_{r,2}^{s} }^{\sigma } +\left\| v\right\| _{\dot{H}_{r,2}^{s} }^{\sigma } \right)\left\| u-v\right\| _{\dot{H}_{r,2}^{s} },
\end{equation}
this completes the proof.
\end{proof}

\begin{lemma}\label{lem 4.2.}
Let $p>1$, $s\ge 1$ and $\sigma \ge \left\lceil s\right\rceil $. Assume that $f\in C^{\left\lceil s\right\rceil } \left(\mathbb C\to \mathbb C\right)$ satisfies \eqref{GrindEQ__4_1_}
for any $0\le k\le \left\lceil s\right\rceil $ and $u\in \mathbb C$. Assume further
\begin{equation} \label{GrindEQ__4_15_}
\left|f^{\left(\left\lceil s\right\rceil\right)} (u)-f^{\left(\left\lceil s\right\rceil \right)} (v)\right|\lesssim \left(|u|+|v|\right)^{\sigma -\left\lceil s\right\rceil}|u-v| ,
\end{equation}
for any $u,\;v\in \mathbb C$. Suppose also that \eqref{GrindEQ__4_2_} holds. Then we have \eqref{GrindEQ__4_3_}.
\end{lemma}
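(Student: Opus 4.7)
My plan is to mirror the structure of the proof of Lemma 3.1 but to track one factor of $u-v$ explicitly throughout, so that the final bound is linear in the difference rather than merely bilinear in $(u,v)$. The starting point is the identity
\[
f(u)-f(v)=(u-v)\int_{0}^{1}f'\bigl(v+t(u-v)\bigr)\,dt,
\]
valid (after the WLOG reduction to a function of a real variable, as in Lemma 3.1) and reducing the goal to controlling a product of $u-v$ with a weighted integral of $f'$.

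I would split the proof into the cases $s\in\mathbb N$ and $s\notin\mathbb N$. For $s\in\mathbb N$ (so $v=0$ and $\lceil s\rceil=s$), apply Leibniz's formula directly to $D^{\alpha}\bigl[f(u)-f(v)\bigr]$ with $|\alpha|=s$, and then the chain rule to each $D^{\beta}f(u)$ and $D^{\beta}f(v)$, producing sums of terms of the form
\[
f^{(q)}(u)\prod_{i}D^{\alpha_i}u-f^{(q)}(v)\prod_{i}D^{\alpha_i}v,\qquad 1\le q\le s.
\]
Telescope each such term as
\[
\bigl[f^{(q)}(u)-f^{(q)}(v)\bigr]\prod_{i}D^{\alpha_i}u+f^{(q)}(v)\sum_{j}\Bigl(\prod_{i<j}D^{\alpha_i}u\Bigr)\bigl(D^{\alpha_j}u-D^{\alpha_j}v\bigr)\Bigl(\prod_{i>j}D^{\alpha_i}v\Bigr).
\]
In the first summand, for $q<s$ use the fundamental theorem of calculus $f^{(q)}(u)-f^{(q)}(v)=(u-v)\int_0^1 f^{(q+1)}(w_t)\,dt$ together with the pointwise bound $|f^{(q+1)}(w)|\lesssim|w|^{\sigma-q}$ from \eqref{GrindEQ__4_1_}; for $q=s$ invoke the hypothesis \eqref{GrindEQ__4_15_} directly, which gives $|f^{(s)}(u)-f^{(s)}(v)|\lesssim(|u|+|v|)^{\sigma-s}|u-v|$. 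The second summand produces a factor $D^{\alpha_j}(u-v)$ explicitly. Hölder's inequality in Lorentz spaces (Lemma 2.4), combined with the embedding $\dot H^{s}_{r,2}\subset L^{a,2}$ (Lemma 2.8), then closes the estimate in exactly the same bookkeeping as in the proof of Lemma 3.1.

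For $s\notin\mathbb N$, first reduce via Lemma 2.9 to estimating $\sum_{|\alpha|=[s]}\|D^{\alpha}[f(u)-f(v)]\|_{\dot H^{v}_{p,2}}$ with $v=s-[s]\in(0,1)$, then apply the same Leibniz--telescope expansion as above (now for $q\le[s]$), followed by the fractional product rule (Lemma 2.10) to distribute the remaining $v$-fractional derivative across each product, and the fractional chain rule (Lemma 2.11) whenever a derivative must land on $f^{(q+1)}(w_t)$; this last step is legitimate because $q+1\le\lceil s\rceil$ and $f\in C^{\lceil s\rceil}$. The critical top-order case $q=[s]$ requires controlling $\|f^{([s])}(u)-f^{([s])}(v)\|_{\dot H^{v}_{(\cdot)}}$, which I would handle by writing $f^{([s])}(u)-f^{([s])}(v)=(u-v)\int_0^1 f^{(\lceil s\rceil)}(w_t)\,dt$ and using Lemmas 2.10--2.11 together with \eqref{GrindEQ__4_1_} for the pointwise size of $f^{(\lceil s\rceil)}$ and \eqref{GrindEQ__4_15_} for its Hölder variation. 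The chosen exponents satisfy the same scaling identity \eqref{GrindEQ__4_2_} as in Lemma 3.1, so the same embeddings apply.

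The main obstacle will be the intricate index bookkeeping---selecting Sobolev--Lorentz exponents so that Hölder closes at the correct scaling while every intermediate norm lies in the admissible range for Lemmas 2.10 and 2.11 and for the embedding Lemma 2.8. The hypothesis $\sigma\ge\lceil s\rceil$ plays the decisive role: it guarantees that the weight $(|u|+|v|)^{\sigma-\lceil s\rceil}$ appearing in \eqref{GrindEQ__4_15_} has a non-negative exponent, so that it can be estimated pointwise by $|u|^{\sigma-\lceil s\rceil}+|v|^{\sigma-\lceil s\rceil}$ and absorbed into a Lorentz product that embeds into $\dot H^{s}_{r,2}$. Once the bookkeeping is in place, the desired bilinear estimate \eqref{GrindEQ__4_3_} follows by factoring a single $\|u-v\|_{\dot H^{s}_{r,2}}$ out of the telescoped sum.
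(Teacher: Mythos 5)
Your overall architecture coincides with the paper's: reduce via Lemma \ref{lem 2.9.} to $\sum_{|\alpha|=[s]}\|D^{\alpha}f(u)-D^{\alpha}f(v)\|_{\dot H^{v}_{p,2}}$ with $v=s-[s]$, expand by the Leibniz/Fa\`a di Bruno formula \eqref{GrindEQ__3_5_}, telescope each term into the two pieces the paper calls $C_1=(f^{(q)}(u)-f^{(q)}(v))\prod D^{\alpha_i}u$ and $C_2=f^{(q)}(v)(\prod D^{\alpha_i}u-\prod D^{\alpha_i}v)$, and close with the Lorentz H\"older inequality, the product/chain rules, and the embeddings of Lemma \ref{lem 2.8.}, reusing \eqref{GrindEQ__3_12_} and \eqref{GrindEQ__3_18_} for the product factors. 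You also correctly identify where \eqref{GrindEQ__4_15_} and $\sigma\ge\lceil s\rceil$ enter. So for the integer case and for all sub-top orders $q<[s]$ your plan reproduces the paper's proof.

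There is, however, a genuine gap at the one step you yourself flag as critical: bounding $\|f^{([s])}(u)-f^{([s])}(v)\|_{\dot H^{v}_{p_1,q_3}}$ when $s\notin\mathbb N$ and $q=[s]=\lceil s\rceil-1$. Your plan is to write $f^{([s])}(u)-f^{([s])}(v)=(u-v)\int_0^1 f^{(\lceil s\rceil)}(w_t)\,dt$ and apply Lemmas \ref{lem 2.10.}--\ref{lem 2.11.}. The product rule produces the term $\|u-v\|_{L^{a,\cdot}}\,\bigl\|\int_0^1 f^{(\lceil s\rceil)}(w_t)\,dt\bigr\|_{\dot H^{v}_{\cdot}}$, and to control the second factor by the fractional chain rule you would need $f^{(\lceil s\rceil)}\in C^1$ with $|f^{(\lceil s\rceil+1)}(z)|\lesssim|z|^{\sigma-\lceil s\rceil}$ --- but the lemma only assumes $f\in C^{\lceil s\rceil}$ together with the Lipschitz-type bound \eqref{GrindEQ__4_15_}. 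Lemma \ref{lem 2.11.} as stated requires $G\in C^1$, and no fractional chain rule for merely H\"older/Lipschitz-continuous $G$ is available in the paper's toolkit, so this step does not close as written. The paper's proof circumvents the difficulty differently: it introduces $p_6$ with $\frac{1}{p_6}=\frac{\sigma+1-q}{a}+\frac{1}{n}$, checks $1<p_6<\infty$, and uses the embedding $\dot H^{1}_{p_6,q_3}\subset\dot H^{v}_{p_1,q_3}$ to trade the order-$v$ fractional derivative for a genuine first-order derivative; then $\partial_{x_i}\bigl(f^{([s])}(u)-f^{([s])}(v)\bigr)=f^{(\lceil s\rceil)}(u)\partial_{x_i}u-f^{(\lceil s\rceil)}(v)\partial_{x_i}v$ is telescoped once more, and \eqref{GrindEQ__4_15_} is needed only as a pointwise bound in an $L^{p_7,q_5}$ norm (see \eqref{GrindEQ__4_22_}--\eqref{GrindEQ__4_27_}). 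You should either adopt this lifting trick or explicitly import a fractional chain rule for H\"older-continuous functions; without one of these, the top-order estimate is missing.
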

\begin{proof}
By Lemma \ref{lem 2.9.}, we have
\begin{equation} \label{GrindEQ__4_16_}
\left\| f(u)-f(v)\right\| _{\dot{H}_{p,2}^{s} } \lesssim\sum _{|\alpha |=[s]}\left\| D^{\alpha } f(u)-D^{\alpha } f(v)\right\| _{\dot{H}_{p,2}^{v} }  ,
\end{equation}
where $v=s-[s]$. It follows from \eqref{GrindEQ__3_5_} that
\begin{eqnarray}\begin{split} \label{GrindEQ__4_17_}
\left\| D^{\alpha } f(u)-D^{\alpha } f(v)\right\| _{\dot{H}_{p,2}^{v} } &=\left\| \sum _{q=1}^{|\alpha |}\sum _{\Lambda _{\alpha }^{q} }C_{\alpha ,\;q} \left(f^{(q)} (u)\prod _{i=1}^{q}D^{\alpha _{i} } u -f^{(q)} (v)\prod _{i=1}^{q}D^{\alpha _{i} } v \right)  \right\| _{\dot{H}_{p,2}^{v} } \\
&\le \sum _{q=1}^{|\alpha |}\sum _{\Lambda _{\alpha }^{q} }C_{\alpha ,\;q} \left(\left\| C_{1} \right\| _{\dot{H}_{p,2}^{v} } +\left\| C_{2} \right\| _{\dot{H}_{p,2}^{v} } \right) ,
\end{split}\end{eqnarray}
where
\begin{equation} \label{GrindEQ__4_18_}
C_{1} =\left(f^{(q)} (u)-f^{(q)} (v)\right)\prod _{i=1}^{q}D^{\alpha _{i} } u ,~C_{2} =f^{(q)} (v)\left(\prod _{i=1}^{q}D^{\alpha _{i} } u -\prod _{i=1}^{q}D^{\alpha _{i} } v \right).
\end{equation}
We only consider the case $s\notin \mathbb N$. The case $s\in \mathbb N$ is treated similarly and more easily. We use the similar argument as in the proof of Lemma \ref{lem 3.1.}. As in the proof of Lemma \ref{lem 3.1.}, putting
\begin{equation} \label{GrindEQ__4_19_}
\frac{1}{a} =\frac{1}{r} -\frac{s}{n} ,~\frac{1}{a_{i} } =\frac{1}{r} -\frac{s-|\alpha _{i}|}{n},
\end{equation}
we have
\begin{equation} \label{GrindEQ__4_20_}
\frac{1}{p} =\frac{\sigma +1-q}{a} +\sum _{i=1}^{q}\frac{1}{a_{i} }  +\frac{v}{n} .
\end{equation}
{\bf Step 1.} First, we estimate $\left\| C_{1} \right\| _{\dot{H}_{p,2}^{v} } $, where $C_{1}$ is given in \eqref{GrindEQ__4_18_}. It follows from \ref{lem 2.10.} (fractional product rule) that
\begin{eqnarray}\begin{split} \label{GrindEQ__4_21_}
\left\| C_{1} \right\| _{\dot{H}_{p,2}^{v} } &\lesssim \left\| f^{(q)} (u)-f^{(q)}(v)\right\| _{\dot{H}_{p_{1},q_{3} }^{v} } \left\| \prod _{i=1}^{q}D^{\alpha _{i} } u \right\| _{L^{r_{1},q_{4}}} +\left\|f^{(q)} (u)-f^{(q)}(v)\right\| _{L^{p_{2},q_{3}} } \left\| \prod _{i=1}^{q}D^{\alpha _{i} } u \right\| _{\dot{H}_{r_{2},q_{4}}^{v} }  \\
&\equiv D_{1} +D_{2},
\end{split}\end{eqnarray}
where $p_{1}$, $p_{2}$, $r_{1}$, $r_{2}$, $q_{3}$ and $q_{4}$ are given in \eqref{GrindEQ__3_9_}--\eqref{GrindEQ__3_11_}.

First, we estimate $D_{1}$. Putting $\frac{1}{p_{6} } =\frac{\sigma +1-q}{a} +\frac{1}{n} $, we can see that
\[0<\frac{1}{p_{6} } \le \sigma \left(\frac{1}{r} -\frac{s}{n} \right)+\frac{1}{n} \le \sigma \left(\frac{1}{r} -\frac{s}{n} \right)+\frac{s}{n} <\sigma \left(\frac{1}{r} -\frac{s}{n} \right)+\frac{1}{r} =\frac{1}{p_{1} } <1,\]
which implies that $1<p_{6} <\infty $ and $\dot{H}_{p_{6},q_{3} }^{1} \subset \dot{H}_{p_{1},q_{3} }^{v} $. Thus we have
\begin{eqnarray}\begin{split} \label{GrindEQ__4_22_}
&\left\| f^{(q)} (u)-f^{(q)} (v)\right\| _{\dot{H}_{p_{1},q_{3}}^{v} } \lesssim\left\| f^{(q)} (u)-f^{(q)} (v)\right\| _{\dot{H}_{p_{6},q_{3}}^{1} }
=\sum _{i=1}^{n}\left\| f^{\left(q+1\right)} (u)\partial _{x_{i} } u-f^{\left(q+1\right)} (v)\partial _{x_{i} } v\right\| _{L^{p_{6},q_{3}}}   \\
&~~~~~~\le \sum _{i=1}^{n}\left\| \left(f^{\left(q+1\right)} (u)-f^{\left(q+1\right)} (v)\right)\partial _{x_{i} } u\right\| _{L^{p_{6},q_{3}}} +\left\| f^{\left(q+1\right)} (v)\left(\partial _{x_{i} } u-\partial _{x_{i} } v\right)\right\| _{L^{p_{6},q_{3}}}   \\
&~~~~~~\lesssim\sum _{i=1}^{n}\left\| f^{\left(q+1\right)} (u)-f^{\left(q+1\right)} (v)\right\| _{L^{p_{7},q_{5} }} \left\| \partial _{x_{i} } u\right\| _{L^{r_{7},q_{1}} } +\left\| f^{\left(q+1\right)} (v)\right\| _{L^{p_{7},q_{5} }} \left\| \partial _{x_{i} } u-\partial _{x_{i} } v\right\| _{L^{r_{7},q_{1}} },
\end{split}\end{eqnarray}
where
\begin{equation} \label{GrindEQ__4_23_}
\frac{1}{p_{7} } =\frac{\sigma -q}{a} ,~\frac{1}{r_{7} } =\frac{1}{r} -\frac{s-1}{n},~\frac{1}{q_{5}}=\frac{\sigma-q}{q_{1}} .
\end{equation}
Using the embedding $\dot{H}_{r,2}^{s} \subset \dot{H}_{r_{7},q_{1} }^{1} $, we have
\begin{equation} \label{GrindEQ__4_24_}
\left\| \partial _{x_{i} } u\right\| _{L^{r_{7},q_{1}} } \lesssim\left\| u\right\| _{\dot{H}_{r_{7},q_{1} }^{1} } \lesssim\left\| u\right\| _{\dot{H}_{r,2}^{s} } ,~ \left\| \partial _{x_{i} } u-\partial _{x_{i} } v\right\| _{L^{r_{7},q_{1}} } \lesssim\left\| u-v\right\| _{\dot{H}_{r_{7},q_{1} }^{1} } \lesssim\left\| u-v\right\| _{\dot{H}_{r,2}^{s} } .
\end{equation}
We also have
\begin{equation} \label{GrindEQ__4_25_}
\left\| f^{\left(q+1\right)} (v)\right\| _{L^{p_{7},q_{5}} }\lesssim \left\| v\right\| _{L^{a,q_{1}} }^{\sigma -q} \lesssim \left\| v\right\| _{\dot{H}_{r,2}^{s} }^{\sigma -q} .
\end{equation}

$\cdot$ If $q=[s]=\left\lceil s\right\rceil -1$, then it follows from \eqref{GrindEQ__4_15_}, \eqref{GrindEQ__4_23_}, Lemma \ref{lem 2.4.} and the embedding $H_{r,2}^{s}\subset L^{a,q_{1}}$ that
\begin{equation} \label{GrindEQ__4_26_}
\left\| f^{\left(\left\lceil s\right\rceil \right)} (u)-f^{\left(\left\lceil s\right\rceil \right)} (v)\right\| _{L^{p_{7},q_{5}} } \lesssim \left(\left\| u\right\| _{\dot{H}_{r,2}^{s} }^{\sigma -\left\lceil s\right\rceil} +\left\| v\right\| _{\dot{H}_{r,2}^{s} }^{\sigma -\left\lceil s\right\rceil} \right)\left\| u-v\right\| _{\dot{H}_{r,2}^{s} }.
\end{equation}
In view of \eqref{GrindEQ__4_22_}, \eqref{GrindEQ__4_24_}--\eqref{GrindEQ__4_26_}, we have
\begin{eqnarray}\begin{split} \label{GrindEQ__4_27_}
\left\| f^{\left([s]\right)} (u)-f^{\left([s]\right)} (v)\right\| _{\dot{H}_{p_{1},q_{3}}^{v} }\lesssim \left(\left\| u\right\| _{\dot{H}_{r,2}^{s} }^{\sigma -[s]} +\left\| v\right\| _{\dot{H}_{r,2}^{s} }^{\sigma -[s]} \right)\left\| u-v\right\| _{\dot{H}_{r,2}^{s} } .
\end{split}\end{eqnarray}
It follows from \eqref{GrindEQ__3_12_} and \eqref{GrindEQ__4_27_} that
\begin{eqnarray}\begin{split} \label{GrindEQ__4_28_}
D_{1} &=\left\| f^{\left([s]\right)} (u)-f^{\left([s]\right)} (v)\right\| _{\dot{H}_{p_{1},q_{3} }^{v} } \left\| \prod _{i=1}^{[s]}D^{\alpha _{i} } u \right\| _{L^{r_{1},q_{4}}}\lesssim\left(\left\| u\right\| _{\dot{H}_{r,2}^{s} }^{\sigma }+\left\| v\right\| _{\dot{H}_{r,2}^{s} }^{\sigma } \right)\left\| u-v\right\| _{\dot{H}_{r,2}^{s} }.
\end{split}\end{eqnarray}

$\cdot$ If $q<[s]=\left\lceil s\right\rceil -1$, then it follows from \eqref{GrindEQ__4_1_} that
\begin{eqnarray}\begin{split} \label{GrindEQ__4_29_}
\left\| f^{\left(q+1\right)} (u)-f^{\left(q+1\right)} (v)\right\| _{L^{p_{7},q_{5}} } &=\left\| (u-v)\int _{0}^{1}f^{\left(q+2\right)} \left(v+t(u-v)\right)dt \right\| _{L^{p_{7},q_{5}} }\\
&\lesssim \left\| (u-v)\int _{0}^{1}\left|v+t(u-v)\right|^{\sigma -q-1} dt \right\| _{L^{p_{7},q_{5}} }\\
&\lesssim \left\| |u|+|v|\right\| _{L^{a,q_{1}}}^{\sigma -q-1} \left\| u-v\right\| _{L^{a,q_{1}}}\\
&\lesssim \left(\left\| u\right\| _{\dot{H}_{r,2}^{s} }^{\sigma -q-1} +\left\| v\right\| _{\dot{H}_{r,2}^{s} }^{\sigma -q-1} \right) \left\| u-v\right\| _{\dot{H}_{r,2}^{s} }.
\end{split}\end{eqnarray}
In view of \eqref{GrindEQ__4_22_}, \eqref{GrindEQ__4_24_}, \eqref{GrindEQ__4_25_} and \eqref{GrindEQ__4_29_}, we have
\begin{equation} \label{GrindEQ__4_30_}
\left\| f^{(q)} (u)-f^{(q)} (v)\right\| _{\dot{H}_{p_{1},q_{3} }^{v} } \lesssim\left(\left\| u\right\| _{\dot{H}_{r,2}^{s} }^{\sigma -q} +\left\| v\right\| _{\dot{H}_{r,2}^{s} }^{\sigma -q} \right)\left\| u-v\right\| _{\dot{H}_{r,2}^{s} } .
\end{equation}
\eqref{GrindEQ__3_12_} and \eqref{GrindEQ__4_30_} yield that
\begin{equation} \label{GrindEQ__4_31_}
D_{1} \lesssim \left(\left\| u\right\| _{\dot{H}_{r,2}^{s} }^{\sigma } +\left\| v\right\| _{\dot{H}_{r,2}^{s} }^{\sigma } \right)\left\| u-v\right\| _{\dot{H}_{r,2}^{s} } .
\end{equation}
Thus, for any $1\le q\le [s]$, we have \eqref{GrindEQ__4_31_}.

Next, we estimate $D_{2}$.
Since $q\le [s]<\left\lceil s\right\rceil $, it follows from \eqref{GrindEQ__4_1_} that
\begin{eqnarray}\begin{split} \label{GrindEQ__4_32_}
\left\| f^{(q)} (u)-f^{(q)} (v)\right\| _{L^{p_{2},q_{3}} } &\lesssim \left\| (u-v)\int _{0}^{1}\left|v+t(u-v)\right|^{\sigma -q} dt \right\| _{L^{p_{2},q_{3}} }  \\
&\lesssim \left\| |u|+|v|\right\| _{L^{a,q_{1}}}^{\sigma -q} \left\| u-v\right\| _{L^{a,q_{1}}}\lesssim \left(\left\| u\right\| _{\dot{H}_{r,2}^{s} }^{\sigma -q} +\left\| v\right\| _{\dot{H}_{r,2}^{s} }^{\sigma -q} \right)\left\| u-v\right\| _{\dot{H}_{r,2}^{s} }.
\end{split}\end{eqnarray}
In view of \eqref{GrindEQ__3_18_} and \eqref{GrindEQ__4_32_}, we have
\begin{equation} \label{GrindEQ__4_33_}
D_{2} =\left\| f^{(q)} (u)-f^{(q)} (v)\right\| _{L^{p_{2},q_{3}} } \left\| \prod _{i=1}^{q}D^{\alpha _{i} } u \right\| _{\dot{H}_{r_{2},q_{4}}^{v} } \lesssim\left(\left\| u\right\| _{\dot{H}_{r,2}^{s} }^{\sigma } +\left\| v\right\| _{\dot{H}_{r,2}^{s} }^{\sigma } \right)\left\| u-v\right\| _{\dot{H}_{r,2}^{s} }.
\end{equation}
In view of \eqref{GrindEQ__4_21_}, \eqref{GrindEQ__4_31_} and \eqref{GrindEQ__4_33_}, we have
\begin{eqnarray}\begin{split} \label{GrindEQ__4_34_}
\left\| C_{1} \right\| _{\dot{H}_{p,2}^{v} }\lesssim D_{1}+D_{2}\lesssim\left(\left\| u\right\| _{\dot{H}_{r,2}^{s} }^{\sigma } +\left\| v\right\| _{\dot{H}_{r,2}^{s} }^{\sigma } \right)\left\| u-v\right\| _{\dot{H}_{r,2}^{s} }.
\end{split}\end{eqnarray}

{\bf Step 2.} Next, we estimate $\left\| C_{2} \right\| _{\dot{H}_{p,2}^{v} }$.
It follows from \eqref{GrindEQ__3_9_}--\eqref{GrindEQ__3_11_} and Lemma \ref{lem 2.10.} (fractional product rule) that
\begin{eqnarray}\begin{split}\label{GrindEQ__4_35_}
\left\| C_{2}\right\| _{\dot{H}_{p,2}^{v} }\lesssim  \left\| f^{(q)} (u)\right\| _{\dot{H}_{p_{1},q_{3} }^{v} } &\left\| \prod _{i=1}^{q}D^{\alpha _{i} } u -\prod _{i=1}^{q}D^{\alpha _{i} } v\right\| _{L^{r_{1},q_{4}}}\\
&+\left\| f^{(q)} (u)\right\| _{L^{p_{2},q_{3}} } \left\| \prod _{i=1}^{q}D^{\alpha _{i} } u -\prod _{i=1}^{q}D^{\alpha _{i} } v \right\| _{\dot{H}_{r_{2},q_{4}}^{v} }.
\end{split}\end{eqnarray}
Repeating the same argument as in the proof of \eqref{GrindEQ__3_12_} and \eqref{GrindEQ__3_18_}, we can easily get
\begin{eqnarray}\begin{split} \label{GrindEQ__4_36_}
\left\| \prod _{i=1}^{q}D^{\alpha _{i} } u -\prod _{i=1}^{q}D^{\alpha _{i} } v\right\| _{L^{r_{1},q_{4}}}&\le \sum _{i=1}^{q}\left\| \prod _{j=1}^{i-1}D^{\alpha _{j} } u \prod _{j=i+1}^{q}D^{\alpha _{j} } v \left(D^{\alpha _{i} } u-D^{\alpha _{i} } v\right)\right\| _{L^{r_{1},q_{4}}}\\
&\lesssim\left(\left\| u\right\| _{\dot{H}_{r,2}^{s} }^{q-1} +\left\| v\right\| _{\dot{H}_{r,2}^{s} }^{q-1} \right)\left\| u-v\right\| _{\dot{H}_{r,2}^{s} },
\end{split}\end{eqnarray}
\begin{eqnarray}\begin{split} \label{GrindEQ__4_37_}
\left\| \prod _{i=1}^{q}D^{\alpha _{i} } u -\prod _{i=1}^{q}D^{\alpha _{i} } v \right\| _{\dot{H}_{r_{2},q_{4}}^{v} }& \le \sum _{i=1}^{q}\left\| \prod _{j=1}^{i-1}D^{\alpha _{j} } u \prod _{j=i+1}^{q}D^{\alpha _{j} } v \left(D^{\alpha _{i} } u-D^{\alpha _{i} } v\right)\right\| _{\dot{H}_{r_{2},q_{4}}^{v} } \\
&\lesssim\left(\left\| u\right\| _{\dot{H}_{r,2}^{s} }^{q-1} +\left\| v\right\| _{\dot{H}_{r,2}^{s} }^{q-1} \right)\left\| u-v\right\| _{\dot{H}_{r,2}^{s} },
\end{split}\end{eqnarray}
whose proofs will be omitted.
In view of \eqref{GrindEQ__4_35_}--\eqref{GrindEQ__4_37_}, \eqref{GrindEQ__3_14_} and \eqref{GrindEQ__3_16_}, we have
\begin{equation} \label{GrindEQ__4_38_}
\left\| C_{2} \right\| _{\dot{H}_{p,2}^{v} } \lesssim\left(\left\| u\right\| _{\dot{H}_{r,2}^{s} }^{\sigma } +\left\| v\right\| _{\dot{H}_{r,2}^{s} }^{\sigma } \right)\left\| u-v\right\| _{\dot{H}_{r,2}^{s} } .
\end{equation}
Using \eqref{GrindEQ__4_16_}, \eqref{GrindEQ__4_17_}, \eqref{GrindEQ__4_34_} and \eqref{GrindEQ__4_38_}, we can get the desired result.
\end{proof}
\begin{lemma}\label{lem 4.3.}
Let $s>0$ and $f\left(z\right)$ be a polynomial in $z$ and $\bar{z}$ satisfying $1<\deg \left(f\right)=1+\sigma $. Suppose also that \eqref{GrindEQ__4_2_} holds. Then we have \eqref{GrindEQ__4_3_}.
\end{lemma}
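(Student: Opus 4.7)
The plan is to exploit the polynomial structure of $f$ in order to bypass the fractional chain rule (Lemma \ref{lem 2.11.}) entirely; this is what allows the removal of the constraint $\sigma>1$ or $\sigma\ge\lceil s\rceil$ that was needed in Lemmas \ref{lem 4.1.} and \ref{lem 4.2.}. Since $f$ is a polynomial in $z,\bar z$ of degree $1+\sigma$ (and, as is implicit in the underlying class $X(\sigma,s,b)$, satisfies the growth condition $|f(z)|\lesssim|z|^{1+\sigma}$), $f$ is a linear combination of monomials $z^j\bar z^k$ with $j+k=1+\sigma$. By linearity it suffices to establish \eqref{GrindEQ__4_3_} for each such monomial $m(z)=z^j\bar z^k$.

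For such a monomial, I would use the telescoping identity
\begin{equation*}
m(u)-m(v)=(u-v)\,\bar v^k\sum_{\ell=0}^{j-1}u^\ell v^{j-1-\ell}+(\bar u-\bar v)\,u^j\sum_{\ell=0}^{k-1}\bar u^\ell\bar v^{k-1-\ell},
\end{equation*}
so that $m(u)-m(v)$ becomes a sum of terms of the form $(u-v)w_1\cdots w_\sigma$ or $(\bar u-\bar v)w_1\cdots w_\sigma$, with each $w_i\in\{u,v,\bar u,\bar v\}$. It is then enough to prove
\begin{equation*}
\|(u-v)w_1\cdots w_\sigma\|_{\dot H_{p,2}^s}\lesssim\left(\|u\|_{\dot H_{r,2}^s}^\sigma+\|v\|_{\dot H_{r,2}^s}^\sigma\right)\|u-v\|_{\dot H_{r,2}^s}
\end{equation*}
(and the analogue with $\bar u-\bar v$ in place of $u-v$). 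When $0<s\le 1$, I would achieve this by iterating the fractional product rule (Lemma \ref{lem 2.10.}) across the $\sigma+1$ factors, placing the $\dot H^s$-derivative on one factor in turn and controlling the remaining $\sigma$ factors by Hölder (Lemma \ref{lem 2.4.}) together with the embedding $\dot H_{r,2}^s\hookrightarrow L^{a,q_1}$ coming from Lemma \ref{lem 2.8.} with $\frac{1}{a}=\frac{1}{r}-\frac{s}{n}$. When $s>1$, I would first apply Lemma \ref{lem 2.9.} to reduce to $\sum_{|\alpha|=[s]}\|D^\alpha(\,\cdot\,)\|_{\dot H_{p,2}^v}$ with $v=s-[s]\in[0,1)$, distribute $D^\alpha$ across the factors via the ordinary Leibniz rule, and finally invoke Lemma \ref{lem 2.10.} on each resulting product in $\dot H_{p,2}^v$. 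In both regimes, the scaling relation $\frac{1}{p}=\frac{\sigma}{a}+\frac{1}{r}$ (a direct consequence of \eqref{GrindEQ__4_2_}) makes the Hölder arithmetic close up, and the elementary bound $(\|u\|+\|v\|)^\sigma\lesssim\|u\|^\sigma+\|v\|^\sigma$ produces the desired right-hand side of \eqref{GrindEQ__4_3_}.

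The main obstacle will be the bookkeeping of Lorentz second indices when iterating the product rule: intermediate estimates produce conjugate pairs $(q_1,q_2)$ with $\sigma/q_1+1/q_2=1/2$ (as in \eqref{GrindEQ__3_4_}), and one must track these through every application of Hölder (Lemma \ref{lem 2.4.}) and of the Lorentz inclusion (Lemma \ref{lem 2.7.}) so that the final target is $\dot H_{p,2}^s$ with second index exactly $2$. This is the same combinatorial juggling already carried out in detail in the proofs of Lemmas \ref{lem 3.1.} and \ref{lem 4.2.}, so the genuinely new content in Lemma \ref{lem 4.3.} lies in the algebraic reduction to products of the form $(u-v)w_1\cdots w_\sigma$, which sidesteps any need to differentiate $f$ itself and thus eliminates the regularity requirements on $\sigma$ that appear in Lemmas \ref{lem 4.1.} and \ref{lem 4.2.}.
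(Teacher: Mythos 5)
Your proposal is correct, but it takes a genuinely different route from the paper. The paper's proof of Lemma \ref{lem 4.3.} is a reduction argument: the cases $0<s<1,\ \sigma>1$ and $s\ge 1,\ \sigma\ge\lceil s\rceil$ are delegated to Lemmas \ref{lem 4.1.} and \ref{lem 4.2.} respectively, the case $0<s<1,\ \sigma=1$ is dismissed as trivial, and the only genuinely new case, $1\le s$ with $\sigma\le\lceil s\rceil-1$, is handled by observing that a polynomial of degree $1+\sigma$ still satisfies the derivative bounds $|f^{(k)}(u)|\lesssim|u|^{\sigma+1-k}$ for $k\le\sigma+1$ and has \emph{vanishing} derivatives of order $k>\sigma+1$ (hence the difference bounds \eqref{GrindEQ__4_41_}--\eqref{GrindEQ__4_42_}), after which the entire machinery of Lemma \ref{lem 4.2.} is rerun. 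You instead decompose $f$ into monomials $z^j\bar z^k$, telescope $m(u)-m(v)$ into sums of $(u-v)w_1\cdots w_\sigma$ (and conjugates), and estimate these directly by iterating the fractional product rule (Lemma \ref{lem 2.10.}), combined with Lemma \ref{lem 2.9.} and the ordinary Leibniz rule when $s>1$; this avoids the fractional chain rule and the case split entirely and treats all integer $\sigma\ge1$ uniformly. What the paper's route buys is economy (most cases are already proved); what yours buys is a self-contained argument that makes transparent \emph{why} polynomials escape the restrictions $\sigma>1$ and $\sigma\ge\lceil s\rceil$: no derivative of $f$ is ever taken. Two small points to keep in mind: the Lorentz second-index bookkeeping you flag is indeed the only delicate part and is resolved exactly as in \eqref{GrindEQ__3_4_}, \eqref{GrindEQ__3_11_} (one factor carries index $q_2$, the remaining $\sigma$ carry $q_1$ with $\sigma/q_1+1/q_2=1/2$, using $L^{a,2}\subset L^{a,q_1}$); and the growth bound $|f(z)|\lesssim|z|^{1+\sigma}$ you invoke requires $f$ to contain only top-degree monomials, but this implicit homogeneity assumption is equally present in the paper's own proof (its claim \eqref{GrindEQ__4_39_} fails near $z=0$ otherwise), so it is not a gap relative to the paper.
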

\begin{proof}
The result in the case $0<s<1$ and $\sigma>1$ follows from Lemma \ref{lem 4.1.}. The case $0<s<1$ and $\sigma=1$ is trivial. In the case $1\le s<\frac{n}{2}$ and $\sigma \ge \left\lceil s\right\rceil $, the result follows from Lemma \ref{lem 4.2.}. Let us consider the case $1\le s<\frac{n}{2}$ and $\sigma \le \left\lceil s\right\rceil -1$. Obviously, we have
\begin{equation} \label{GrindEQ__4_39_}
\left|f^{(k)} (u)\right|\lesssim|u|^{\sigma +1-k} ,
\end{equation}
for any $0\le k\le \sigma +1$ and
\begin{equation} \label{GrindEQ__4_40_}
\left|f^{(k)} (u)\right|=0,
\end{equation}
for any $k>\sigma +1$. Using \eqref{GrindEQ__4_39_} and \eqref{GrindEQ__4_40_}, we have
\begin{equation} \label{GrindEQ__4_41_}
\left|f^{(q)} (u)-f^{(q)} (v)\right|\;\lesssim\left|u-v\right|\left(|u|^{\sigma -q} +|v|^{\sigma -q} \right),
\end{equation}
for any $q\le \sigma $ and
\begin{equation} \label{GrindEQ__4_42_}
\left|f^{(q)} (u)-f^{(q)} (v)\right|=0,
\end{equation}
for any $q>\sigma $. Using \eqref{GrindEQ__4_39_}--\eqref{GrindEQ__4_42_} and the same argument as in the proof of Lemma \ref{lem 4.2.}, we can easily get the desired result.
\end{proof}

Using Lemmas \ref{lem 4.1.}--\ref{lem 4.3.} and the same argument as in the proof of Lemma \ref{lem 3.3.}, we have the following result, whose proof will be omitted.
\begin{lemma}\label{lem 4.4.}
Let $1<p,\;r<\infty $, $b>0$, $s\ge 0$, $b+s<n$ and $\sigma>\max \left\{0,\;\left\lceil
s\right\rceil -1\right\}$.
Assume that one of the following conditions is satisfied:
\begin{itemize}
  \item $f\left(z\right)$ is not a polynomial in $z$ and $\bar{z}$ satisfying $1<\deg (f)=1+\sigma$.
  \item $0<s<1$, $\sigma > 1$ and $f\in C^{2} \left(\mathbb C\to \mathbb C\right)$ satisfies \eqref{GrindEQ__1_3_} for any $0\le k\le 2$ and $u\in \mathbb C$
  \item $s\ge 1$, $\sigma \ge \left\lceil s\right\rceil $ and $f\in C^{\left\lceil s\right\rceil } \left(\mathbb C\to \mathbb C\right)$ satisfies \eqref{GrindEQ__1_3_}
      for any $0\le k\le \left\lceil s\right\rceil $ and $u\in \mathbb C$. Furthermore,
      \begin{equation} \nonumber
      \left|f^{\left(\left\lceil s\right\rceil\right)} (u)-f^{\left(\left\lceil s\right\rceil \right)} (v)\right|\lesssim \left(|u|+|v|\right)^{\sigma -\left\lceil s\right\rceil}|u-v| ,
      \end{equation}
     for any $u,\;v\in \mathbb C$.
\end{itemize}
 Suppose also that
\begin{equation} \nonumber
\frac{1}{p} =\sigma \left(\frac{1}{r} -\frac{s}{n} \right)+\frac{1}{r}+\frac{b}{n}, ~\frac{1}{r} -\frac{s}{n} >0.
\end{equation}
Then we have
\begin{equation}\label{GrindEQ__4_43_}
\left\| |x|^{-b}f(u)\right\| _{\dot{H}_{p,2}^{s} }\lesssim\left(\left\| u\right\| _{\dot{H}_{r,2}^{s} }^{\sigma } +\left\| v\right\| _{\dot{H}_{r,2}^{s} }^{\sigma } \right)\left\| u-v\right\| _{\dot{H}_{r,2}^{s} } .
\end{equation}
\end{lemma}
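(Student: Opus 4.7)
The plan is to mimic the proof of Lemma \ref{lem 3.3.} verbatim, but with $f(u)$ replaced by the difference $f(u)-f(v)$, and with the role of Lemma \ref{lem 3.1.} played by Lemmas \ref{lem 4.1.}, \ref{lem 4.2.} and \ref{lem 4.3.}. (Note that the left-hand side of the conclusion should read $\||x|^{-b}(f(u)-f(v))\|_{\dot{H}^s_{p,2}}$; this is the only shape of the estimate compatible with the right-hand side $(\|u\|^\sigma+\|v\|^\sigma)\|u-v\|$.)

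First I would reduce to the flat case by invoking Lemma \ref{lem 2.9.}: writing $v=s-[s]$, it suffices to bound $\|D^\alpha(|x|^{-b}(f(u)-f(v)))\|_{\dot H^{v}_{p,2}}$ for each multi-index with $|\alpha|=[s]$. Next I apply the Leibniz rule and the fractional product rule of Lemma \ref{lem 2.10.} to split the potential from the nonlinearity, obtaining, exactly as in \eqref{GrindEQ__3_22_},
\begin{equation}\nonumber
\big\||x|^{-b}(f(u)-f(v))\big\|_{\dot H^{s}_{p,2}}
\lesssim\sum_{k=0}^{[s]}\Bigl(\bigl\||x|^{-b}\bigr\|_{\dot H^{k+v}_{p_k,\infty}}\bigl\|f(u)-f(v)\bigr\|_{\dot H^{[s]-k}_{r_k,2}}
+\bigl\||x|^{-b}\bigr\|_{\dot H^{k}_{\bar p_k,\infty}}\bigl\|f(u)-f(v)\bigr\|_{\dot H^{s-k}_{\bar r_k,2}}\Bigr),
\end{equation}
with the same choices of $p_k,\bar p_k,r_k,\bar r_k$ as in \eqref{GrindEQ__3_23_}. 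The potential factors are finite by Remark \ref{rem 3.2.} combined with Lemma \ref{lem 2.1.}, as in \eqref{GrindEQ__3_27_}.

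For the nonlinear factors I introduce the auxiliary exponent $\bar p$ defined by $\tfrac1{\bar p}=\sigma(\tfrac1r-\tfrac{s}{n})+\tfrac1r$ and note, exactly as in \eqref{GrindEQ__3_25_}--\eqref{GrindEQ__3_26_}, the Sobolev--Lorentz embeddings $\dot H^{s}_{\bar p,2}\subset \dot H^{[s]-k}_{r_k,2}$ and $\dot H^{s}_{\bar p,2}\subset \dot H^{s-k}_{\bar r_k,2}$ furnished by Lemma \ref{lem 2.8.}. It then suffices to control $\|f(u)-f(v)\|_{\dot H^{s}_{\bar p,2}}$, for which I invoke Lemma \ref{lem 4.1.} under the assumption $0<s<1$, $\sigma>1$, Lemma \ref{lem 4.2.} under the assumption $s\ge 1$, $\sigma\ge\lceil s\rceil$ with the Lipschitz-type bound on $f^{(\lceil s\rceil)}$, and Lemma \ref{lem 4.3.} in the polynomial case; in each situation the hypothesis \eqref{GrindEQ__4_2_} is precisely the condition $\tfrac1{\bar p}=\sigma(\tfrac1r-\tfrac{s}{n})+\tfrac1r$ with $\tfrac1r>\tfrac{s}{n}$, which holds by assumption. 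Combining these gives
\begin{equation}\nonumber
\bigl\|f(u)-f(v)\bigr\|_{\dot H^{s}_{\bar p,2}}\lesssim\bigl(\|u\|_{\dot H^{s}_{r,2}}^{\sigma}+\|v\|_{\dot H^{s}_{r,2}}^{\sigma}\bigr)\|u-v\|_{\dot H^{s}_{r,2}},
\end{equation}
and assembling the pieces yields the lemma.

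The only genuinely delicate point is matching the three different hypotheses on $f$ to the three ranges of $(s,\sigma)$ so that some version of Lemmas \ref{lem 4.1.}--\ref{lem 4.3.} actually applies; once that bookkeeping is done, the proof is a literal transcription of the derivation of \eqref{GrindEQ__3_20_}. I expect no essentially new estimate to be required beyond those already established in Section \ref{lem 4.1.}.
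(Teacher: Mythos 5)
Your proposal is correct and follows exactly the route the paper itself indicates (the paper omits the detailed proof, stating only that Lemma \ref{lem 4.4.} follows from Lemmas \ref{lem 4.1.}--\ref{lem 4.3.} combined with the argument of Lemma \ref{lem 3.3.}, which is precisely what you carry out). You are also right that the left-hand side of \eqref{GrindEQ__4_43_} should read $\left\||x|^{-b}(f(u)-f(v))\right\|_{\dot{H}_{p,2}^{s}}$; that is a typo in the statement.
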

\begin{proof}[{\bf Proof of Theorem \ref{thm 1.10.}}]
Since $u$, $u_{m} $ satisfy the following integral equations:
\[u(t)=S(t)u_{0}-i\lambda\int_{0}^{t}S(t-\tau)|x|^{-b}f\left(u(\tau)\right)d\tau  ,\]
\[u_{m} (t)=S(t)u_{0}^{m} -i\lambda \int _{0}^{t}S(t-\tau) |x|^{-b} f\left(u_{m} (\tau)\right)d\tau  ,\]
respectively, we have
\begin{equation} \label{GrindEQ__4_44_}
u_{m} (t)-u(t)=S(t) \left(u_{0}^{m} -u_{0} \right)-i\lambda \int _{0}^{t}S(t-\tau) |x|^{-b} \left(f\left(u_{m} (\tau)\right)-f\left(u(\tau)\right)\right)d\tau.
\end{equation}
By Lemma \ref{lem 2.13.} (Strichartz estimates), for any admissible pair $(\gamma(p),p)$, we have
\begin{equation}\label{GrindEQ__4_45_}
\left\|S(t) u_{0}^{m}\right\|_{L^{\gamma(p)}([-T,\;T],\; H_{p,2}^{s})}\le \left\|S(t) u_{0}\right\|_{L^{\gamma(p)}([-T,\;T],\; H_{p,2}^{s})}+C\left\|u_{0}^{m}-u_{0}\right\|_{H^{s}}.
\end{equation}
Hence, if we take $T>0$ sufficiently small such that $\left\|S(t)u_{0}\right\|_{L^{\gamma(p)}([-T,\;T],\; H_{p,2}^{s})}<\frac{M}{4}$, then we also have
$$
\left\|S(t)u_{0}^{m}\right\|_{L^{\gamma(p)}([-T,\;T],\; H_{p,2}^{s})}<\frac{M}{2},
$$
for sufficiently large $m$.
By using the argument in the proof of Theorem \ref{thm 1.7.}, we can construct solutions $u$ and $u_{m}$ ($m$ sufficiently large) in the set $(D,d)$ given in Theorem \ref{thm 1.7.}, which implies that $T<T_{\max}(u_{0}),\;T_{\min}(u_{0})$ and $T<T_{\max}(u_{0}^{m}),\;T_{\min}(u_{0}^{m})$ for $m$ large enough. Furthermore, by Lemma \ref{lem 2.13.} (Strichartz estimates), we can see that given any admissible pair $(\gamma(p),p)$, we have
$$
\max\left\{\left\|u\right\|_{L^{\gamma(p)}([-T,\;T],\; H_{p,2}^{s})},\;\left\|u_{m}\right\|_{L^{\gamma(p)}([-T,\;T],\; H_{p,2}^{s})}\right\}\le CM,
$$
for all sufficiently large $m$. It follows from \eqref{GrindEQ__4_44_} and Lemma \ref{lem 2.13.} (Strichartz estimates) that
\begin{equation} \label{GrindEQ__4_46_}
\left\| u_{m} -u\right\| _{L^{\gamma(p)}([-T,\;T],\; H_{p,2}^{s})} \lesssim\left\| \phi _{m} -\phi \right\| _{H^{s} } +\left\| |x|^{-b} f\left(u_{m} \right)-|x|^{-b} f(u)\right\| _{L^{\gamma(\bar{r})'}([-T,\;T],\; H_{\bar{r}',2}^{s})},
\end{equation}
where we choose $\bar{r}$ as in the proof of Theorem \ref{thm 1.7.}.
Using \eqref{GrindEQ__3_34_}, \eqref{GrindEQ__4_46_}, Lemma \ref{lem 4.4.} and the same argument as in the proof of Theorem \ref{thm 1.7.}, we have
\begin{eqnarray}\begin{split} \label{GrindEQ__4_47_}
&\left\| u_{m} -u\right\| _{L^{\gamma(r)}([-T,\;T],\; H_{r,2}^{s})} \\
&~~~\le C\left\|u_{0}^{m}-u_{0}\right\|_{H^{s}}+2CM^{\sigma}\left(\left\| u_{m}\right\| _{L^{\gamma (r)} (I,\;H_{r}^{s} )}^{\sigma } +\left\| u\right\| _{L^{\gamma (r)} (I,\;H_{r}^{s} )}^{\sigma } \right)\left\| u_{m} -u\right\| _{L^{\gamma(r)}([-T,\;T],\; H_{r,2}^{s})}\\
&~~~\le C\left\|u_{0}^{m}-u_{0}\right\|_{H^{s}}+\frac{1}{2}\left\| u_{m} -u\right\| _{L^{\gamma(r)}([-T,\;T],\; H_{r,2}^{s})},
\end{split}\end{eqnarray}
which implies that as $m\to \infty $,
\[\left\| u_{m} -u\right\| _{L^{\gamma(r)}([-T,\;T],\; H_{r,2}^{s})} \le C\left\| u_{0}^{m} -u_{0} \right\| _{H^{s} } \to 0.\]
By Lemma \ref{lem 2.13.} (Strichartz estimates), we conclude that for all admissible pair $(\gamma(p),p)$,  we have
$$
\left\| u_{m} -u\right\| _{L^{\gamma(p)}([-T,\;T],\; H_{p,2}^{s})}\to 0,~\textrm{as}~m\to 0,
$$ and the continuous dependence is locally Lipschitz.
Using a standard compact argument (see e.g. Subsection 3.2.3 of \cite{DYC13}), we can get the desired result.
\end{proof}
\section{Blow-up}
In this section, we prove Theorem \ref{thm 1.13.}.
First, we recall the virial estimates related to the focusing INLS equation \eqref{GrindEQ__1_14_}. Given a real valued function $a$, we define the virial potential by
\begin{equation}\label{GrindEQ__5_1_}
V_{a}(t):=\int_{\mathbb R^{n}} a(x)\left|u(t,x)\right|^{2}dx.
\end{equation}
\begin{lemma}[Standard virial identity, \cite{D18}]\label{lem 5.1.}
Let $u_{0}\in H^{1}$ be such that $|x|u_{0}\in L^2$ and $u:I\times\mathbb R^{n}\to \mathbb C$ the corresponding solution to the focusing INLS equation \eqref{GrindEQ__1_14_}. Then, $|x|u\in C\left(I,\;L^2\right)$. Moreover, for any $t\in I$,
\begin{equation}\label{GrindEQ__5_2_}
\frac{d^2}{dt^2}\left\|xu(t)\right\|_{L^{2}}^{2}=8 \left\|u(t)\right\| _{\dot{H}^{1}}^{2} -\frac{4(n\sigma+2b)
}{\sigma+2} \int{|x|^{-b} \left|u(t,x)\right|^{\sigma+2}dx.}
\end{equation}
\end{lemma}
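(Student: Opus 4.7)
The plan is to prove the statement in two parts: first propagate the weighted norm $|x|u \in C(I, L^2)$, then derive identity (5.2) by twice differentiating $V_a$ under the integral sign. Since the solution is only $H^1$-regular, neither step can be done directly; both require a regularization.

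For the propagation part, I would approximate $a(x)=|x|^2$ by a family of smooth, globally bounded weights $a_R(x)$ that coincide with $|x|^2$ on $\{|x|\le R\}$ and satisfy the pointwise bounds $|\nabla a_R|^2 \lesssim a_R$ and $|D^\alpha a_R|\lesssim 1$ for $|\alpha|\ge 2$, uniformly in $R$. For such a bounded weight, $V_{a_R}(t)=\int a_R|u(t)|^2\,dx$ is finite and, multiplying the equation by $a_R\bar u$ and integrating by parts, $V_{a_R}'(t)=2\,\mathrm{Im}\int \bar u\,\nabla a_R\cdot\nabla u\,dx$. Cauchy--Schwarz together with $|\nabla a_R|\lesssim a_R^{1/2}$ gives $|V_{a_R}'(t)|\lesssim V_{a_R}(t)^{1/2}\|\nabla u(t)\|_{L^2}$, and a Gronwall-type integration, combined with $V_{a_R}(0)\le \||x|u_0\|_{L^2}^2$, produces a uniform-in-$R$ bound on $V_{a_R}(t)$. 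Sending $R\to\infty$ (Fatou) yields $|x|u\in L^\infty_{\rm loc}(I,L^2)$, and continuity is obtained by the same estimate applied to $V_{a_R}(t)-V_{a_R}(s)$.

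For the identity itself, with $a_R$ as above, a direct computation (differentiating twice under the integral sign, using the equation $iu_t+\Delta u+|x|^{-b}|u|^\sigma u=0$, and integrating by parts repeatedly) gives the general virial formula
$$V_{a_R}''(t)=-\int \Delta^2 a_R\,|u|^2\,dx + 4\int \partial_{jk}^2 a_R\,\mathrm{Re}(\partial_j u\,\overline{\partial_k u})\,dx - \tfrac{2\sigma}{\sigma+2}\int (\Delta a_R)|x|^{-b}|u|^{\sigma+2}\,dx + \tfrac{4}{\sigma+2}\int \nabla a_R\cdot\nabla(|x|^{-b})\,|u|^{\sigma+2}\,dx.$$
Specializing formally to $a(x)=|x|^2$ yields $\partial_{jk}^2 a=2\delta_{jk}$, $\Delta a=2n$, $\Delta^2 a=0$, and $\nabla a\cdot\nabla(|x|^{-b})=-2b|x|^{-b}$, so the kinetic term becomes $8\|\nabla u\|_{L^2}^2$ while the two nonlinear pieces combine into $-\tfrac{4(n\sigma+2b)}{\sigma+2}\int |x|^{-b}|u|^{\sigma+2}\,dx$, which is exactly (5.2).

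The main obstacle is the rigorous justification of these manipulations at $H^1$ regularity. My plan is to first approximate $u_0$ by $H^2$ data $u_{0,\varepsilon}$ with $|x|u_{0,\varepsilon}\to |x|u_0$ in $L^2$, carry out the computation for the corresponding classical solutions, and use well-posedness in $H^1$ together with the propagation result of the first step to pass to the limit $\varepsilon\to 0$. The subsequent limit $R\to\infty$ is then handled by dominated convergence: the kinetic integrand is dominated by $C|\nabla u|^2\in L^1$, and the nonlinear integrands are dominated by a multiple of $|x|^{-b}|u|^{\sigma+2}$, which is integrable via the Hardy--Sobolev inequality applied to $u(t)\in H^1$. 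This follows the standard pattern for virial identities for the INLS equation (cf.\ Cazenave's book and the references cited in the excerpt).
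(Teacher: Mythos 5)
Your proposal is correct in substance: the paper itself gives no proof of Lemma \ref{lem 5.1.} but simply recalls it from \cite{D18}, and your outline (truncating the weight $|x|^2$, propagating the weighted $L^2$ norm via the first-derivative identity and a Gronwall argument, deriving the general virial formula for regularized $H^2$ data, and specializing $\partial^2_{jk}a=2\delta_{jk}$, $\Delta a = 2n$, $\Delta^2 a=0$, $\nabla a\cdot\nabla(|x|^{-b})=-2b|x|^{-b}$ to obtain the coefficient $-\tfrac{4(n\sigma+2b)}{\sigma+2}$) is exactly the standard argument carried out in that reference and in Cazenave's book. The formal coefficients check out, so there is nothing to add beyond the routine verifications you already flag.
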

In order to recall the localized virial estimates, we introduce a function $\theta:[0,\infty)\to [0,\infty)$ satisfying
\begin{equation}\label{GrindEQ__5_3_}
\theta (r)=\left\{\begin{array}{l} {r^2,~\textrm{if}\;0\le r\le 1,}
\\ {\textrm{0},~\textrm{if}\;r\ge 2,} \end{array}\right.
\textrm{and}~~\theta''(r)\le 2 ~~\textrm{for}~~r\ge 0.
\end{equation}
For $R>1$, we define the radial function on $\mathbb R^{n}$:
\begin{equation}\label{GrindEQ__5_4_}
\varphi_{R}(x)=\varphi_{R}(r):=R^{2}\theta(r/R),\;r=|x|.
\end{equation}
The authors in \cite{DK21} proved the following localized virial estimate for $\sigma_{0}<\sigma<\sigma_{1}$, but the the proof and so the result are still valid for $0<\sigma\le \sigma_{1}$.
\begin{lemma}[Localized Virial Estimate, \cite{DK21}]\label{lem 5.2.}
Let $n \ge 3$, $0<b<2$, $R>1$, $0<\sigma\le \sigma_{1}$ and $\varphi_{R}$ be as in \eqref{GrindEQ__5_4_}. Let $u:I\times\mathbb R^{n}\to \mathbb C$ be a solution to the focusing INLS equation \eqref{GrindEQ__1_14_}. Then for any $t\in I$,
\begin{equation} \label{GrindEQ__5_5_}
\frac{d^2}{dt^2}V_{\varphi_{R}}(t)\le 8 \left\|u(t)\right\| _{\dot{H}^{1}}^{2} -\frac{4(n\sigma+2b)}{\sigma+2} \int{|x|^{-b} \left|u(t,x)\right|^{\sigma +2}dx}+CR^{-2}+CR^{-b}\left\|u(t)\right\|_{H^{1}}^{\sigma+2}.
\end{equation}
\end{lemma}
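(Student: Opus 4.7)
The plan is to derive a general-cutoff virial identity and then specialize to $\varphi_R$, splitting each integral into the ``exact'' ball $\{|x|\le R\}$, where $\varphi_R$ coincides with $|x|^2$, and the ``transition'' annulus $\{R\le|x|\le 2R\}$, where $\varphi_R$ tapers to zero. Starting from $iu_t=-\Delta u-|x|^{-b}|u|^\sigma u$, differentiating $V_\varphi(t)=\int \varphi |u|^2\,dx$ twice in time and integrating by parts (the same computation that proves Lemma~\ref{lem 5.1.}, but carried out for a general smooth radial $\varphi$) yields
\begin{equation*}
V_\varphi''(t)=-\int\Delta^2\varphi\,|u|^2\,dx+4\int\partial_j\partial_k\varphi\,\operatorname{Re}(\partial_j u\,\overline{\partial_k u})\,dx-\frac{2\sigma}{\sigma+2}\int\Delta\varphi\,|x|^{-b}|u|^{\sigma+2}\,dx+\frac{4}{\sigma+2}\int\nabla\varphi\cdot\nabla(|x|^{-b})\,|u|^{\sigma+2}\,dx.
\end{equation*}
Setting $\varphi=|x|^2$ collapses the right-hand side to the expression in \eqref{GrindEQ__5_2_}, which will serve as a sign check for the four terms.

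Specializing to $\varphi=\varphi_R$, I would handle the kinetic term globally rather than by splitting. Writing $\varphi_R$ in spherical coordinates, its Hessian has eigenvalues $\varphi_R''(r)=\theta''(r/R)$ (radial) and $\varphi_R'(r)/r$ (tangential, with multiplicity $n-1$). The assumption $\theta''\le 2$, together with $\theta'(1)=2$ inherited from the inner piece $\theta(s)=s^2$, gives $\theta'(s)\le 2s$ for all $s\ge 0$ by integration, hence $\varphi_R'(r)/r\le 2$. Consequently $\partial_j\partial_k\varphi_R\le 2\delta_{jk}$ as matrices on all of $\mathbb R^n$, and the second term in the identity is bounded above by $8\|u(t)\|_{\dot H^1}^2$ with no error.

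For the remaining three contributions, the ball $\{|x|\le R\}$ reproduces exactly the terms from \eqref{GrindEQ__5_2_} restricted to that ball, giving $-\frac{4(n\sigma+2b)}{\sigma+2}\int_{|x|\le R}|x|^{-b}|u|^{\sigma+2}\,dx$. The annular remainders are estimated as follows. Since $\Delta^2\varphi_R$ is supported on $\{R\le|x|\le 2R\}$ with $|\Delta^2\varphi_R|\lesssim R^{-2}$ by scaling, the biharmonic term contributes at most $C R^{-2}\|u\|_{L^2}^2=CR^{-2}$ by mass conservation. For the two nonlinear annular remainders, the bounds $|\Delta\varphi_R|\lesssim 1$ and $|\nabla\varphi_R\cdot\nabla(|x|^{-b})|\lesssim |x|^{-b}$ (using $\varphi_R'(r)\le 2r$) reduce both to a multiple of $\int_{|x|\ge R}|x|^{-b}|u|^{\sigma+2}\,dx\le R^{-b}\|u\|_{L^{\sigma+2}}^{\sigma+2}$. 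Since $\sigma+2\le\sigma_1+2=\frac{2n-2b}{n-2}\le\frac{2n}{n-2}$, Sobolev embedding yields $\|u\|_{L^{\sigma+2}}^{\sigma+2}\lesssim\|u\|_{H^1}^{\sigma+2}$, producing the $CR^{-b}\|u\|_{H^1}^{\sigma+2}$ term. To replace $\int_{|x|\le R}|x|^{-b}|u|^{\sigma+2}\,dx$ by $\int_{\mathbb R^n}|x|^{-b}|u|^{\sigma+2}\,dx$ in the final inequality, I add and subtract the exterior piece; its size is again bounded by the same $R^{-b}\|u\|_{H^1}^{\sigma+2}$ estimate.

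The main delicate point is the matrix inequality $\partial_j\partial_k\varphi_R\le 2\delta_{jk}$ on the transition annulus: the radial eigenvalue is immediate from $\theta''\le 2$, but the tangential eigenvalue $\varphi_R'(r)/r$ has to be controlled by integrating $\theta''\le 2$ starting from the boundary value $\theta'(1)=2$ supplied by the inner region. Without this step, the kinetic part would only be controlled on $\{|x|\le R\}$ and one would pick up an unwanted $\int_{|x|>R}|\nabla u|^2\,dx$ term that is not present in \eqref{GrindEQ__5_5_}; the rest of the argument is routine splitting plus Sobolev embedding and mass conservation.
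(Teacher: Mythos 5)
Your proof is correct. Note that the paper itself gives no proof of this lemma: it simply cites \cite{DK21} and remarks that the argument there (stated for $\sigma_{0}<\sigma<\sigma_{1}$) extends to $0<\sigma\le\sigma_{1}$. Your argument is precisely that standard localized-virial computation: the general-cutoff identity specializes correctly to \eqref{GrindEQ__5_2_} (a valid sanity check on all signs and coefficients), the global kinetic bound via the matrix inequality $\nabla^{2}\varphi_{R}\le 2\,\mathrm{Id}$ is handled properly — including the genuinely delicate tangential eigenvalue $\varphi_{R}'(r)/r\le 2$, obtained by integrating $\theta''\le 2$ from $\theta'(1)=2$, and the fact that $\mathrm{Re}(\partial_{j}u\,\overline{\partial_{k}u})$ is positive semidefinite so that only an upper bound on the Hessian is needed — and the annular remainders are correctly reduced to $CR^{-2}$ by mass conservation and to $CR^{-b}\|u\|_{H^{1}}^{\sigma+2}$ via $\sigma+2\le\frac{2n-2b}{n-2}\le\frac{2n}{n-2}$ and Sobolev embedding. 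Nothing is missing; this supplies a self-contained proof where the paper delegates to the reference.
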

Next, we recall the sharp Hardy-Sobolev embedding inequality associated with the focusing energy-critical INLS equation \eqref{GrindEQ__1_14_}. For example, see \cite{KP04} or Theorem 15.2.2 in \cite{GM13}.
\begin{lemma}[Sharp Hardy-Sobolev embedding inequality]\label{lem 5.3.}
Let $n\ge 3$, $0< b< 2$ and $\sigma_{1}=\frac{4-2b}{n-2}$. Then we have
\begin{equation}\label{GrindEQ__5_6_}
\left(\int_{\mathbb R^{n}}{|x|^{-b}\left|f\right|^{\sigma_{1}+2}dx}\right)^{\frac{1}{\sigma_{1}+2}}
\le C_{HS} \left\|f\right\|_{\dot{H}^{1}},
\end{equation}
for all $f\in \dot{H}^{1}$, where the sharp Hardy-Sobolev constant $C_{HS}$ defined by
\begin{equation}\label{GrindEQ__5_7_}
C_{HS}(b,c)=\inf_{f\in \dot{H}^{1}\setminus\left\{0\right\}}
{\frac{\left\|f\right\|_{\dot{H}^{1}}}
{\left(\int{|x|^{-b}\left|f\right|^{\sigma_{1}+2}dx}\right)
^{\frac{1}{\sigma_{1}+2}}}}.
\end{equation}
is attained by function $W_{b}(x)$ given in \eqref{GrindEQ__1_16_}.
\end{lemma}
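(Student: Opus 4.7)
The plan is to combine a variational/sub-threshold trapping argument built from the sharp Hardy--Sobolev inequality of Lemma~\ref{lem 5.3.} with a (possibly localized) virial concavity argument based on Lemmas~\ref{lem 5.1.}--\ref{lem 5.2.}. The key algebraic observation is that at the energy-critical exponent $\sigma=(4-2b)/(n-2)$ one has $\frac{4(n\sigma+2b)}{\sigma+2}=8$, so the standard virial identity reads $\frac{d^2}{dt^2}\|xu\|_{L^2}^2 = 8K(u(t))$ with $K(u):=\|u\|_{\dot H^1}^2 - \int |x|^{-b}|u|^{\sigma+2}dx$, and Lemma~\ref{lem 5.2.} becomes $\frac{d^2}{dt^2}V_{\varphi_R}(t)\le 8K(u(t)) + CR^{-2}+CR^{-b}\|u\|_{H^1}^{\sigma+2}$. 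Conservation of energy yields the identity $K(u(t))=(\sigma+2)E(u_0)-\frac{\sigma}{2}\|u(t)\|_{\dot H^1}^2$, converting any uniform lower bound on $\|u(t)\|_{\dot H^1}$ into a uniform upper bound on $K(u(t))$.

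The first step is to produce $c=c(u_0)>0$ with $K(u(t))\le -c$ throughout the maximal interval. Using Lemma~\ref{lem 5.3.} together with the Pohozaev/Euler--Lagrange identity $\|W_b\|_{\dot H^1}^2=\int|x|^{-b}|W_b|^{\sigma+2}$ (which gives $E(W_b)=\frac{\sigma}{2(\sigma+2)}\|W_b\|_{\dot H^1}^2$), energy conservation produces the bound $E(u_0)\ge h(\|u(t)\|_{\dot H^1})$ for a one-variable function $h$ whose unique positive maximum equals $E(W_b)$ and is attained at $y=\|W_b\|_{\dot H^1}$. The sub-threshold hypotheses $E(u_0)<E(W_b)$ and $\|u_0\|_{\dot H^1}>\|W_b\|_{\dot H^1}$ then trap $\|u(t)\|_{\dot H^1}$ strictly above $\|W_b\|_{\dot H^1}$, yielding $\|u(t)\|_{\dot H^1}^2\ge(1+\delta)\|W_b\|_{\dot H^1}^2$ for some $\delta>0$, which via the $K$-identity upgrades to $K(u(t))\le -c$. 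The case $E(u_0)<0$ is immediate: $K(u(t))\le(\sigma+2)E(u_0)<0$.

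For the dichotomy, assume for contradiction that $T^*=\infty$ and $\sup_{t\ge 0}\|u(t)\|_{H^1}=:M<\infty$. Choose $R=R(M,c)$ so large that $CR^{-2}+CR^{-b}M^{\sigma+2}\le 4c$; Lemma~\ref{lem 5.2.} then gives $\frac{d^2}{dt^2}V_{\varphi_R}(t)\le -4c$, and two integrations yield $V_{\varphi_R}(t)\le V_{\varphi_R}(0)+V_{\varphi_R}'(0)t-2ct^2\to-\infty$, contradicting $V_{\varphi_R}\ge 0$. Hence either $T^*<\infty$ or $\|u(t_n)\|_{H^1}\to\infty$ along some sequence, and mass conservation transfers this to $\|u(t_n)\|_{\dot H^1}\to\infty$; the backward case is identical.

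Finally, the three additional hypotheses rule out grow-up. In the finite-variance case, Lemma~\ref{lem 5.1.} gives $\frac{d^2}{dt^2}\|xu\|_{L^2}^2\le -8c$ directly, forcing $\|xu(t)\|_{L^2}^2<0$ in finite time and hence $T_*,T^*<\infty$. The main obstacle is the radial and cylindrical cases, where Lemma~\ref{lem 5.2.} alone is insufficient because its remainder is controlled only by the full $H^1$-norm. The remedy is a symmetry-adapted sharpening of Lemma~\ref{lem 5.2.}: for radial data, a Strauss-type decay estimate on $\{|x|\ge R\}$ replaces the error by $R^{-\alpha}\|u\|_{\dot H^1}^{\theta}$ with $\theta<2$, and a Young absorption into a fraction of the $\frac{\sigma}{2}\|u\|_{\dot H^1}^2$ coming from $-K(u)$ yields $\frac{d^2}{dt^2}V_{\varphi_R}(t)\le -c'<0$ uniformly in $t$, after which the quadratic-negativity argument again forces finite-time blow-up. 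The cylindrical case decomposes $x=(y,x_n)$, uses radial decay in $y\in\mathbb R^{n-1}$ together with the weight $x_n$ (which is $L^2$-integrable against $u$ by $u_0\in\Sigma_n$), and the requirement $b\ge 4-n$ is precisely what preserves $\theta<2$ so that the Young absorption goes through.
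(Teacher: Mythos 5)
Your proposal does not prove the statement it is attached to. The statement is Lemma \ref{lem 5.3.} itself: the Hardy--Sobolev inequality \eqref{GrindEQ__5_6_}, the fact that the infimum defining $C_{HS}$ in \eqref{GrindEQ__5_7_} is attained, and the identification of the extremizer as the explicit function $W_{b}$ of \eqref{GrindEQ__1_16_}. What you have written is instead an outline of the proof of the blow-up result, Theorem \ref{thm 1.13.}, and it explicitly \emph{uses} Lemma \ref{lem 5.3.} as an ingredient (``a variational/sub-threshold trapping argument built from the sharp Hardy--Sobolev inequality of Lemma \ref{lem 5.3.}''). Nothing in your text establishes the inequality, the existence of a minimizer, or that $W_{b}$ realizes the sharp constant; the argument is therefore circular with respect to the assigned statement, and the entire content about virial identities, localized remainders, radial Strauss decay, and cylindrical symmetry is irrelevant to it.

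For the record, the paper does not give an internal proof either: it recalls Lemma \ref{lem 5.3.} as a known result, citing Kang--Peng \cite{KP04} and Theorem 15.2.2 of \cite{GM13}, and then only records the consequences \eqref{GrindEQ__5_8_}--\eqref{GrindEQ__5_10_} from the Euler--Lagrange equation $\Delta W_{b}+|x|^{-b}|W_{b}|^{\sigma_{1}}W_{b}=0$. A genuine proof would have to (i) derive the inequality \eqref{GrindEQ__5_6_} for all $f\in\dot H^{1}$, e.g.\ as a Caffarelli--Kohn--Nirenberg/Hardy--Sobolev interpolation (note the exponent $\sigma_{1}+2=\frac{2(n-b)}{n-2}$ is exactly the critical one for the weight $|x|^{-b}$), (ii) show the infimum in \eqref{GrindEQ__5_7_} is attained, typically by concentration--compactness or by symmetrization plus a direct method in the radial class, and (iii) verify by solving the resulting ODE that the minimizer is, up to the scaling parameter $\varepsilon$, the profile $W_{b}$ in \eqref{GrindEQ__1_16_}. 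If your intent was to prove Theorem \ref{thm 1.13.}, your outline is broadly consistent with the paper's Section 5 argument, but that is not the statement under review.
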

Lemma 2.2 in \cite{KP04} also shows that $W_{b}$ solves the equation
\begin{equation}\nonumber
\Delta W_{b}+|x|^{-b}\left|W_{b}\right|^{\sigma_{1}}W_{b}=0,
\end{equation}
and satisfies
\begin{equation}\label{GrindEQ__5_8_}
\left\|W_{b}\right\|_{\dot{H}^{1}}^{2}=\int{|x|^{-b} W_{b}^{\sigma_{1}+2}dx}.
\end{equation}
Hence, we have
\begin{equation}\label{GrindEQ__5_9_}
\left\|W_{b}\right\|_{\dot{H}^{1}}^{2}=\int{|x|^{-b} W_{b}^{\sigma_{1}+2}dx}=[C_{HS}]^{-\frac{2(n-b)}{2-b}},~\left\|W_{b}\right\|_{\dot{H}^{1}}^{\sigma_{1}}
=[C_{HS}]^{-(\sigma_{1}+2)},
\end{equation}
\begin{equation} \label{GrindEQ__5_10_}
E\left(W_{b}\right)=\frac{1}{2} \left\|
 W_{b}\right\|_{\dot{H}^{1}}^{2}-\frac{1}{\sigma_{1}+2} \int{|x|^{-b} \left|W_{b}\right|^{\sigma_{1}+2}dx}=\frac{2-b}{2(n-b)}[C_{HS}]^{-\frac{2(n-b)}{2-b}}.
\end{equation}

We are ready to prove Theorem \ref{thm 1.13.}.
\begin{proof}[{\bf Proof of Theorem \ref{thm 1.13.}}] We divide the study in two cases: $E(u_{0})<0$ and $E(u_{0})\ge0$

\textbf{Case 1.} We consider the case $E(u_{0})<0$.

Let us prove the first part. If $T_{*},~T^{*}<\infty$, then we are done. If $T^{*}=\infty$, then we prove that there exists $t_{n}\to \infty$ such that $\left\|u(t_{n})\right\|_{\dot{H}^{1}}\to \infty$ as $n\to \infty$. Assume by contradiction that it doesn't hold, i.e. $\sup_{t\in [0,\infty)}{\left\|u(t)\right\|_{\dot{H}^{1}}}\le M_{0}$ for some $M_{0}>0$. By the conservation of mass, we have
$$
\sup_{t\in [0,\infty)}{\left\|u(t)\right\|_{H^{1}}}\le M_{1},
$$
for some $M_{1}>0$. Using the localized virial estimate \eqref{GrindEQ__5_5_} and the conservation of energy, we have
\begin{eqnarray}\begin{split} \nonumber
\frac{d^2}{dt^2}V_{\varphi_{R}}(t)\le &8 \left\|u(t)\right\| _{\dot{H}^{1}}^{2} -\frac{4(n\sigma_{1}+2b)}{\sigma_{1}+2} \int{|x|^{-b} \left|u(t,x)\right|^{\sigma_{1} +2}dx}
+CR^{-2}+CR^{-b}\left\|u(t)\right\|_{H^{1}}^{\sigma_{1}+2}\\
=&4(n\sigma_{1}+2b)E\left(u_{0}\right)-2(n\sigma_{1}-4+2b)
\left\|u\right\|_{\dot{H}^{1}}^{2}+CR^{-2}+CR^{-b}M_{1}^{\sigma+2},
\end{split}\end{eqnarray}
for any $t\in [0,\infty)$. Since $n\sigma_{1}-4+2b>0$, we take $R>1$ large enough to have that
\begin{equation}\nonumber
\frac{d^2}{dt^2}V_{\varphi_{R}}(t)\le2(n\sigma_{1}+2b)E\left(u_{0}\right)<0,
\end{equation}
for any $t\in [0,\infty)$.
Integrating this estimate, there exists $t_{0}>0$ sufficiently large such that $V_{\varphi_{R}}(t_{0})<0$ which is impossible. Similarly, if $T_{*}=\infty$, then we can prove that there exists $t_{n}\to -\infty$ such that $\left\|u(t_{n})\right\|_{\dot{H}^{1}}\to \infty$ as $n\to \infty$. This completes the proof of the first part.

Next, we prove the second part.

$\cdot$ \textbf{Finite-variance data.} Let us assume in addition that $xu_{0}\in L^{2}$. Applying the standard virial identity \eqref{GrindEQ__5_2_} and the conservation of energy, we have for any $t\in (-T_{*},\;T^{*})$,
\begin{eqnarray}\begin{split}\nonumber
\frac{d^2}{dt^2}\left\|xu(t)\right\|_{L^{2}}^{2}&=8 \left\|u(t)\right\| _{\dot{H}^{1}}^{2} -\frac{4(n\sigma_{1}+2b)}{\sigma_{1}+2} \int{|x|^{-b} \left|u(t,x)\right|^{\sigma_{1}+2}dx}\\
&=4(n\sigma_{1}+2b)E\left(u(t)\right)-2(n\sigma_{1}-4+2b)\left\|u\right\|_{\dot{H}^{1}}^{2}<0,
\end{split}\end{eqnarray}
where we used the fact $n\sigma_{1}-4+2b>0$. By the classical argument of Glassey \cite{G77}, it follows that the solution $u$ blows up in finite time.

$\cdot$ \textbf{Radially symmetric data.} Let us assume in addition that $u_{0}$ is radially symmetric.
Using Lemma 3.4 of \cite{D18} and the conservation of energy, we have for any $\varepsilon>0$ and any $t$ in the existence time,
\begin{eqnarray}\begin{split} \nonumber
\frac{d^2}{dt^2}V_{\varphi_{R}}(t)\le & 8 \left\|u(t)\right\| _{\dot{H}^{1}}^{2} -\frac{4(n\sigma_{1}+2b)}{\sigma_{1}+2} \int{|x|^{-b} \left|u(t,x)\right|^{\sigma_{1}+2}dx}\\
&+O\left(R^{-2}+\varepsilon^{-\frac{\sigma_{1}}{4-\sigma_{1}}}
R^{-{\frac{2\left[(n-1)\sigma_{1}+2b\right]}
{4-\sigma_{1}}}}+\varepsilon\left\|u(t)\right\| _{\dot{H}^{1}}^{2}\right)\\
=&4(n\sigma_{1}+2b)E\left(u_{0}\right)-2(n\sigma_{1}-4+2b)
\left\|u\right\|_{\dot{H}^{1}}^{2}\\
&+O\left(R^{-2}+\varepsilon^{-\frac{\sigma_{1}}{4-\sigma_{1}}}
R^{-{\frac{2\left[(n-1)\sigma_{1}+2b\right]}
{4-\sigma_{1}}}}+\varepsilon\left\|u(t)\right\| _{\dot{H}^{1}}^{2}\right),
\end{split}\end{eqnarray}
Since $n\sigma_{1}-4+2b>0$, we take $\varepsilon>0$ small enough and $R>1$ large enough depending on $\varepsilon$ to have that
\begin{equation}\nonumber
\frac{d^2}{dt^2}V_{\varphi_{R}}(t)\le2(n\sigma_{1}+2b)E\left(u_{0}\right)<0,
\end{equation}
for any $t\in (-T_{*},\;T^{*})$. This shows that the solution $u$ must blow up in finite time.

$\cdot$ \textbf{Cylindrically symmetric data.} Let us assume in addition that $b\ge 4-n$ and $u_{0}\in \Sigma_{n}$. We define the radial function on $\mathbb R^{n-1}$:
\begin{equation}\label{GrindEQ__5_11_}
\psi_{R}(y)=\psi_{R}(r):=R^{2}\theta(r/R),\;r=|y|,
\end{equation}
where $R>1$ and $\theta:[0,\infty)\to [0,\infty)$ is given in \eqref{GrindEQ__5_3_}.
 We define the function on $\mathbb R^{n}$:
\begin{equation}\label{GrindEQ__5_12_}
\phi_{R}(x)=\phi_{R}(y,x_{n}):=\psi_{R}(y)+x_{n}^{2}.
\end{equation}
It follows from (4.12) of \cite{DK21} that
\begin{eqnarray}\begin{split}\nonumber
\frac{d^2}{dt^2}V_{\phi_{R}}(t)\le 4(n\sigma_{1}+2b)&E\left(u(t)\right)-2(n\sigma_{1}-4+2b)\left\|u\right\|_{\dot{H}^{1}}^{2}\\
&+CR^{-2}+\left\{\begin{array}{l} {CR^{-\frac{n-2}{2}-b}\left\|u(t)\right\|_{\dot{H}^1}^{2},~\textrm{if}\;\sigma_{1}=2,}
\\{CR^{-\frac{(n-2)\sigma_{1}}{4}-b}\left\|u(t)\right\|_{\dot{H}^1}^{2}
+CR^{-\frac{(n-2)\sigma_{1}}{4}-b},~\textrm{if}\;\sigma_{1}<2,} \end{array}\right.
\end{split}\end{eqnarray}
for any $t\in (-T_{*},\;T^{*})$. Note that $\sigma_{1}\le 2$ is equivalent to $b\ge 4-n$.
Since $n\sigma_{1}-4+2b>0$, we take $R>1$ large enough to have that
$$
\frac{d^2}{dt^2}V_{\phi_{R}}(t)\le2(n\sigma_{1}+2b)E\left(u_{0}\right)<0,
$$
for any $t\in (-T_{*},\;T^{*})$. This shows that the solution $u$ blows up in finite time.

\textbf{Case 2.} We consider the case $E(u_{0})\ge0$. Let us assume that $E(u_{0})<E(W_{b})$ and $\left\|u_{0}\right\|_{\dot{H}^{1}}>\left\|W_{b}\right\|_{\dot{H}^{1}}$.
By definition of the energy and the sharp Hardy-Sobolev inequality \eqref{GrindEQ__5_6_}, we have
\begin{eqnarray}\begin{split}\nonumber
E\left(u(t)\right)&=\frac{1}{2} \left\| u(t)\right\| _{\dot{H}^{1}}^{2} -\frac{1}{\sigma_{1}+2} \int{|x|^{-b} \left|u(t,x)\right|^{\sigma_{1}+2}dx}\\
&\ge \frac{1}{2} \left\| u(t)\right\| _{\dot{H}^{1}}^{2} -\frac{[C_{HS}]^{\sigma_{1}+2}}{\sigma_{1}+2}\left\| u(t)\right\| _{\dot{H}^{1}}^{\sigma_{1}+2}=:g\left(\left\| u(t)\right\| _{\dot{H}^{1}}\right),
\end{split}\end{eqnarray}
where
\begin{equation}\label{GrindEQ__5_13_}
g(y)=\frac{1}{2}y^2-\frac{[C_{HS}]^{\sigma_{1}+2}}{\sigma_{1}+2}y^{\sigma_{1}+2}.
\end{equation}
Lemma \ref{lem 5.3.} shows that
\[g\left(\left\|W_{b}\right\|_{\dot{H}^{1}}\right)=E(W_{b}).\]
By the conservation of energy and the assumption $E(u_{0})<E(W_{b})$, we can see that
\[g\left(\left\| u(t)\right\| _{\dot{H}^{1}}\right)\le E\left(u(t)\right)=E\left(u_{0}\right)<E(W_{b}).\]
By the assumption $\left\|u_{0}\right\|_{\dot{H}^{1}}>\left\|W_{b}\right\|_{\dot{H}^{1}}$ and the continuity argument, we have
\begin{equation}\label{GrindEQ__5_14_}
\left\|u(t)\right\|_{\dot{H}^{1}}>\left\|W_{b}\right\|_{\dot{H}^{1}},
\end{equation}
for any $t$ as long as the solution exists. We next improve \eqref{GrindEQ__5_14_} as follows. Pick $\delta>0$ small enough such that
\begin{equation}\label{GrindEQ__5_15_}
E(u_{0})\le(1-\delta)E(W_{b}),
\end{equation}
which implies that
\begin{equation}\label{GrindEQ__5_16_}
g\left(\left\| u(t)\right\| _{\dot{H}^{1}}\right)\le(1-\delta)E(W_{b}).
\end{equation}
Using \eqref{GrindEQ__5_9_}, \eqref{GrindEQ__5_10_} and \eqref{GrindEQ__5_16_}, we have
\begin{equation}\nonumber
\frac{n-b}{2-b}\left(\frac{\left\| u(t)\right\| _{\dot{H}^{1}}}{\left\| W_{b}\right\| _{\dot{H}^{1}}}\right)^{2}-\frac{n-2}{2-b}\left(\frac{\left\| u(t)\right\| _{\dot{H}^{1}}}{\left\| W_{b}\right\| _{\dot{H}^{1}}}\right)^{\sigma_{1}+2}\le 1-\delta.
\end{equation}
The continuity argument shows that there exits $\delta'>0$ depending on $\delta$ such that
\begin{equation}\label{GrindEQ__5_17_}
\frac{\left\| u(t)\right\| _{\dot{H}^{1}}}{\left\| W_{b}\right\| _{\dot{H}^{1}}}\ge1+\delta'.
\end{equation}
Then we can take $\varepsilon>0$ small enough such that
\begin{equation}\label{GrindEQ__5_18_}
8\left\| u(t)\right\| _{\dot{H}^{1}}^{2} -\frac{4(n\sigma_{1}+2b)}{\sigma_{1}+2}  \int{|x|^{-b} \left|u(t,x)\right|^{\sigma_{1}+2}dx}+\varepsilon\left\| u(t)\right\| _{\dot{H}^{1}}^{2}\le -c<0,
\end{equation}
for any $t$ in the existence time. Indeed, by the definition of energy, we have
\[\textrm{LHS\eqref{GrindEQ__5_18_}}=4(n\sigma_{1}+2b)E\left(u(t)\right)+(8+\varepsilon-2n\sigma_{1}-4b)
\left\| u(t)\right\| _{\dot{H}^{1}}^{2}.\]
Using the conservation of energy, \eqref{GrindEQ__5_9_}, \eqref{GrindEQ__5_10_}, \eqref{GrindEQ__5_15_} and \eqref{GrindEQ__5_17_}, we have
\begin{eqnarray}\begin{split}\nonumber
\textrm{LHS\eqref{GrindEQ__5_18_}}&\le4(1-\delta)(n\sigma_{1}+2b)E(W_{b})+(8+\varepsilon-2n\sigma_{1}-4b)
(1-\delta')^{2}\left\| W_{b}\right\| _{\dot{H}^{1}}^{2}\\
&=\left\|W_{b}\right\|_{\dot{H}^{1}}^{2}\left[\frac{8(2-b)}{n-2}
\left(1-\delta-(1+\delta')^{2}\right)+\varepsilon(1+\delta')^{2}\right].
\end{split}\end{eqnarray}
Hence, by taking $\varepsilon>0$ small enough, we can get \eqref{GrindEQ__5_18_}.

Let us prove the first part. Let us consider only positive times. The one for negative times is similar. If $T^{*}<\infty$, then we are done. If $T^{*}=\infty$, then we prove that there exists $t_{n}\to \infty$ such that $\left\|u(t_{n})\right\|_{\dot{H}^{1}}\to \infty$ as $n\to \infty$. Assume by contradiction that it doesn't hold. Then, as in Case 1, we have
$\sup_{t\in [0,\infty)}{\left\|u(t)\right\|_{H^{1}}}\le M_{1}$
for some $M_{1}>0$. Using the localized virial estimate \eqref{GrindEQ__5_5_} and the conservation of energy, we have
\begin{equation} \nonumber
\frac{d^2}{dt^2}V_{\varphi_{R}}(t)\le 8 \left\|u(t)\right\| _{\dot{H}^{1}}^{2} -\frac{4(n\sigma_{1}+2b)}{\sigma_{1}+2} \int{|x|^{-b} \left|u(t,x)\right|^{\sigma_{1} +2}dx}+CR^{-2}+CR^{-b}M_{1}^{\sigma_{1}+2},
\end{equation}
for any $t\in [0,\infty)$. Taking $R>1$ large enough, \eqref{GrindEQ__5_18_} implies
$$
\frac{d^2}{dt^2}V_{\varphi_{R}}(t)\le-c/2<0,
$$
for any $t\in [0,\infty)$, which is impossible. This completes the proof of the first part.

Next, we prove the second part.

$\cdot$ \textbf{Finite-variance data.}
Let us assume in addition that $xu_{0}\in L^{2}$. Using the standard virial identity \eqref{GrindEQ__5_2_} and \eqref{GrindEQ__5_18_}, we have
\begin{equation}\nonumber
\frac{d^2}{dt^2}\left\|xu(t)\right\|_{L^{2}}^{2}=8\left\| u(t)\right\| _{\dot{H}^{1}}^{2} -\frac{4(n\sigma_{1}+2b)}{\sigma_{1}+2} \int{|x|^{-b} \left|u(t,x)\right|^{\sigma_{1}+2}dx}\le -c<0,
\end{equation}
which implies that the solution blows up in finite time.

$\cdot$ \textbf{Radially symmetric data.} Let us assume in addition that $u_{0}$ is radially symmetric.
Using Lemma 3.4 of \cite{D18}, we have
\begin{eqnarray}\begin{split}\nonumber
\frac{d^2}{dt^2}V_{\varphi_{R}}(t)\le& 8\left\| u(t)\right\| _{\dot{H}^{1}}^{2} -\frac{4(n\sigma_{1}+2b)}{\sigma_{1}+2} \int{|x|^{-b} \left|u(t,x)\right|^{\sigma_{1}+2}dx}\\
&+O\left(R^{-2}+\epsilon^{-\frac{\sigma}{4-\sigma}}R^{-{\frac{2\left[(n-1)\sigma+2b\right]}
{4-\sigma}}}+\epsilon\left\|u(t)\right\| _{\dot{H}^{1}}^{2}\right),
\end{split}\end{eqnarray}
for any $\epsilon>0$ and any $t$ in the existence time. Taking $\epsilon>0$ small enough and $R>1$ large enough depending on $\epsilon$, it follows from \eqref{GrindEQ__5_18_} that
\[\frac{d^2}{dt^2}V_{\varphi_{R}}(t)\le-c/2<0.\]
This shows that the solution must blow up in finite time.

$\cdot$ \textbf{Cylindrically symmetric data.} Let us assume in addition that $b\ge 4-n$ and $u_{0}\in \Sigma_{n}$.
It follows from (4.12) of \cite{DK21} that
\begin{eqnarray}\begin{split}\nonumber
\frac{d^2}{dt^2}V_{\phi_{R}}(t)\le
8\left\| u(t)\right\| _{\dot{H}^{1}}^{2}&-\frac{4(n\sigma_{1}+2b)}{\sigma_{1}+2} \int{|x|^{-b} \left|u(t,x)\right|^{\sigma_{1}+2}dx}\\
&+CR^{-2}+\left\{\begin{array}{l} {CR^{-\frac{n-2}{2}-b}\left\|u(t)\right\|_{\dot{H}^1}^{2},~\textrm{if}\;\sigma_{1}=2,}
\\{CR^{-\frac{(n-2)\sigma_{1}}{4}-b}\left\|u(t)\right\|_{\dot{H}^1}^{2}
+CR^{-\frac{(n-2)\sigma_{1}}{4}-b},~\textrm{if}\;\sigma_{1}<2,} \end{array}\right.
\end{split}\end{eqnarray}
for any $t\in (-T_{*},\;T^{*})$. Taking $R>1$ large enough, \eqref{GrindEQ__5_18_} shows that
\[\frac{d^2}{dt^2}V_{\phi_{R}}(t)\le-c/2<0,\]
which implies that the solution blows up in finite time.
This completes the proof.
\end{proof}



\bigskip
E-mail address: cioc12@ryongnamsan.edu.kp (J.An); jm.kim0211@ryongnamsan.edu.kp (J. Kim)
\end{document}